\begin{document}
\numberwithin{equation}{section}

\def\1#1{\overline{#1}}
\def\2#1{\widetilde{#1}}
\def\3#1{\widehat{#1}}
\def\4#1{\mathbb{#1}}
\def\5#1{\frak{#1}}
\def\6#1{{\mathcal{#1}}}

\newcommand{\de}{\partial}
\newcommand{\R}{\mathbb R}
\newcommand{\al}{\alpha}
\newcommand{\tr}{\widetilde{\rho}}
\newcommand{\tz}{\widetilde{\zeta}}
\newcommand{\tv}{\widetilde{\varphi}}
\newcommand{\tO}{\widetilde{\Omega}}
\newcommand{\hv}{\hat{\varphi}}
\newcommand{\tu}{\tilde{u}}
\newcommand{\usc}{{\sf usc}}
\newcommand{\tF}{\tilde{F}}
\newcommand{\debar}{\overline{\de}}
\newcommand{\Z}{\mathbb Z}
\newcommand{\C}{\mathbb C}
\newcommand{\Po}{\mathbb P}
\newcommand{\zbar}{\overline{z}}
\newcommand{\G}{\mathcal{G}}
\newcommand{\So}{\mathcal{U}}
\newcommand{\Ko}{\mathcal{K}}
\newcommand{\U}{\mathcal{U}}
\newcommand{\B}{\mathbb B}
\newcommand{\oB}{\overline{\mathbb B}}
\newcommand{\Cur}{\mathcal D}
\newcommand{\Dis}{\mathcal Dis}
\newcommand{\Levi}{\mathcal L}
\newcommand{\SP}{\mathcal SP}
\newcommand{\Sp}{\mathcal Q}
\newcommand{\Ma}{\mathcal M}
\newcommand{\Co}{\mathcal C}
\newcommand{\Hol}{{\sf Hol}(\mathbb H, \mathbb C)}
\newcommand{\Aut}{{\sf Aut}(\mathbb D)}
\newcommand{\D}{\mathbb D}
\newcommand{\oD}{\overline{\mathbb D}}
\newcommand{\oX}{\overline{X}}
\newcommand{\loc}{L^1_{\rm{loc}}}
\newcommand{\loci}{L^\infty_{\rm{loc}}}
\newcommand{\la}{\langle}
\newcommand{\ra}{\rangle}
\newcommand{\thh}{\tilde{h}}
\newcommand{\N}{\mathbb N}
\newcommand{\kd}{\kappa_D}
\newcommand{\Hr}{\mathbb H}
\newcommand{\ps}{{\sf Psh}}
\newcommand{\tg}{\widetilde{\gamma}}

\newcommand{\hol}{{\sf hol}}
\newcommand{\subh}{{\sf subh}}
\newcommand{\harm}{{\sf harm}}
\newcommand{\ph}{{\sf Ph}}
\newcommand{\tl}{\tilde{\lambda}}
\newcommand{\ts}{\tilde{\sigma}}

\def\v{\varphi}
\def\Re{{\sf Re}\,}
\def\Im{{\sf Im}\,}
\def\rg{{\sf rg}\,}
\def\Lr{{\sf Lr}\,}

\def\dist{{\rm dist}}
\def\const{{\rm const}}
\def\rk{{\rm rank\,}}
\def\id{{\sf id}}
\def\aut{{\sf aut}}
\def\Aut{{\sf Aut}}
\def\CR{{\rm CR}}
\def\GL{{\sf GL}}
\def\U{{\sf U}}

\def\la{\langle}
\def\ra{\rangle}

\newtheorem{theorem}{Theorem}[section]
\newtheorem{lemma}[theorem]{Lemma}
\newtheorem{proposition}[theorem]{Proposition}
\newtheorem{corollary}[theorem]{Corollary}

\theoremstyle{definition}
\newtheorem{definition}[theorem]{Definition}
\newtheorem{example}[theorem]{Example}

\theoremstyle{remark}
\newtheorem{remark}[theorem]{Remark}
\numberwithin{equation}{section}

\title[abstract approach to Loewner chains]{An abstract approach to Loewner chains}

\author[L. Arosio]{Leandro Arosio$^{\ddagger}$}
\address{Istituto Nazionale di Alta Matematica ``Francesco Severi'', Citt\`a Universitaria, Piazzale Aldo Moro 5, 00185 Rome, Italy}
\email{arosio@altamatematica.it}
\thanks{$^{\ddagger}$Titolare di una Borsa della Fondazione Roma - Terzo Settore  bandita dall'Istituto Nazionale di Alta Matematica}

\author[F. Bracci]{Filippo Bracci}
\address{Dipartimento di Matematica, Universit\`a Di Roma ``Tor Vergata'',
Via della Ricerca Scientifica 1, 00133 Rome, Italy}
\email{fbracci@mat.uniroma2.it}

\author[H. Hamada]{Hidetaka Hamada$^{*}$}
\address{Faculty of Engineering, Kyushu Sangyo University, 3-1 Matsukadai 2-Chome,
Higashi-ku Fukuoka 813-8503, Japan}
\email{h.hamada@ip.kyusan-u.ac.jp}
\thanks{$^{*}$Partially
supported by Grant-in-Aid for Scientific Research (C)
No. 22540213 from Japan Society for the Promotion of Science, 2011}

\author[G. Kohr]{Gabriela Kohr$^{**}$}
\address{Faculty of Mathematics and Computer Science,
Babe\c{s}-Bolyai University, 1 M. Kog\u{a}l\-niceanu Str., 400084
Cluj-Napoca, Romania} \email{gkohr@math.ubbcluj.ro}
\thanks{$^{**}$Partially
supported by the Romanian Ministry of Education and Research, UEFISCSU-CNCSIS
Grant PN-II-ID 524/2007}

\subjclass[2000]{Primary 32H02; Secondary 30C45}

\date{}

\keywords{evolution family, Herglotz vector field, Loewner chain,
Loewner differential equation, transition mapping}

\begin{abstract}
We present a new geometric construction of Loewner chains in one
and several complex variables which holds  on complete hyperbolic complex
manifolds and prove that there is essentially a one-to-one
correspondence between evolution families of order $d$ and
Loewner chains of the same order.  As a consequence we obtain an univalent solution $(f_t\colon M\to N)$ for any
Loewner-Kufarev  PDE. The problem of finding  solutions given by univalent mappings $(f_t\colon M\to \C^n)$ is reduced to investigating whether the complex manifold $\cup_{t\geq 0}f_t(M)$ is biholomorphic to a domain in $\C^n$. We apply
such results to the study of univalent mappings from the
unit ball  $\B^n$ to $\C^n$.
\end{abstract}

\maketitle

\section{Introduction}

Loewner's partial differential equation
$$
\frac{\partial f_s}{\partial s}(z)=-\frac{\partial f_s}{\partial z}(z)G(z,s),\quad \mbox{a.e. }s\geq 0, z\in M
$$
received much attention from
mathematicians since Charles Loewner \cite{Loewner}
introduced it in 1923 to study extremal problems and, later,
P.P. Kufarev \cite{Kuf1943} and C. Pommerenke \cite{Po64},  \cite{Po} fully
developed the original theory. Such an
equation was a cornerstone in the de Branges' proof of the
Bieberbach conjecture. In 1999 O. Schramm \cite{Schramm}
introduced a stochastic version of the original differential
equation, nowadays known as SLE, which, among other things, was
a basic tool to prove Mandelbrot's conjecture by himself, G.
Lawler and W. Werner.

Loewner's original theory has been extended (see \cite{Pf74}, \cite{Pf75}, \cite{GHK01},
\cite{GHK09}, \cite{GK03}, \cite{Por91}) to higher
dimensional balls in $\C^n$ and successfully used to study
distortion, star-likeness, spiral-likeness and other geometric
properties of univalent mappings in higher dimensions.

Very recently, the second named author with M. Contreras and S.
D\'iaz-Madrigal \cite{BCD08}, \cite{BCD09} and Contreras,
D\'iaz-Madrigal and P. Gumenyuk \cite{CDG09} proposed a general
setting for the Loewner theory, which works also on complete hyperbolic complex manifolds.
While the classical theory deals with
normalized objects, this general theory does not, and
encloses the classical theory as a special case.

The aim of this paper is to present a general geometric
construction of Loewner chains on complete hyperbolic complex manifolds which
does not use any limit process (and thus it is new also for the
unit disc case) but relies on the apparently new
interpretation of  Loewner chains as the direct limit of evolution families, and to give applications of
such a theory to geometric properties of univalent mappings on the
unit ball. To be more precise, we need  some definitions. In the following, $M$ is a complete hyperbolic complex manifold of dimension $n$, and  $d\in [1,+\infty]$.
An {\sl $L^d$-evolution family}   on $M$ is a family $(\v_{s,t})_{0\leq s\leq t}$ of holomorphic self-mappings of $M$ satisfying the {\sl evolution property}
$$\v_{s,s}=\id,\quad \v_{s,t}=\v_{u,t}\circ \v_{s,u},\quad 0\leq s\leq u\leq t,$$   and
$t\mapsto \v_{s,t}(z)$ has some $L^d_{\sf loc}$-type regularity
locally uniformly with respect to $z\in M$ (see Definition \ref{L^d_EF}).

$L^d$-evolution families are trajectories of certain time-dependent
holomorphic vector fields on $M$, called {\sl Herglotz vector
fields}. An  $L^d$-Herglotz vector field $G(z,t)$  on  $M$ is a weak holomorphic
vector field in the sense of Carath\'eodory which satisfies a
suitable $L^d_{\sf loc}$-bound in $t$ uniformly on compacta of
$M$ and such that for almost every $t\geq 0$ the vector field
$z\mapsto G(z,t)$ is semicomplete (see Definition
\ref{Her-vec-man}).

The main result in \cite{BCD09} states that  there is a one-to-one correspondence between
evolution families  and Herglotz vector fields. The bridge for
such a correspondence is given by the following
Loewner-Kufarev ODE:
\begin{equation}
\label{Lo-ODE}
\frac{\de\v_{s,t}}{\de t}(z)= G(\v_{s,t}(z), t), \quad \hbox{a.e. } t\in \R^+.
\end{equation}

Both classical radial and chordal Loewner ODE in the unit disc
are just particular cases of such an equation (see
\cite{BCD08}).

$L^d$-Evolution families are strictly
related to $L^d$-Loewner chains. Such chains are
defined in the unit disc $\D$ in \cite{CDG09} as families  of univalent mappings $(f_t\colon \D\to\C)_{t\geq 0}$  such
that $f_s(\D)\subseteq f_t(\D)$ for all $0\leq s\leq t$
and satisfying an $L^d_{\sf loc}$ bound in $t$ uniformly on
compacta of $\D$. The classical Loewner chains in the unit disc
are particular cases of such chains.

The correspondence between evolution families and Loewner chains is provided by a functional equation:
an $L^d$-evolution family  and an $L^d$-Loewner chain are {\sl associated} if
\begin{equation}\label{associated}
f_s=f_t\circ \v_{s,t},\quad 0\leq s\leq t.
\end{equation}

In \cite{CDG09} it is proved that given an $L^d$-Loewner chain $(f_t)$
 in the unit disc $\D$, the family
$$(\v_{s,t}:=f_t^{-1}\circ f_s)$$ is an associated $L^d$-evolution family  and, conversely,  any $L^d$-evolution family
 admits  a unique (up to biholomorphisms) associated $L^d$-Loewner chain. Such a result, as already in the classical
theory, is based on a scaling limit process.

Similar results, in the case of $L^\infty$-evolution families  in the unit ball $\B^n\subset \C^n$ fixing the origin
and having a normalized differential at the origin, have been
obtained in \cite{GHK01}, \cite{GK03}.
In such works
Loewner chains are defined as image-increasing sequences of
univalent mappings on the ball with image in $\C^n$ fixing the
origin and having the differential subjected to some
normalization at the origin. Again,  Loewner chains are defined starting from
normalized evolution families by means of a scaling limit
process.

In this paper we propose a definition of
$L^d$-Loewner chains on complete hyperbolic complex manifolds and prove that equation (\ref{associated})
provides  a  one-to-one correspondence (up to biholomorphisms) between
$L^d$-Loewner chains and $L^d$-evolution families.
Since there exist complete hyperbolic complex manifolds (even non-compact ones) which are not biholomorphic to domains in $\mathbb{C}^n$,  requiring each $f_t$ to be a univalent mapping from $M$ to $\mathbb{C}^n$ would be unnecessarily restrictive. Hence we give the following definition:
let $N$ be a complex manifold of the same
dimension of $M$  and let $d_N$
denote the  distance  induced on $N$ by some Hermitian metric. A
family $(f_t: M\to N)_{t\geq 0}$ is an {\sl
$L^d$-Loewner chain}   if
\begin{itemize}
  \item[LC1.] For each $t\geq 0$ fixed, the mapping $f_t: M\to N$
  is univalent,
  \item[LC2.] $f_s(M)\subset f_t(M)$ for all $0\leq s\leq
  t<+\infty$,
  \item[LC3.] For any compact set $K\subset\subset M$ and any
  $T>0$ there exists a $k_{K,T}\in L^d([0,T], \R^+)$ such
  that for all $z\in K$ and for all $0\leq s\leq t\leq T$
  \[
d_N(f_s(z), f_t(z))\leq \int_{s}^t k_{K,T}(\xi)d\xi.
  \]
\end{itemize}

The main results of the present paper can be summarized as
follows.
\begin{theorem}
\label{main-intro} Let $M$ be a complete hyperbolic complex manifold of
dimension $n$. Let $(\v_{s,t})$ be an $L^d$-evolution family on
$M$. Then there exists an associated $L^d$-Loewner chain $(f_t\colon M\to N)$.
If $(g_t\colon M\to Q)$ is another $L^d$-Loewner chain associated with $(\v_{s,t})$, then there exists a
 biholomorphism $$\Lambda\colon \bigcup_{t\geq 0} f_t(M)\to   \bigcup_{t\geq 0} g_t(M)$$ such that $$g_t=\Lambda\circ f_t,\quad t\geq 0.$$

Conversely, if $(f_t\colon M\to N)$ is an $L^d$-Loewner chain, then $(\v_{s,t}:=f_t^{-1}\circ f_s)$ is an associated $L^d$-evolution family.
\end{theorem}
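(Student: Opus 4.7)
\emph{Strategy.} My plan is to realise $N$ as the direct limit of the directed system of copies $M_t$ of $M$, one for each $t\geq 0$, joined by the transition maps $\v_{s,t}\colon M_s\to M_t$ for $s\leq t$. Since the $\v_{s,t}$ are univalent holomorphic self-maps of $M$ (a property of evolution families that I invoke from \cite{BCD09}), each is an open holomorphic embedding, and I set
\[
N\;:=\;\Bigl(\bigsqcup_{t\ge 0}M_t\Bigr)\Big/{\sim},\qquad (z,s)\sim(w,t)\Longleftrightarrow\exists\,u\ge\max(s,t)\colon \v_{s,u}(z)=\v_{t,u}(w),
\]
with $f_t\colon M\to N$ the natural map $z\mapsto [z,t]$. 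The evolution property makes $\sim$ an equivalence relation, and forces both the functional equation $f_s=f_t\circ\v_{s,t}$ and \textup{LC2} to hold by construction.

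\medskip

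\emph{Complex structure and verification of the Loewner chain axioms.} I equip $N$ with the finest topology making each $f_t$ continuous, and define charts by pulling back charts of $M$ through $f_t$; overlap maps are locally of the form $\v_{s,u}\circ\v_{t,u}^{-1}$, holomorphic by construction, so $N$ inherits a complex structure of dimension $n$. The delicate point is \emph{Hausdorffness}: if $[z,s]\ne [w,t]$ I would show that the forward orbits $\{\v_{s,u}(z)\}_{u\ge s}$ and $\{\v_{t,u}(w)\}_{u\ge t}$ admit disjoint relatively compact neighborhoods in $M$, using the complete hyperbolicity of $M$ and the contraction of the Kobayashi distance by the $\v_{s,u}$. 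Once $N$ is a manifold, I fix any Hermitian metric on it; \textup{LC1} reduces to the injectivity of each $\v_{t,u}$ for $u\ge t$, and \textup{LC3} is obtained by writing
\[
d_N(f_s(z),f_t(z))=d_N\bigl(f_t(\v_{s,t}(z)),f_t(z)\bigr)
\]
and transferring the $L^d$-regularity of $\v_{s,t}$ through the local Lipschitz property of $f_t$ on a compact set containing the orbit $\{\v_{s,u}(z):u\in[s,T]\}$, itself compact by that same regularity.

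\medskip

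\emph{Uniqueness and converse.} Given a second associated chain $(g_t\colon M\to Q)$, I define
\[
\Lambda\colon\bigcup_{t\ge 0}f_t(M)\to\bigcup_{t\ge 0}g_t(M),\qquad \Lambda(f_t(z)):=g_t(z);
\]
the relations $f_s=f_t\circ\v_{s,t}$ and $g_s=g_t\circ\v_{s,t}$, combined with univalence of each $f_t$ and $g_t$, force $\Lambda$ to be well defined, bijective and locally equal to $g_t\circ f_t^{-1}$, hence a biholomorphism — this is precisely the universal property of the direct limit. For the converse, if $(f_t)$ is an $L^d$-Loewner chain, then $\v_{s,t}:=f_t^{-1}\circ f_s$ is a holomorphic self-map of $M$ by \textup{LC1}, \textup{LC2}; the evolution property is immediate, and the $L^d$-regularity of $\v_{s,t}$ follows from \textup{LC3} together with the local Lipschitz property of $f_t^{-1}$ on compacta of $f_t(M)$.

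\medskip

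\emph{Main obstacle.} The real work is the construction of $N$: guaranteeing that the set-theoretic direct limit is Hausdorff and carries a complex structure making the canonical maps $f_t$ genuinely univalent rather than merely locally biholomorphic. Hausdorffness in particular may fail for abstract direct limits of open embeddings, and here the complete hyperbolicity of $M$ and the $L^d$-regularity of the evolution family are essential. Everything else — associativity, regularity, uniqueness, converse — is dictated by the universal property of the colimit.
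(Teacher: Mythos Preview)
Your approach is essentially the paper's: the manifold $N$ is constructed as the direct limit of the system $(\v_{s,t})$ exactly as you describe, the uniqueness statement is the universal property of the colimit, and the converse is obtained by pushing the $L^d$-estimate of $(f_t)$ through local Lipschitz bounds for $f_t^{-1}$.

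However, you have misidentified the ``main obstacle''. Hausdorffness of $N$ is \emph{not} delicate and requires neither complete hyperbolicity nor $L^d$-regularity: since the images $f_t(M)$ are open, nested, and each is homeomorphic to $M$ (hence Hausdorff), any two distinct points of $N$ lie in a common $f_u(M)$ for large $u$ and are separated there. This is why the paper carries out the construction for \emph{algebraic} evolution families, assuming only univalence and the evolution property, on an arbitrary complex manifold.

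The genuinely delicate steps, which your sketch passes over too quickly, are the uniformity-in-$t$ issues hidden in the $L^d$ estimates. For LC3 your formula $d_N(f_s(z),f_t(z))=d_N(f_t(\v_{s,t}(z)),f_t(z))$ needs a Lipschitz constant for $f_t$ that is uniform over $t\in[0,T]$; the paper avoids this by routing everything through the single map $f_T$ and proving an equi-Lipschitz bound for the family $(\v_{t,T})_{t\in[0,T]}$. For the converse, ``local Lipschitz property of $f_t^{-1}$'' is again not enough: one needs $L(K_t,t):=\sup_{\zeta\neq\eta\in K_t}d_M(f_t^{-1}\zeta,f_t^{-1}\eta)/d_N(\zeta,\eta)$ to be bounded uniformly in $t\in[0,T]$, and this is where the paper invokes a kernel convergence theorem (their Theorem~\ref{kernel}) to control $f_t^{-1}$ as $t$ varies. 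That theorem, and the complete hyperbolicity it requires, is the real engine of the converse direction; it does not appear in your outline.
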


The first part of the result holds more generally on taut
manifolds (see Theorems \ref{algebraic_theorem} and \ref{ev-to-Low}). The second part is proved in Theorem
\ref{Low-to-ev}. In order to prove the result we exploit a {\sl
kernel convergence theorem} on complete hyperbolic complex manifolds which we
prove in Theorem \ref{kernel}.

The associated $L^d$-Loewner chain $(f_t\colon M\to N)$ is constructed as the direct limit of the $L^d$-evolution family $(\v_{s,t})$ in the following way. Define an equivalence relation on the product $M\times \mathbb{R}^+$:
$$(x,s)\sim (y,t)\quad\mbox{iff}\quad   \v_{s,u}(x)=\v_{t,u}(y) \mbox{ for $u$ large enough},$$ and define $N:=(M\times \mathbb{R}^+)/_\sim.$
Let $\pi\colon M\times \mathbb{R}^+\to N$ be the projection on the quotient, and let  $i_t\colon M\to M\times\mathbb{R}^+$ be the injection $i_t(x)=(x,t)$. The chain is then defined as $$f_t:= \pi\circ i_t,\quad t\geq 0.$$
Equation (\ref{associated}) holds since
$$ \pi\circ i_s= \pi\circ i_t \circ \v_{s,t}, \quad 0\leq s\leq t.$$
Then we endow  $N=\bigcup_{t\geq 0} f_t(M)$ with a complex manifold structure which makes the mappings $f_t$'s holomorphic and we prove the $L^d$-estimate.

As a consequence of Theorem \ref{main-intro}, we can define the {\sl Loewner range} ${\sf Lr}(\v_{s,t})$ of  $(\v_{s,t})$ as the biholomorphism class of $\bigcup_{t\geq 0} f_t(M)$, where $(f_t)$ is any associated $L^d$-Loewner chain. The Loewner range can be seen as an analogue of the abstract basin of attraction defined by Fornaess and Stens\o nes in the setting of discrete holomorphic dynamics with an attractive fixed point \cite{Fo-Ste}. This  suggests the following dynamical interpretation of the Loewner range: let $Q$ be a complex manifold and assume that  an algebraic evolution family of automorphisms $(\Phi_{s,t}\colon Q\to Q)$ has an invariant domain $D\subset Q$. Let $(\v_{s,t}\colon D\to D)$ be the algebraic evolution family obtained restricting $(\Phi_{s,t})$. Then the complex manifold $$\{z\in Q : \Phi_{0,t}(z)\in D\mbox{ for $t$ big enough}\}$$ is biholomorphic to $\Lr(\v_{s,t})$.

If $(\v_{s,t})$ is an $L^d$-evolution family on the unit disc $\mathbb{D}$ the Loewner range has to be simply connected and cannot be compact, thus by the uniformization theorem it has to be biholomorphic to $\mathbb{D}$ or $\mathbb{C}$, and, as noticed also in \cite{CDG09}, the choice depends on the dynamics of $(\v_{s,t})$. Generalizing this result we prove that  if $(f_t)$ and $(\v_{s,t})$ are associated, then
$$f_s^*\kappa_{{\sf Lr}(\v_{s,t})}=\lim_{t \to \infty}\v_{s,t}^*\kappa_M,\quad s\geq 0,$$ where $\kappa_M$ and $\kappa_{\sf Lr(\v_{s,t})}$ are the Kobayashi pseudometrics of $M$ and ${\sf Lr}(\v_{s,t})$ respectively. Using results from Fornaess and Sibony \cite{F-S} we  provide in Theorem \ref{forsi} some conditions on the corank of the Kobayashi pseudometric in order
to determine the Loewner range of an $L^d$-evolution family.

In dimension one, Theorem \ref{main-intro} and the
uniformization theorem allow to recover both the classical
results of Loewner,  Kufarev, Pommerenke and the new results by
Contreras, D\'iaz-Madrigal and Gumenyuk. In higher dimensions
these results are new.

Let now  $G(z,t)$ be an $L^d$-Herglotz vector field whose flow is given by an $L^d$-evolution family as in (\ref{Lo-ODE}), and let $N$ be a complex manifold of dimension $n$.
Theorem \ref{main-intro} yields that  a family of univalent mappings $(f_t\colon M\to N)$  solves the Loewner-Kufarev PDE
$$\frac{\partial f_s}{\partial s}(z)=-(df_s)_zG(z,s),\quad \mbox{a.e. }s\geq 0, z\in M$$ if and only if it is an $L^d$-Loewner chain associated with $(\v_{s,t})$. A  solution  given by univalent mappings $(f_t\colon M\to \C^n)$ exists if and only if the Loewner range $\Lr(\v_{t,s})$ is biholomorphic to a domain in $\C^n$.

In Section \ref{conjugacy} we introduce a notion of conjugacy for $L^d$-evolution families which preserves the Loewner range.
In Section \ref{ss62} we give examples of $L^d$-Loewner
chains in the unit ball generated by the Roper-Suffridge
extension operator.
In Section \ref{ss63} we consider
spiral-shaped and star-shaped mappings and give a characterization of
such mappings.

\section{Evolution families and Herglotz vector fields}

In the rest of this paper, unless differently stated, all
manifolds are assumed to be connected. Let $M$ be a complex
manifold and let $d_M$ denote the distance associated with a
given Hermitian metric on $M$. In the sequel we will also use
the  Kobayashi pseudodistance $k_M$ on $M$ and the associated
Kobayashi pseudometric $\kappa_M$ on $M$. For definitions and
properties we refer the reader to the books \cite{abate}, \cite{Kob}.

\begin{definition}\label{L^d_EF}
Let $M$ be a taut manifold.
A family $(\v_{s,t})_{0\leq s\leq t}$ of holomorphic
self-mappings of $M$ is  an {\sl
 evolution family of order $d\geq 1$} (or $L^d$-evolution family) if it satisfies the {\sl evolution property}
\begin{equation}\label{evolution_property}
\v_{s,s}=\id,\quad \v_{s,t}=\v_{u,t}\circ \v_{s,u},\quad 0\leq s\leq u\leq t,
\end{equation}
 and if for any $T>0$ and for any compact set
  $K\subset\subset M$ there exists a  function $c_{T,K}\in
  L^d([0,T],\R^+)$ such that
  \begin{equation}\label{ck-evd}
d_M(\v_{s,t}(z), \v_{s,u}(z))\leq \int_{u}^t
c_{T,K}(\xi)d \xi, \quad z\in K,\  0\leq s\leq u\leq t\leq T.
  \end{equation}
\end{definition}

The following lemma is proved in \cite[Lemma 2]{BCD09}.
\begin{lemma}\label{jointly}
Let $d\in[1,+\infty]$. Let $(\v_{s,t})$ be an $L^d$-evolution family. Let $\Delta :=\{(s,t):0\leq s\leq t\}$. Then the mapping $$(s,t)\mapsto \v_{s,t}$$ from $\Delta$ to $\hol(M,M)$ endowed with the topology of uniform convergence on compacta is jointly continuous. Hence the mapping $\Phi(z,s,t):= \v_{s,t}(z)$ from $M\times \Delta$ to $M$ is jointly continuous.
\end{lemma}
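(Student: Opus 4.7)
My plan is to establish the joint continuity by proving (i) pointwise convergence $\v_{s_n,t_n}(z)\to\v_{s_0,t_0}(z)$ for each fixed $z\in M$ whenever $(s_n,t_n)\to(s_0,t_0)$ in $\Delta$, together with (ii) equicontinuity on compacta of the family $\{\v_{s,t}:0\le s\le t\le T\}$. A standard Arzel\`a--Ascoli-type argument then upgrades pointwise to uniform convergence on every compact set, yielding continuity in the UCC topology. Claim (ii) is immediate: every $\v_{s,t}$ belongs to $\hol(M,M)$ and is therefore $1$-Lipschitz with respect to the Kobayashi pseudometric $k_M$. On the taut, hence hyperbolic, manifold $M$ both $k_M$ and $d_M$ metrize the manifold topology, so on any compact subset they induce the same uniformity, and the family is equicontinuous in $d_M$.

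For (i), I would insert the intermediate term $\v_{s_0,t_n}(z)$ and apply the triangle inequality:
$$d_M(\v_{s_n,t_n}(z),\v_{s_0,t_0}(z))\le d_M(\v_{s_n,t_n}(z),\v_{s_0,t_n}(z))+d_M(\v_{s_0,t_n}(z),\v_{s_0,t_0}(z)).$$
The second summand tends to zero directly from \eqref{ck-evd}. For the first, the evolution property \eqref{evolution_property} yields, in the case $s_0\le s_n$, the factorization $\v_{s_0,t_n}(z)=\v_{s_n,t_n}(\v_{s_0,s_n}(z))$, so the first summand equals $d_M(\v_{s_n,t_n}(z),\v_{s_n,t_n}(\v_{s_0,s_n}(z)))$. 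Kobayashi contraction bounds this in $k_M$ by $k_M(z,\v_{s_0,s_n}(z))$, which tends to zero because $d_M(z,\v_{s_0,s_n}(z))\to 0$ by \eqref{ck-evd} and because the two metrics define the same uniformity on compacta. The symmetric case $s_n\le s_0$ is handled by the dual factorization $\v_{s_n,t_n}(z)=\v_{s_0,t_n}(\v_{s_n,s_0}(z))$.

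The main technical obstacle is ensuring that the $k_M$-to-$d_M$ transfer used above is genuinely uniform over $K$: one must keep the points produced along the way inside a fixed compact subset of $M$, since equivalence of the two metrics is only a local statement. I would handle this by choosing a relatively compact open neighborhood $U\supset K$; since $K$ is compact and $M\setminus U$ closed and disjoint from $K$, one has $d_M(K,M\setminus U)>0$, and \eqref{ck-evd} then confines $\v_{s,u}(K)\subset\overline{U}$ for sufficiently small time increments. Tautness of $M$, via the normality of $\hol(M,M)$, is the crucial hypothesis that prevents subsequences of $(\v_{s_n,t_n})$ from compactly diverging and makes the Kobayashi-to-Hermitian passage uniform on $K$.
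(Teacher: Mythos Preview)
The paper does not supply its own proof of this lemma; it simply cites \cite[Lemma~2]{BCD09}. Your outline is therefore not competing against anything in the present paper, and the strategy you describe---pointwise convergence from the $L^d$~estimate \eqref{ck-evd} plus the evolution property \eqref{evolution_property}, followed by an upgrade to uniform convergence on compacta using tautness---is the natural one and is essentially correct.

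Two places deserve tightening. First, your equicontinuity claim (ii) is not quite justified as written: the $1$-Lipschitz bound in $k_M$ controls $k_M(\v_{s,t}(x),\v_{s,t}(y))$, but to translate this into a $d_M$-bound you need the \emph{images} $\v_{s,t}(x),\v_{s,t}(y)$ to lie in a fixed compact set, which is part of what you are proving. The clean route---which you in fact invoke in your final paragraph---is to bypass Arzel\`a--Ascoli and use tautness directly: $\hol(M,M)$ is a normal family, your pointwise convergence at a single point already excludes compactly divergent subsequences, hence every subsequence of $(\v_{s_n,t_n})$ admits a UCC-convergent subsubsequence whose limit is forced to be $\v_{s_0,t_0}$ by your pointwise step. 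This yields UCC convergence of the full sequence without any $k_M$-to-$d_M$ uniformity issues. Second, your factorization in the case $s_n\le s_0$ tacitly assumes $s_0\le t_n$; this can fail on the diagonal $s_0=t_0$, but there $\v_{s_0,t_0}=\id$ and \eqref{ck-evd} gives $d_M(\v_{s_n,t_n}(z),z)\le\int_{s_n}^{t_n}c_{T,K}\,d\xi\to 0$ directly, so the case is harmless once noted.
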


\begin{proposition}\label{ev-univ}
Let $d\in[1,+\infty]$.
Let $(\v_{s,t})$ be an $L^d$-evolution family. Then for all $0\leq s\leq t$ the mapping $(\v_{s,t})$ is univalent.
\end{proposition}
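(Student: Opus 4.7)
The plan is a contradiction argument based on the observation that $\v_{u,v}$ is close to $\id$ when $v$ is close to $u$, combined with the evolution property to propagate any failure of injectivity to a small neighbourhood of the identity, where it is easily ruled out. First I would specialize the $L^d$-bound \eqref{ck-evd} to $s=v=u$, giving
\[
d_M(\v_{u,v}(z),z)\leq \int_u^v c_{T,K}(\xi)\,d\xi,\qquad z\in K,\ 0\leq u\leq v\leq T.
\]
Since $c_{T,K}\in L^d([0,T],\R^+)\subset L^1([0,T],\R^+)$, absolute continuity of the Lebesgue integral forces the right-hand side to zero as $v-u\to 0$, uniformly in $u\in[0,T]$. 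Thus $\v_{u,v}\to\id$ uniformly on $K$ as $v-u\to 0$, with rate independent of $u$.

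Next I would establish a \emph{local injectivity lemma}: for every $u_0\geq 0$ and every $p\in M$ there exist a neighbourhood $U$ of $p$ and $\delta>0$ such that $\v_{u,v}|_U$ is injective whenever $|u-u_0|<\delta$, $|v-u_0|<\delta$ and $u\leq v$. In a holomorphic chart around $p$ identified with a Euclidean ball in $\C^n$, the previous step gives $\v_{u,v}\to\id$ in the $C^0$-sense; Cauchy estimates upgrade this to $d\v_{u,v}\to I$ uniformly on a smaller concentric chart neighbourhood $U$. A standard mean-value identity
\[
\v_{u,v}(z_1)-\v_{u,v}(z_2)=(z_1-z_2)+\int_0^1 \bigl(d\v_{u,v}|_{z_2+r(z_1-z_2)}-I\bigr)(z_1-z_2)\,dr
\]
then yields injectivity on $U$ as soon as $\|d\v_{u,v}-I\|_U<1/2$.

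With these two facts in hand, suppose toward a contradiction that $\v_{s,t}(z_1)=\v_{s,t}(z_2)$ for some $z_1\neq z_2$. Set
\[
\sigma:=\inf\bigl\{u\in[s,t]:\v_{s,u}(z_1)=\v_{s,u}(z_2)\bigr\};
\]
this set is nonempty (it contains $t$) and closed by the joint continuity of Lemma \ref{jointly}, so the infimum is attained; write $y_\ast:=\v_{s,\sigma}(z_1)=\v_{s,\sigma}(z_2)$. Because $\v_{s,s}=\id$ and $z_1\neq z_2$, one has $\sigma>s$. For $u\in[s,\sigma)$ put $y_i(u):=\v_{s,u}(z_i)$; by the very definition of $\sigma$, $y_1(u)\neq y_2(u)$, while the evolution property \eqref{evolution_property} gives $\v_{u,\sigma}(y_1(u))=y_\ast=\v_{u,\sigma}(y_2(u))$. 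By joint continuity $y_i(u)\to y_\ast$ as $u\to\sigma^-$, and by the local injectivity lemma applied at $(u_0,p)=(\sigma,y_\ast)$ the map $\v_{u,\sigma}$ is injective on a fixed neighbourhood $U$ of $y_\ast$ for $u$ close enough to $\sigma$. For such $u$ the points $y_1(u),y_2(u)$ both lie in $U$ and are distinct yet identified by $\v_{u,\sigma}$, a contradiction.

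The main obstacle is the local injectivity lemma: converting the $L^d$-integrated distance estimate into a uniform derivative estimate via Cauchy estimates requires the uniformity-in-$u$ of the first step (together with Lemma \ref{jointly}) to hold in a full two-parameter neighbourhood of $(u_0,u_0)$, not merely along the diagonal. Once this is in place, the contradiction argument is a short \emph{first instant of failure} deduction from the semigroup identity \eqref{evolution_property}.
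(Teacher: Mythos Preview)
Your argument is correct and follows essentially the same route as the paper: a first-instant-of-failure argument combined with the evolution property to show that $\v_{u,\sigma}$ fails to be injective arbitrarily close to the identity, then a contradiction from $\v_{u,\sigma}\to\id$. The only difference is that you spell out the local injectivity step explicitly via Cauchy estimates and a mean-value inequality, whereas the paper simply asserts the contradiction from ``$\lim_{u\to r^-}\v_{u,r}=\id$ uniformly on compacta \dots\ since the identity mapping is univalent,'' leaving that step implicit.
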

\begin{proof}
We proceed by contradiction. Suppose  there exists $0<s<t$ and $z\neq w$ in $M$ such that $\v_{s,t}(z)=\v_{s,t}(w).$ Set $r:= \inf \{u\in [s,t]: \v_{s,u}(z)=\v_{s,u}(w)\}.$
Since by Lemma \ref{jointly} $\lim_{u\to s+}\v_{s,u}=\id$ uniformly on compacta, we have $r>s$. If $u\in (s,r)$, $$\v_{u,r}(\v_{s,u}(z))=\v_{u,r}(\v_{s,u}(w)),$$ and since $\v_{s,u}(z)\neq \v_{s,u}(w),$ the mappings $\v_{u,r}$, $u\in (s,r)$, are not univalent on a fixed relatively compact subset of $M$. But by Lemma \ref{jointly} $\lim_{u\to r-} \v_{u,r}=\id$ uniformly on compacta, which is a contradiction since the identity mapping is univalent.
\end{proof}

\begin{definition}
\label{Her-vec-man} A \textit{weak holomorphic vector field of
order $d\geq 1$} on $M$ is a mapping $G:M\times \R^+\to
TM$ with the following properties:
\begin{itemize}
\item[(i)] The mapping $G(z,\cdot)$ is measurable on $\R^+$ for all
$z\in M$.
\item[(ii)] The mapping $G(\cdot,t)$ is holomorphic on $M$ for all $t\in \R^+$.
\item[(iii)] For any compact set $K\subset M$ and all $T>0$, there exists
a function $C_{K,T}\in L^d([0,T],\mathbb{R}^+)$ such that
$$\|G(z,t)\|\leq C_{K,T}(t),\quad z\in K, \mbox{ a.e.}\ t\in [0,T].$$
\end{itemize}

Let $M$ be a taut manifold. Assume moreover that the Kobayashi distance $k_M$ satisfies
$k_M\in C^1(M\times M\setminus \mbox{Diag})$.
A \textit{Herglotz vector field of order $d$} is a weak holomorphic vector field of order $d$ such that
\begin{equation}\label{herglotz_def}
(dk_M)_{(z,w)}(G(z,t),G(w,t))\leq 0,\quad z,w\in M, z\neq w,\ \mbox{ a.e. }
t\geq 0.
\end{equation}
\end{definition}
\begin{remark}
If $M$ is complete hyperbolic, then  condition (\ref{herglotz_def})
 means exactly that for almost every $t\geq 0$ the
holomorphic vector field $z\mapsto G(z,t)$ is semicomplete
 (this is proved in \cite{BCD10}  for
strongly convex domains, but the same proof works in the complete hyperbolic case).
\end{remark}

The result in \cite{BCD09} which we will use in the sequel is
the following:
\begin{theorem}\label{prel-thm}
Let $M$ be a complete hyperbolic complex manifold with Kobayashi
distance $k_M$. Assume that $k_M\in C^1(M\times M\setminus
\hbox{Diag})$. Then for any  Herglotz vector field $G$ of order
$d\in [1,+\infty]$ there exists a unique $L^d$-evolution family
$(\v_{s,t})$  over $M$ such that for all
$z\in M$
  \begin{equation}\label{solve}
\frac{\de \v_{s,t}}{\de t}(z)=G(\v_{s,t}(z),t) \quad
\hbox{a.e.\ } t\in [s,+\infty).
  \end{equation}
Conversely for any  $L^\infty$-evolution family $(\v_{s,t})$  over $M$ there exists a  Herglotz vector field $G$ of
order $\infty$ such that \eqref{solve} is satisfied. Moreover,
if $H$ is another weak holomorphic vector field which satisfies
\eqref{solve} then $G(z,t)=H(z,t)$ for all $z\in M$ and almost
every $t\in \R^+$.
\end{theorem}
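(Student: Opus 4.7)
The plan is to handle the two directions of the correspondence separately, with uniqueness on each side following from standard ODE arguments. Throughout, I work in local holomorphic charts on $M$ so that equation (\ref{solve}) can be treated as a non-autonomous ODE in $\C^n$.

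\textbf{From a Herglotz vector field to an evolution family.} Given $G$, the $L^d$-bound on compacta together with holomorphy (hence local Lipschitz-ness) in $z$ place us within Carath\'eodory's framework for ODEs with measurable time-dependence, yielding unique local-in-$t$ solutions of $\partial_t \v_{s,t}(z) = G(\v_{s,t}(z), t)$ with $\v_{s,s}(z) = z$. The central obstacle is global existence in $t$, and my plan is to approximate $G$ by step vector fields $G_n$ that are piecewise constant in $t$. By the Herglotz condition (\ref{herglotz_def}), each frozen vector field $G_n(\cdot, t_k)$ is semi-complete, so its positive-time flow is globally defined, holomorphic, and Kobayashi-contracting; the associated $\v^n_{s,t}$ obtained by composing these autonomous flows is therefore a globally defined, holomorphic, Kobayashi-contracting evolution family. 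A Gr\"onwall-type argument together with the $L^d$-convergence $G_n \to G$ and with the fact that the trajectories stay in a fixed compact subset of $M$ (by completeness combined with Kobayashi-contraction) shows $\v^n_{s,t} \to \v_{s,t}$ locally uniformly. The limit satisfies the evolution property by uniqueness, is holomorphic as a uniform limit of holomorphic mappings, and the $L^d$-regularity (\ref{ck-evd}) follows by integrating the ODE and bounding $\|G(\v_{s,\xi}(z), \xi)\|$ by the corresponding $L^d$ function on compacta.

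\textbf{From an $L^\infty$-evolution family to a Herglotz vector field.} My strategy is to define $G(z, t)$ as the diagonal right derivative $\lim_{h \to 0^+} h^{-1}(\v_{t, t+h}(z) - z)$ in local coordinates. The main difficulty, and indeed the crux of the converse, is the existence of this limit for almost every $t$ together with holomorphy in $z$, measurability in $t$, and the Herglotz estimate (\ref{herglotz_def}). From the $L^\infty$-bound the family $\{h^{-1}(\v_{t, t+h} - \id)\}_{h > 0}$ is locally bounded in $\hol(U, \C^n)$ on a chart domain $U$, so by Montel any cluster point is holomorphic. The Lebesgue differentiation theorem applied to the locally Lipschitz curves $t \mapsto \v_{s,t}(z)$ on a countable dense set of $z \in M$, combined with the joint continuity of Lemma \ref{jointly} to transfer the derivative to the diagonal $\v_{t, t+h}$, produces a well-defined $G(\cdot, t)$ for almost every $t$; measurability in $t$ is inherited from the joint continuity. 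The Herglotz inequality comes from differentiating the Kobayashi contraction $k_M(\v_{t, t+h}(z), \v_{t, t+h}(w)) \leq k_M(z, w)$ at $h = 0^+$, using that $k_M \in C^1$ off the diagonal. Finally, equation (\ref{solve}) holds by the fundamental theorem of calculus along $t \mapsto \v_{s,t}(z)$, and uniqueness of $G$ is immediate since (\ref{solve}) determines the values $G(\v_{s,t}(z), t)$ for almost every $t$ at each $z$.
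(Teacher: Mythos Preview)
The paper does not prove Theorem~\ref{prel-thm}: it is quoted verbatim as a preliminary result from \cite{BCD09} (see the sentence preceding the theorem, ``The result in \cite{BCD09} which we will use in the sequel is the following''). There is therefore no proof in this paper to compare your proposal against.

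That said, your outline is broadly in the spirit of the argument in \cite{BCD09}. A few comments on where your sketch would need tightening before it counts as a proof. In the first direction, the sentence ``trajectories stay in a fixed compact subset of $M$ (by completeness combined with Kobayashi-contraction)'' is the heart of the matter and is not justified by what you wrote: the Herglotz inequality gives $k_M(\v_{s,t}(z),\v_{s,t}(w))\le k_M(z,w)$ for \emph{pairs} of trajectories, but not a bound on $k_M(\v_{s,t}(z),z_0)$ for a fixed reference point $z_0$, so one still has to explain why some (hence every) trajectory stays bounded on $[s,T]$ uniformly in the approximation index $n$. In \cite{BCD09} this is handled by differentiating $t\mapsto k_M(x(t),y(t))$ directly along Carath\'eodory solutions and using completeness to run a continuation argument, rather than via step approximations. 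In the second direction your plan is essentially correct but compresses the delicate point: passing from a.e.\ differentiability of $t\mapsto \v_{s,t}(z)$ for each fixed $z$ (and fixed $s$) to a single full-measure set of times, independent of $z$ and $s$, on which $G(\cdot,t)$ is well defined and holomorphic requires a Vitali-type argument on a countable set of uniqueness, together with the transfer from $\partial_t\v_{s,t}$ to the diagonal limit $h^{-1}(\v_{t,t+h}-\id)$; you state both ingredients but do not carry out the transfer.
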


\section{Kernel convergence  on complex manifolds}

Let $B(z_0,r)\subset \C^n$ denote the Euclidean open ball of center $z_0$
and radius $r>0$ (as customary, we denote by $\B^n:=B(0,1)$ the Euclidean open ball centered at the origin and radius $1$).

\begin{proposition}\label{local1-1}
Let $U\subset \mathbb{C}^n$ be an open set. Let $f_k\colon U\rightarrow \mathbb{C}^n$
be a sequence of univalent mappings.
Assume that $f_k\rightarrow f$ uniformly on compacta and that $f$ is univalent.
Then for all $z_0\in U$ and $0<s<r$ such that $B(z_0,s)\subset\subset B(z_0,r)\subset\subset U$
there exists $m=m(z_0,s,r)$ such that if $k>m$ then  $$ f(B(z_0,s))\subset f_k(B(z_0,r)).$$
\end{proposition}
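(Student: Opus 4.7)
The plan is to adapt the one-variable Hurwitz argument to several variables by means of topological degree theory in $\R^{2n}$. Fix $s'$ with $s<s'<r$ so that $\overline{B(z_0,s')}\subset B(z_0,r)$.

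First I would invoke the classical theorem (due to Osgood) that an injective holomorphic mapping $U\to\C^n$ has nowhere vanishing Jacobian, so both $f$ and each $f_k$ are local biholomorphisms; in particular the restriction of $f$ to $\overline{B(z_0,s')}$ is a homeomorphism onto its image. Next I would set
$$
d_0:=\dist\bigl(f(\overline{B(z_0,s)}),\,f(\partial B(z_0,s'))\bigr).
$$
This quantity is strictly positive because $\overline{B(z_0,s)}$ and $\partial B(z_0,s')$ are disjoint compact subsets of $U$ (as $s<s'$) and $f$ is continuous and injective on the compact set $\overline{B(z_0,s')}$. By uniform convergence on the compactum $\overline{B(z_0,s')}$, one can then choose $m=m(z_0,s,r)$ large enough that for all $k>m$ one has $\|f_k(x)-f(x)\|<d_0/2$ for every $x\in\overline{B(z_0,s')}$. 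This $m$ will be the one claimed in the proposition.

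Now fix an arbitrary $w\in f(B(z_0,s))$, write $w=f(a)$ with $a\in B(z_0,s)$, and consider the continuous maps $h,h_k\colon\overline{B(z_0,s')}\to\C^n$ defined by $h(x):=f(x)-w$ and $h_k(x):=f_k(x)-w$. For $k>m$ and $x\in\partial B(z_0,s')$ the choice of $d_0$ gives $\|h(x)\|\geq d_0$ and hence $\|h_k(x)\|\geq d_0-\|f_k(x)-f(x)\|>d_0/2$. Viewing $\C^n\cong\R^{2n}$, the straight-line homotopy $H_\tau:=(1-\tau)h+\tau h_k$ therefore does not vanish on $\partial B(z_0,s')$, so by homotopy invariance of the Brouwer degree
$$
\deg\bigl(h,B(z_0,s'),0\bigr)=\deg\bigl(h_k,B(z_0,s'),0\bigr).
$$

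Since $f$ is univalent, $a$ is the unique zero of $h$ in $B(z_0,s')$, and the nonvanishing holomorphic Jacobian together with orientation-preservation of holomorphic maps gives $\deg(h,B(z_0,s'),0)=+1$. Hence $\deg(h_k,B(z_0,s'),0)=1\neq 0$, and the solvability property of the degree produces some $x\in B(z_0,s')\subset B(z_0,r)$ with $f_k(x)=w$. Since $m$ was chosen independently of $w$, the inclusion $f(B(z_0,s))\subset f_k(B(z_0,r))$ follows for every $k>m$. The main subtlety is not in the estimates but in the appeal to the several-variable degree (equivalently, a multidimensional Rouch\'e) argument, which in the case $n=1$ collapses to the classical Hurwitz theorem; one could alternatively phrase the whole proof via Rouch\'e's theorem in $\C^n$ to avoid explicit use of topological degree.
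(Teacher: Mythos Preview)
Your proof is correct and follows essentially the same route as the paper: the authors set $\eta=\dist\bigl(f(\overline{B(z_0,s)}),f(\partial B(z_0,r))\bigr)$, choose $m$ so that $\|f_k-f\|<\eta$ on $\partial B(z_0,r)$, and then invoke the multidimensional Rouch\'e theorem directly (citing Lloyd, \emph{Degree theory}) rather than unwinding it into a Brouwer-degree homotopy argument as you do. Your introduction of the intermediate radius $s'$ and the appeal to Osgood's theorem to compute the degree as $+1$ are harmless extra steps that the paper avoids by using Rouch\'e in the form ``same number of zeros counted with multiplicity''.
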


\begin{proof}
Let $K=f\left(\overline{B(z_0,s)}\right)$, $\gamma=\partial B(z_0,r)$ and $\Gamma=f(\gamma).$ Then $K\cap \Gamma=\varnothing$ since $f$ is univalent on $U$.

Let $\eta$ be the Euclidean distance between $\Gamma$ and $K$. Then $\eta>0$ and $$\eta=\min \{\|f(z)-w\|:\ w\in K,\|z-z_0\|=r\}.$$
If $u_0\in K$ then $\|f(z)-u_0\|\geq \eta$ for $z\in\gamma$, and since $f_k\rightarrow f$ uniformly on $\gamma$ there exists $m>0$ such that if $k\geq m$  and $z\in \gamma$  then  $$\|f(z)-f_k(z)\|<\|f(z)-u_0\|.$$

Rouch\'e theorem in several complex variables (see \cite[Theorem 9.3.4]{L2})  yields then that $f_k(z)-u_0$ and $f(z)-u_0$ have the same number of zeros on $B(z_0,r)$ counting multiplicities. But $f(z)-u_0$ has a zero in $B(z_0,r)$ since $u_0\in K$, and thus $u_0\in f_k(B(z_0,r))$ for $k\geq m$. The constant $m>0$ does not depend on $u_0\in K$, hence we have the result.
\end{proof}
\begin{corollary}\label{corollary_local1-1}
Let $U\subset \mathbb{C}^n$ be an open set.  Let $(f_k)$ be a sequence of univalent mappings $f_k\colon U\rightarrow \mathbb{C}^n$ converging uniformly on compacta to a univalent mapping $f$. Then any compact set $K\subset f(U)$ is eventually contained in $f_k(U)$.
\end{corollary}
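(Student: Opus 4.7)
The plan is to deduce the corollary from Proposition \ref{local1-1} by a routine compactness argument. For each point of $K$, Proposition \ref{local1-1} will provide a neighborhood that is eventually contained in $f_k(U)$; the task is then to extract a finite subcover.

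First, I would record the preliminary fact that $f\colon U\to\mathbb{C}^n$, being univalent and holomorphic, is a homeomorphism onto the open set $f(U)$. This follows either from Brouwer's invariance of domain applied to the continuous injection $f$, or from the classical theorem that an injective holomorphic map between equidimensional complex manifolds has nowhere-vanishing Jacobian and is therefore a biholomorphism onto its image. Consequently the preimage $L:=f^{-1}(K)$ is a compact subset of $U$.

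Next, for each $z\in L$ I would pick radii $0<s_z<r_z$ with $\overline{B(z,r_z)}\subset U$. The open balls $B(z,s_z)$, $z\in L$, form an open cover of the compact set $L$, so finitely many of them, say $B(z_1,s_1),\dots,B(z_N,s_N)$, already cover $L$. For each $i=1,\dots,N$ Proposition \ref{local1-1} applied to the triple $(z_i,s_i,r_i)$ yields an index $m_i$ such that
\[
  f(B(z_i,s_i))\subset f_k(B(z_i,r_i))\subset f_k(U)\qquad\text{for all } k\geq m_i.
\]
Setting $m:=\max_{1\leq i\leq N}m_i$, for every $k\geq m$ one then has
\[
  K \;\subset\; f(L) \;\subset\; \bigcup_{i=1}^{N} f(B(z_i,s_i)) \;\subset\; f_k(U),
\]
which is exactly the assertion.

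There is essentially no obstacle, as the argument is a standard finite-cover reduction once Proposition \ref{local1-1} is in hand. The only slightly delicate ingredient is the compactness of $L=f^{-1}(K)$, which relies on knowing that $f$ is an open map from $U$ onto $f(U)$; I would cite invariance of domain (or the nowhere-vanishing Jacobian theorem for injective holomorphic maps) for this point.
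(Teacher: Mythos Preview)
Your proof is correct and follows essentially the same approach as the paper: a finite-cover reduction to Proposition~\ref{local1-1}. The only cosmetic difference is that the paper covers $K$ directly by the open sets $f(B(z,s))$ (with $B(z,s)\subset\subset U$) and extracts a finite subcover of $K$, whereas you first pull $K$ back to $L=f^{-1}(K)$ and cover $L$; both variants rely on the same fact that $f$ is an open map, and both conclude identically by applying Proposition~\ref{local1-1} to each of the finitely many balls.
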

\begin{proof}
All the balls $B(z,s)\subset\subset U$ give an open covering of $U$. Since $K$ is compact there is a finite number of balls $B(z_i,s_i)\subset\subset U$  such that $K\subset \bigcup_i f(B(z_i,s_i)),$ hence Proposition \ref{local1-1} yields the result.
\end{proof}

\begin{definition}
Let $(\Omega_k)$ be a sequence of open subsets of a manifold $M$. The  {\sl kernel}   $\Omega$ is the biggest open set such that for all compact sets $K\subset \Omega$ there exists $m=m(K)$ such that if $k\geq m$ then $K\subset \Omega_k$. We say that the sequence $(\Omega_k)$ {\sl kernel converges} to $\Omega$ (denoted $\Omega_k\to \Omega)$ if every subsequence of $(\Omega_k)$ has the same kernel $\Omega$.
\end{definition}

Note that by the very definition the kernel is an open set,
possibly empty. It might be empty as the following example
shows:

\begin{example}
Let $M=\C$ and $f_k:\D\to \C$ defined by $f_k(z)=\frac{1}{k}z$.
Then $(f_k)$ is a sequence of univalent mappings
converging uniformly on compacta to $0$, and $f_k(\D)\to\varnothing.$
\end{example}

We have the following result. Another version of the kernel
convergence theorem in $\mathbb{C}^n$ may be found in
\cite{DGHK}.

\begin{theorem}\label{kernel}[Kernel convergence]
Let $(f_k)$ be a sequence of univalent mappings
from a complete hyperbolic complex manifold $M$ to a complex manifold $N$ of the same dimension.
Suppose that $(f_k)$ converges uniformly on compacta to a univalent mapping $f$. Then $f(M)$ is a connected component of the kernel $\Omega$ of the sequence $(f_k(M))$, and $(f_k^{-1}|_{f(M)})$ converges uniformly on compacta to $f^{-1}|_{f(M)}$. In particular if $\Omega$ is connected then $(f_k(M))\to \Omega.$
\end{theorem}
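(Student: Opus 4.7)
The strategy is to reduce the statement to the local Euclidean version (Corollary \ref{corollary_local1-1} and Proposition \ref{local1-1}) via holomorphic charts, and then invoke tautness of $M$ to argue normal-family convergence of the inverses. I will establish (i) $f(M) \subseteq \Omega$; (ii) $f_k^{-1} \to f^{-1}$ locally uniformly on $f(M)$; and (iii) $f(M)$ is relatively closed in $\Omega$. Together with the openness of $f(M)$ (immediate from the inverse function theorem, since $\dim M = \dim N$ and $f$ is univalent), (iii) makes $f(M)$ a single connected component of $\Omega$.

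For (i), given compact $K \subset f(M)$, for each $p \in K$ pick holomorphic charts $\phi_p$ on $M$ near $f^{-1}(p)$ and $\psi_p$ on $N$ near $p$ small enough that $f$, and eventually $f_k$ by uniform convergence on compacta, maps the $M$-chart into the $N$-chart. In these coordinates $\psi_p \circ f_k \circ \phi_p^{-1}$ are univalent maps into $\mathbb{C}^n$ converging locally uniformly to $\psi_p \circ f \circ \phi_p^{-1}$, so Corollary \ref{corollary_local1-1} yields a neighborhood of $p$ eventually contained in $f_k(M)$; a finite cover of $K$ concludes. For (ii), in the same charts Proposition \ref{local1-1} produces balls $B(\phi(z_0),s) \subset\subset B(\phi(z_0),r)$ with $\widetilde{f}(B(\phi(z_0),s)) \subset \widetilde{f}_k(B(\phi(z_0),r))$ for $k$ large. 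The local inverses $\widetilde{f}_k^{-1}$ are then defined on $\widetilde{f}(B(\phi(z_0),s))$ with values in the bounded ball $B(\phi(z_0),r)$, so Montel yields a convergent subsequence; passing to the limit in $\widetilde{f}_k \circ \widetilde{f}_k^{-1} = \id$ forces the limit to be $\widetilde{f}^{-1}$, and uniqueness of subsequential limits gives the full convergence, which propagates to compacta of $f(M)$ by finite covering.

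For (iii), let $p \in \overline{f(M)} \cap \Omega$ and pick an open connected $V \ni p$ with $\overline{V}$ compact in $\Omega$; by definition of the kernel, $\overline{V} \subset f_k(M)$ for $k$ large, so $g_k := f_k^{-1}|_V \in \hol(V, M)$ is well-defined. Since $M$ is taut (it is complete hyperbolic), $\{g_k\}$ is a normal family. Picking $q \in V \cap f(M)$ (possible because $p \in \overline{f(M)}$), step (ii) gives $g_k(q) \to f^{-1}(q) \in M$, so $\{g_k\}$ is not compactly divergent; hence a subsequence $g_{k_j}$ converges locally uniformly to some holomorphic $g \colon V \to M$. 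Taking the limit in $f_{k_j}(g_{k_j}(w)) = w$, using that $g_{k_j}(w)$ eventually lies in a compact of $M$ on which $f_{k_j} \to f$ uniformly, yields $f(g(w)) = w$ for all $w \in V$, so $V \subset f(M)$ and $p \in f(M)$. Since $f(M)$ is connected (image of connected $M$) and clopen in $\Omega$, it is a connected component.

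For the ``in particular'' claim, if $\Omega$ is connected then $\Omega = f(M)$. Any subsequence $(f_{k_j})$ still converges uniformly on compacta to the same univalent $f$, so steps (i)--(iii) applied to it show $f(M)$ is a connected component of its kernel $\Omega'$; moreover $\Omega \subseteq \Omega'$ by monotonicity of kernels under subsequences. A point $p \in \Omega' \setminus \Omega$ would sit in a component of $\Omega'$ disjoint from $f(M)$, and rerunning the tautness argument of (iii) for the subsequence near $p$ yields either convergence (forcing $p \in f(M)$, contradiction) or a compactly divergent subsequence of $g_{k_j}$ that one rules out by combining completeness of $M$ with $f_{k_j} \to f$ on compacta. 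The main technical obstacle I anticipate is step (iii): passing from the Euclidean local lemmas to the manifold statement requires carefully exploiting tautness of $M$ and using step (ii) to exclude compact divergence at interior points; this is the genuinely new ingredient of the argument.
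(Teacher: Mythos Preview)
Your argument for the main assertions is correct and follows essentially the same route as the paper: both reduce the inclusion $f(M)\subseteq\Omega$ to Corollary~\ref{corollary_local1-1} via charts, and both exploit tautness of $M$ (from complete hyperbolicity) to control the inverses. The only organizational difference is that you split the inverse argument into a local Montel step (ii) on $f(M)$ followed by a closedness step (iii), whereas the paper works directly on the connected component $\Omega_0\supseteq f(M)$ of the kernel: it rules out compact divergence of $(f_k^{-1}|_{\Omega_0})$ using the single observation $k_M(f_k^{-1}(f_k(z_0)),z_0)=0$, extracts a limit $g\colon\Omega_0\to M$, and then $f\circ g=\id$ simultaneously gives $\Omega_0=f(M)$ and $g=f^{-1}$. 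Your (ii) is therefore a bit redundant (the tautness argument in (iii) already covers it), but this is harmless.

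There is, however, a genuine gap in your treatment of the ``in particular'' clause. When $p$ lies in a component of the subsequential kernel $\Omega'$ disjoint from $f(M)$, you have no anchor point $q\in f(M)\cap V$ to invoke as in (iii); you assert that the compactly divergent alternative ``one rules out by combining completeness of $M$ with $f_{k_j}\to f$ on compacta,'' but this does not follow. If $z_j:=f_{k_j}^{-1}(p)$ escapes every compact subset of $M$, the identity $f_{k_j}(z_j)=p$ yields no contradiction, precisely because uniform convergence $f_{k_j}\to f$ is only available on compacta that $z_j$ eventually leaves. The paper itself does not argue this clause in any detail, so you are not worse off than the source here; but your sketch suggests an argument that does not actually close, and you should either supply the missing idea or, as the paper does, leave the clause as a remark rather than claim a proof.
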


\begin{proof}
Let $K\subset f(M)$ be a compact set. We want to prove that eventually $K\subset f_k(M)$.
Let $\mathcal{U}=\{U_\alpha\}$  be an open covering of $M$ such that any $U_\alpha$ is biholomorphic to $\mathbb{B}^n$, and let $\mathcal{H}$ be the open covering of $M$ given by all open subsets $H$ satisfying the following property: there exists $U_\alpha\in \mathcal{U} $ such that $H\subset\subset U_\alpha$
(notice that $f(H)$ is then relatively compact in some coordinate chart of $N$).  Note that $f$ is an open mapping since $M$ and $N$ have the same dimension, thus $f_*\mathcal{U}=\{f(U_\alpha)\}_{U_\alpha\in\mathcal{U}}$ is an open covering of $f(M).$

 Since $K$ is compact there exist a finite number  of  open subsets $H_i\in\mathcal{H}$  such that $K\subset \bigcup_i f(H_i)$. Note that  on  $H_i$ the sequence $f_k$ takes eventually values  in some $f(U_{\alpha_i})$ thanks to uniform convergence on compacta.
By using a partition of unity it is easy to see that there exist a finite number of compact sets $K_i$ such that $K_i\subset f(H_i)$ and $K=\bigcup_i K_i$. Thus we can assume $M\subset \mathbb{C}^n$ and $N=\mathbb{C}^n$, and the claim follows from Corollary \ref{corollary_local1-1}.

Thus $f(M)$ is a subset of the kernel $\Omega$ of the sequence $(f_k(M))$. This implies that on  any compact set $K\subset f(M)$ the sequence $f_k^{-1}\colon K\rightarrow M$ is eventually defined. Let $\Omega_0$ be the connected component of the kernel which contains $f(M).$  We want to prove that $(f_k^{-1}|_{\Omega_0})$ admits a subsequence converging uniformly on compacta. Assume that $(f_k^{-1}|_{\Omega_0})$ is compactly divergent. Since $M$ is complete hyperbolic, this is equivalent to assume that for all fixed $z_0\in M$ and compact sets $K\subset{\Omega_0}$ we have
\begin{equation}\label{liminfmin}
\liminf_{k\rightarrow \infty}\left(\min_{w\in K}k_M(f^{-1}_k(w),z_0)\right)=+\infty.
\end{equation}
Let $j\geq 0$ and let $$K(j):=\{f(z_0)\}\cup\bigcup_{k\geq j}\{f_k(z_0)\}.$$ Since $f_k(z_0)\rightarrow f(z_0)$, the set $K(j)$ is compact. Since $f(M)$ is open there exists $m>0$ such that $K(m)\subset f(M)\subset {\Omega_0}$. But $$k_M\left(f_k^{-1}(f_k(z_0)),z_0\right)=0,$$ in contradiction with (\ref{liminfmin}).

Let $(f^{-1}_{k_i}|_{\Omega_0})$ be a converging subsequence and let $g:\Omega_0 \to M$ be its limit.
Let $w_0\in {\Omega_0}$.
The sequence $(f^{-1}_{k_i}(w_0))$ is eventually defined and converging to some $z=g(w_0)$. Thus $w_0=f_{k_i}(f^{-1}_{k_i}(w_0))\rightarrow f(z)$, which implies that ${\Omega_0}=f(M)$ and that $g(w_0)=f^{-1}(w_0)$, hence $(f^{-1}_k|_{\Omega_0})$ converges to $f^{-1}|_{\Omega_0}$ uniformly on compacta.
\end{proof}

The condition that the sets are open is important, as the
following example shows:

\begin{example}
Let $\D:=\{\zeta \in \C : |\zeta|<1\}$. Let $f_k : \D \to \C^2$
be defined by $f_k(\zeta):=(\zeta, \frac{1}{k} \zeta)$.  Then
$(f_k)$ is a sequence of univalent discs which
converges uniformly on compacta to the injective disc
$\zeta\mapsto (\zeta, 0)$.
The only compact set in $\mathbb{C}^2$ which is eventually contained in
$f_k(\D)$  is $\{0\}$.
\end{example}

\section{Loewner chains on complex manifolds}

As we will show in what follows, some properties of Loewner chains are related only to the algebraic properties of evolution family and not to $L^d$ regularity. Hence, it is natural to introduce the following:

\begin{definition}\label{algebraic_EF}
Let $M$ be a complex manifold.
An {\sl algebraic evolution family} is a family $(\v_{s,t})_{0\leq s\leq t}$ of univalent self-mappings of $M$  satisfying the  evolution property (\ref{evolution_property}).
\end{definition}

Thanks to Proposition \ref{ev-univ}, an $L^d$-evolution family is an algebraic evolution family ({\sl i.e.}, it is univalent).

\begin{definition}
Let $M, N$ be two complex manifolds of the same dimension.  A family  of holomorphic mappings $(f_t: M\to N)_{t\geq 0}$ is a {\sl
subordination chain}  if for each $0\leq s\leq t$ there
exists a holomorphic mapping $v_{s,t}:M\to M$ such that
$f_s=f_t\circ v_{s,t}$.
A subordination chain $(f_t)$ and an algebraic evolution family $(\v_{s,t})$ are {\sl associated} if
$$ f_s=f_t\circ \v_{s,t},\quad 0\leq s\leq t.$$

An {\sl algebraic Loewner chain} is a  subordination chain such that each mapping $f_t: M\to N$ is univalent.   The {\sl range}  of an algebraic  Loewner chain is defined as $\rg(f_t):=\bigcup_{t\geq 0}f_t(M).$
An algebraic Loewner chain $(f_t\colon M\to N)$ is {\sl surjective} if $\rg(f_t)=N$.
\end{definition}
\begin{remark}
Equivalently an algebraic Loewner chain can be defined as a  family  of univalent mappings $(f_t: M\to N)_{t\geq 0}$ such that $$f_s(M)\subset f_t(M),\quad 0\leq s\leq t.$$
\end{remark}

\begin{definition}
Let $d\in [1,+\infty]$. Let $M, N$ be two complex manifolds of
the same dimension. Let $d_N$ be the distance induced by a Hermitian metric on $N$. An algebraic Loewner chain $(f_t\colon M\to N)$  is a {\sl Loewner chain of order} $d\in [1,+\infty]$ (or $L^d$-Loewner chain) if for any compact set $K\subset\subset M$ and any
  $T>0$ there exists a $k_{K,T}\in L^d([0,T], \R^+)$ such that
 \begin{equation}\label{LCdef}
d_N(f_s(z), f_t(z))\leq \int_{s}^t k_{K,T}(\xi)d\xi
 \end{equation}
for all $z\in K$ and for all $0\leq s\leq t\leq T$.
\end{definition}

\begin{remark}\label{conti}
By (\ref{LCdef}) the mapping $t\mapsto f_t$ is continuous from $\mathbb{R}^+$ to $\hol(M,N)$. Hence the mapping
 $\Psi\colon  M\times \mathbb{R}^+\rightarrow N$ defined as $\Psi(z,t)=f_t(z)$
is jointly continuous.
\end{remark}

\begin{theorem}
\label{Low-to-ev}
Let $d\in [1,+\infty]$. Let $(f_t\colon M\to N)$ be an $L^d$-Loewner
chain. Assume that $M$ is complete hyperbolic. Let
$$\v_{s,t}:= f_t^{-1} \circ f_s,\quad 0\leq s\leq t.$$
Then for any Hermitian metric on $M$, the family $(\v_{s,t})$ is an $L^d$-evolution family on $M$ associated with $(f_t)$.
\end{theorem}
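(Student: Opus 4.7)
First, each $\v_{s,t}:=f_t^{-1}\circ f_s$ is a well-defined univalent holomorphic self-map of $M$: LC2 gives $f_s(M)\subset f_t(M)$ and LC1 makes $f_t$ a biholomorphism onto its image. The identities $\v_{s,s}=\id$, $\v_{u,t}\circ\v_{s,u}=\v_{s,t}$, and the association $f_s=f_t\circ\v_{s,t}$ are then immediate formal consequences of the definition.

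Second, the plan is to prove joint continuity of $(s,t,z)\mapsto\v_{s,t}(z)$ on $\Delta\times M$. This combines Remark \ref{conti}, which gives joint continuity of $(s,z)\mapsto f_s(z)$, with the kernel convergence theorem (Theorem \ref{kernel}), ensuring that $f_{t_n}^{-1}\to f_{t_0}^{-1}$ uniformly on compacta of $f_{t_0}(M)$ whenever $t_n\to t_0$. Writing $\v_{s_n,t_n}(z_n)=f_{t_n}^{-1}(f_{s_n}(z_n))$ and using that $f_{s_n}(z_n)$ converges in $f_{t_0}(M)$, one obtains the desired joint continuity. Consequently, for fixed $K\subset\subset M$ and $T>0$, the set
$$K_*:=\{\v_{s,u}(z):z\in K,\ 0\le s\le u\le T\}$$
is relatively compact in $M$.

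Finally, for the $L^d$ estimate, fix a Hermitian metric on $M$, take $z\in K$ and $0\le s\le u\le t\le T$, and set $w:=\v_{s,u}(z)\in K_*$. Since $\v_{s,t}(z)=\v_{u,t}(w)$, one has
$$d_M(\v_{s,t}(z),\v_{s,u}(z))=d_M\bigl(f_t^{-1}(f_u(w)),\,f_t^{-1}(f_t(w))\bigr).$$
I plan to transfer this to an estimate in $N$ via a uniform local Lipschitz bound on the family $\{f_t^{-1}\}_{t\in[0,T]}$: the map $(t,z)\mapsto df_t|_z$ is jointly continuous on $[0,T]\times M$ (joint continuity of $f_t$ together with Cauchy estimates), each $df_t|_z$ is invertible because $f_t$ is univalent between equidimensional complex manifolds, and compactness of $[0,T]\times K_*$ yields a uniform upper bound on $\|(df_t|_z)^{-1}\|$. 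A finite cover of a compact neighborhood of $\bigcup_{t\in[0,T]}f_t(K_*)$ in $N$ by charts then provides constants $C,\epsilon>0$ such that $d_M(f_t^{-1}(p),f_t^{-1}(q))\le Cd_N(p,q)$ whenever $p,q\in f_t(M)$ lie in a common chart and within $d_N$-distance $\epsilon$. When $d_N(f_u(w),f_t(w))$ exceeds this tolerance, subdividing $[u,t]$ into subintervals of sufficiently small mesh (using absolute continuity of $\xi\mapsto\int_0^\xi k_{K_*,T}(\eta)d\eta$, valid for $k_{K_*,T}\in L^d$ with $d\ge 1$), telescoping via the triangle inequality, and applying LC3 yields
$$d_M(\v_{s,t}(z),\v_{s,u}(z))\le C\int_u^t k_{K_*,T}(\xi)d\xi,$$
the required $L^d$-bound with multiplier $Ck_{K_*,T}\in L^d([0,T])$, as Definition \ref{L^d_EF} demands. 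The main obstacle is establishing the uniformity of the Lipschitz bound on $\{f_t^{-1}\}_{t\in[0,T]}$; once it is in place, the subdivision and telescoping steps are routine.
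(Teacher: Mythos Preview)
Your plan mirrors the paper's proof: the algebraic identities are formal, kernel convergence (Theorem~\ref{kernel}) yields joint continuity of $(s,t)\mapsto\v_{s,t}$ and hence compactness of $K_*$ (the paper's $\hat K$), and the $L^d$-estimate reduces to a uniform Lipschitz bound on $\{f_t^{-1}\}_{t\in[0,T]}$ combined with LC3.

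One wrinkle in your version: you bound $\|(df_t|_z)^{-1}\|$ only for $z\in K_*$, but in the telescoping the preimages $f_t^{-1}(f_{r_i}(w))=\v_{r_i,t}(w)$ with $w\in K_*$ need not lie in $K_*$, so the differential bound does not directly give the Lipschitz inequality you claim on the compact neighborhood of $\bigcup_{\tau\in[0,T]}f_\tau(K_*)$. The fix is easy---enlarge the compact set once more to $\overline{\bigcup_{0\le a\le b\le T}\v_{a,b}(K_*)}$ (still compact by the joint continuity you already proved) before bounding differentials---but it should be made explicit. The paper sidesteps both this issue and the subdivision step by working on the $N$-side: it proves directly, by contradiction, that the \emph{global} Lipschitz constant of $f_t^{-1}$ on the compact set $\bigcup_{s\le t}f_s(\hat K)\subset N$ is bounded uniformly in $t\in[0,T]$ (a blow-up sequence would have to concentrate at a single point, where kernel convergence $f_{t_n}^{-1}\to f_t^{-1}$ and Cauchy estimates force equi-Lipschitzness). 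With that global bound in hand, a single application of LC3 finishes without any telescoping.
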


\begin{proof}
It is clear that $(\v_{s,t})$ is an algebraic evolution
family, so that we only need to prove the $L^d$-estimate.

Let $H$ be a compact subset of $f_t(M)$. Set $$L(H,t):=\sup_{\eta,\zeta\in H,\eta\neq\zeta}\frac{d_M(f_t^{-1}(\zeta),f_t^{-1}(\eta))}{d_N(\zeta,\eta)}.$$
Then $L(H,t) <+\infty$,  since $w\mapsto f_t^{-1}(w)$ is locally Lipschitz.

Given a compact subset $K\subset M $  define  $$K_t:= \bigcup_{s\in[0,t]}f_s(K).$$ The set $K_t$ is a compact subset of $f_t(M)$ by Remark \ref{conti} since $K_t=\Psi(K\times [0,t])$.
Let $T>0$ be fixed.
We claim that the function $L(K_t,t)$ on $0\leq t\leq T$ is bounded by a constant $C(K,T)<+\infty$. Assume  that  $L(K_t,t)$ is unbounded. Then there exists a sequence $(t_n)\subset [0,T]$, which we might assume converging to some $t\in [0,T]$, such that $$L(K_{t_n},t_n)\geq n+1,\quad \forall n\geq 0.$$
Hence for any $n\geq 0$ there exist $\zeta_n,\eta_n\in K_{t_n}$ such that $\zeta_n\neq\eta_n$ and
\begin{equation}\label{incr}
\frac{d_M(f_{t_n}^{-1}(\zeta_{n}),f_{t_n}^{-1}(\eta_{n}))}{d_N(\zeta_{n},\eta_{n})}\geq n.
\end{equation}
By passing to a subsequence we may assume that
$\zeta_n \to \zeta\in K_t$ and $\eta_n \to \eta \in K_t$.
By Theorem \ref{kernel}, $f_{t_n}^{-1}\to f_t^{-1}$ uniformly on a neighborhood of $K_t$.
By (\ref{incr}) we have $\eta=\zeta$, since otherwise $$\frac{d_M(f_{t_n}^{-1}(\zeta_{n}),f_{t_n}^{-1}(\eta_{n}))}{d_N(\zeta_{n},\eta_{n})}\rightarrow \frac{d_M(f_t^{-1}(\zeta),f_t^{-1}(\eta))}{d_N(\zeta,\eta)}.$$

Let $U,V$ be two open subsets of $f_t(M)$, both biholomorphic to $\mathbb{B}^n$ such that $\zeta\in U\subset\subset V\subset\subset f_t(M).$
Since by Theorem \ref{kernel} the sequence $(f_{t_n}^{-1})$ converges to $f_t^{-1}$ uniformly on $V$, we have that eventually $ f_{t_n}^{-1}(U)\subset f^{-1}_t(V)$. The sequence $(f_{t_n}^{-1}|_U)$ is thus equibounded and by Cauchy estimates it is equi-Lipschitz in a neighborhood of $\zeta$, which  contradicts (\ref{incr}) and thus proves the claim.

Let $\Delta_T:=\{(s,t):0\leq s\leq t\leq T\}$. Then the mapping $(s,t)\mapsto f^{-1}_t\circ f_s$ from $\Delta_T$ to $\hol(M,M)$ endowed with the topology of uniform convergence on compacta is continuous. Indeed, let $(s_n,t_n)\rightarrow (s,t)$. Let $K\subset M $ be a compact set. By Remark \ref{conti} the set $$K(j):=  f_s(K)\cup\bigcup_{n\geq j}f_{s_n}(K)=\Psi (K,\{s\}\cup\bigcup_{n\geq j}\{s_n\})$$ is compact.  There exists $m>0$ such that $K(m)\subset f_t(M)$. By Theorem \ref{kernel} the sequence $(f_{t_n}^{-1})$ converges to $f_t^{-1}$ uniformly on $K(m)$. This proves that  $(s,t)\mapsto f^{-1}_t\circ f_s$ is continuous.

This implies that the mapping $\Phi\colon M\times \Delta_T\rightarrow M$ defined as $\Phi(z,s,t):=\v_{s,t}(z)$ is jointly continuous. Hence if $K\subset M$ is a compact set,  $$\hat{K}:= \bigcup_{0\leq a\leq b\leq T} \v_{a,b}(K)=\bigcup_{0\leq a\leq b\leq T} f_b^{-1}(f_a(K))$$ is compact in $M$. Therefore, since $$d_M(\v_{s,u}(z),\v_{s,t}(z))=d_M(\v_{s,u}(z),\v_{u,t}(\v_{s,u}(z))),$$ in order to estimate $d_M(\v_{s,u}(z),\v_{s,t}(z))$ for $z\in K$ and $0\leq s\leq u\leq t\leq T$ it is enough to estimate $d_M(\zeta,\v_{u,t}(\zeta))$ for $\zeta\in \hat{K}.$

Since
\begin{align}
d_M(\zeta,\v_{u,t}(\zeta))&=d_M(f_t^{-1}(f_t(\zeta)),f_t^{-1}(f_u(\zeta)))\leq L(\hat{K}_t,t)d_N(f_t(\zeta),f_u(\zeta)) \nonumber\\
&\leq C(\hat{K},T)d_N(f_t(\zeta),f_u(\zeta))\leq C(\hat{K},T)\int_u^tk_{\hat{K},T}(\xi)d\xi,\nonumber
\end{align}
we  are done.
\end{proof}

\begin{theorem}\label{algebraic_theorem}
Any  algebraic evolution family $(\v_{s,t})$ admits  an associated algebraic Loewner chain $(f_t\colon M\to N)$.
Moreover if $(g_t\colon M\to Q)$ is a subordination chain associated with $(\v_{s,t})$ then there exist a holomorphic mapping $\Lambda\colon \rg(f_t)\to Q$ such that $$g_t=\Lambda\circ f_t,\quad \forall t\geq 0.$$ The mapping $\Lambda$ is univalent if and only if $(g_t)$ is an algebraic Loewner chain, and in that case $\rg(g_t)=\Lambda(\rg(f_t)).$
\end{theorem}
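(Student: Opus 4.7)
The plan is to realize $N$ as the direct limit of $(M, \v_{s,t})$, following the scheme indicated in the introduction. On $M \times \mathbb{R}^+$ I would define
\[
(x,s) \sim (y,t) \iff \exists\, u \geq \max(s,t) \text{ with } \v_{s,u}(x) = \v_{t,u}(y),
\]
which is reflexive and symmetric by inspection and transitive by choosing a common upper bound and applying $\v_{r,u} = \v_{v,u} \circ \v_{r,v}$. Set $N := (M \times \mathbb{R}^+)/\!\sim$, let $\pi$ be the quotient map, and put $f_t(x) := \pi(x,t)$. The evolution property gives $(x,s) \sim (\v_{s,t}(x),t)$ for $s \leq t$, so $f_s = f_t \circ \v_{s,t}$ is automatic. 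Each $f_t$ is injective: if $f_t(x) = f_t(y)$ then $\v_{t,u}(x) = \v_{t,u}(y)$ for some $u \geq t$, and univalence of $\v_{t,u}$ (Definition \ref{algebraic_EF}) forces $x = y$. The same argument shows that for $s \leq t$ the equality $f_s(y) = f_t(x)$ holds if and only if $x = \v_{s,t}(y)$.

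The main obstacle is equipping $N$ with a complex manifold structure in which every $f_t$ is a biholomorphism onto an open subset. I would give $N$ the finest topology making each $f_t$ continuous and take the $f_t^{-1}$ as local charts. Cocycle compatibility is immediate, since for $s \leq t$ the transition $f_t^{-1} \circ f_s$ equals $\v_{s,t}$, which is holomorphic. Openness of $f_t(U)$ for $U \subset M$ open reduces to checking that $f_r^{-1}(f_t(U))$ is open for every $r$, and the injectivity analysis above yields $f_r^{-1}(f_t(U)) = \v_{t,r}(U)$ for $r \geq t$ (open by invariance of domain, since $\v_{t,r}$ is a holomorphic injection between equidimensional manifolds) and $f_r^{-1}(f_t(U)) = \v_{r,t}^{-1}(U)$ for $r \leq t$ (open by continuity). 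For Hausdorffness, after replacing $s$ by $t$ via $f_s = f_t \circ \v_{s,t}$, two distinct points take the form $f_t(x), f_t(y)$ with $x \neq y$, and disjoint open neighborhoods of $x, y$ produce disjoint open neighborhoods of their images by the openness just verified. Connectedness of $N$ and the inclusion $f_s(M) \subset f_t(M)$ for $s \leq t$ follow from $f_s = f_t \circ \v_{s,t}$, so $(f_t)$ is an algebraic Loewner chain associated with $(\v_{s,t})$.

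For the universal property, given a subordination chain $(g_t : M \to Q)$ with $g_s = g_t \circ \v_{s,t}$, I would define $\Lambda : \rg(f_t) \to Q$ by $\Lambda(f_t(x)) := g_t(x)$. Well-definedness follows from the injectivity analysis: if $f_s(y) = f_t(x)$ with $s \leq t$ then $x = \v_{s,t}(y)$, whence $g_t(x) = g_t(\v_{s,t}(y)) = g_s(y)$. On each chart $f_t(M)$ the formula $\Lambda = g_t \circ f_t^{-1}$ shows $\Lambda$ is holomorphic. If each $g_t$ is univalent, the same local formula makes $\Lambda$ locally injective, and for global injectivity $\Lambda(f_s(y)) = \Lambda(f_t(x))$ with $s \leq t$ gives $g_s(y) = g_t(x)$; combining with $g_s = g_t \circ \v_{s,t}$ and univalence of $g_t$ forces $x = \v_{s,t}(y)$, hence $f_s(y) = f_t(x)$. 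Conversely, if $\Lambda$ is univalent then each $g_t = \Lambda \circ f_t$ is a composition of two univalent maps, so $(g_t)$ is an algebraic Loewner chain. Finally $\rg(g_t) = \bigcup_t g_t(M) = \bigcup_t \Lambda(f_t(M)) = \Lambda(\rg(f_t))$.
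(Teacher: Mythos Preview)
Your argument is correct and follows essentially the same direct-limit construction as the paper: the same equivalence relation on $M\times\R^+$, the same quotient $N$, the same charts $f_t^{-1}$ with transition maps $\v_{s,t}$, and the same definition of $\Lambda$ via the universal property. You supply somewhat more detail than the paper does on openness of the $f_t$ and on the biconditional for univalence of $\Lambda$; the only point you pass over in silence is second countability of $N$, which the paper handles by noting $N=\bigcup_{k\in\N}f_k(M)$.
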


\begin{proof}
Define an equivalence relation on the product $M\times \mathbb{R}^+$:
$$(x,s)\sim (y,t)\quad\mbox{iff}\quad   \v_{s,u}(x)=\v_{t,u}(y) \mbox{ for $u$ large enough}.$$
and define $N:=(M\times \mathbb{R}^+)/_\sim$.
Let $\pi\colon M\times \mathbb{R}^+\to N$ be the projection on the quotient, and let  $i_t\colon M\to M\times \mathbb{R}^+$ be the injection $i_t(x)=(x,t)$. Define a family of mappings $(f_t\colon M\to N)$ as $$f_t:= \pi\circ i_t,\quad t\geq 0.$$ Each mapping $f_t$ is injective since ${\pi}|_{M\times \{t\}}$ is injective, and by construction the family $(f_t)$ satisfies$$f_s=f_t\circ\v_{s,t}, \quad 0\leq s\leq t.$$
Thus we have  $f_s(M)\subset f_t(M)$ for $0\leq s\leq t$ and $N=\bigcup_{t\geq 0}f_t(M)$.

Endow the product  $M\times \mathbb{R}^+$ with the product topology, considering on $\R^+$ the discrete topology. Endow $N$ with the quotient topology. Each mapping $f_t$ is continuous and open, hence it is an homeomorphism onto its image. This shows that $N$ is arcwise-connected and Hausdorff since each $f_t(M)$ is arcwise-connected and Hausdorff. Moreover $N$ is second countable since $N=\bigcup_{k\in \mathbb{N}}f_k(M)$.
Now define a complex structure on $N$ by considering the $M$-valued charts $(f^{-1}_t,f_t(M))$ for all $t\geq 0$. This charts are compatible since $f_t^{-1}\circ f_s=\v_{s,t}$ which is holomorphic. Hence  the family $(f_t)$ is an algebraic Loewner chain associate with $(\v_{s,t})$.

If $(g_t\colon M\to Q)$ is a subordination chain associated with $(\v_{s,t})$, then the map $\Psi\colon M\times \mathbb{R}^+\to Q$
$$(z,t)\mapsto g_t(z)$$  is compatible with the equivalence relation $\sim$. The map $\Psi$ passes thus to the quotient defining a holomorphic mapping $\Lambda\colon N\to Q$ such that $$g_t=\Lambda\circ f_t,\quad t\geq 0.$$ The last statement is easy to check.
\end{proof}
As a corollary we have the following.
\begin{corollary}\label{rangebihol}
Let $(\v_{s,t})$ be an algebraic evolution family on a  complex manifold
$M$. Also let $(f_t\colon M\to N)$ and $(g_t\colon M\to Q)$ be two algebraic Loewner chains
associated with $(\v_{s,t})$. Then there exists a
biholomorphism $\Lambda\colon \rg(f_t)\to \rg(g_t)$ such that $g_t=\Lambda \circ
f_t$  for all $t\geq 0$.
\end{corollary}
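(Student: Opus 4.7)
The plan is to observe that this corollary is a direct consequence of the last paragraph of Theorem \ref{algebraic_theorem}, applied in both directions.

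First I would invoke Theorem \ref{algebraic_theorem} taking $(f_t\colon M \to N)$ as the algebraic Loewner chain produced from $(\v_{s,t})$ and regarding the second chain $(g_t\colon M \to Q)$ as a subordination chain associated with $(\v_{s,t})$ (every algebraic Loewner chain is in particular a subordination chain, by definition). The theorem then yields a holomorphic map $\Lambda\colon \rg(f_t) \to Q$ satisfying $g_t = \Lambda \circ f_t$ for every $t \geq 0$. Because $(g_t)$ is by hypothesis an algebraic Loewner chain (i.e.\ each $g_t$ is univalent), the final sentence of Theorem \ref{algebraic_theorem} applies and gives two extra pieces of information for free: $\Lambda$ is univalent, and $\rg(g_t) = \Lambda(\rg(f_t))$. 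Consequently $\Lambda$ is a holomorphic bijection from $\rg(f_t)$ onto $\rg(g_t)$.

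To upgrade bijectivity to a biholomorphism I would show that $\Lambda^{-1}$ is holomorphic. The cleanest way is to swap the roles of the two chains and run Theorem \ref{algebraic_theorem} once more: treating $(g_t)$ as the ``first'' chain and $(f_t)$ as the associated subordination chain produces a holomorphic $\Lambda'\colon \rg(g_t) \to N$ with $f_t = \Lambda' \circ g_t$. Combining $g_t = \Lambda \circ f_t$ with $f_t = \Lambda' \circ g_t$ gives $\Lambda' \circ \Lambda \circ f_t = f_t$ on $M$ for every $t$, so $\Lambda' \circ \Lambda = \id$ on $\bigcup_t f_t(M) = \rg(f_t)$, and symmetrically $\Lambda \circ \Lambda' = \id$ on $\rg(g_t)$. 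Hence $\Lambda' = \Lambda^{-1}$ is holomorphic, and $\Lambda$ is the desired biholomorphism with $g_t = \Lambda \circ f_t$. (Alternatively, since $\Lambda$ is a univalent holomorphic map between complex manifolds of the same dimension $n$, the open mapping theorem immediately implies it is a biholomorphism onto its image.)

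There is no real obstacle here beyond bookkeeping: the entire construction, including the identification of $\rg(g_t)$ with $\Lambda(\rg(f_t))$, is packaged into Theorem \ref{algebraic_theorem}. The only point deserving a moment's care is the verification that the map $\Lambda$ produced in one direction is truly the inverse of the map produced in the other, which follows at once from the associativity relation $f_s = f_t \circ \v_{s,t}$ and the observation that the $f_t(M)$ cover $\rg(f_t)$.
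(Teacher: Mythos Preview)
Your approach is essentially the paper's: the corollary is stated there without proof, as an immediate consequence of Theorem~\ref{algebraic_theorem}. There is, however, one subtlety in your invocation of that theorem. The ``moreover'' clause of Theorem~\ref{algebraic_theorem} refers specifically to the direct-limit chain constructed in its proof, not to an arbitrary associated algebraic Loewner chain; the map $\Lambda$ is manufactured there by passing to the quotient $N=(M\times\R^+)/_\sim$, which only makes sense for that particular $N$. So you cannot literally place the \emph{given} chain $(f_t)$ from the corollary in the role of the first chain of Theorem~\ref{algebraic_theorem}, nor swap and place $(g_t)$ there.

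The fix is immediate and is what the paper implicitly intends: let $(h_t\colon M\to R)$ denote the direct-limit chain produced by Theorem~\ref{algebraic_theorem}, apply the theorem once with $(f_t)$ as the subordination chain and once with $(g_t)$, obtaining univalent holomorphic maps $\Lambda_f\colon R\to N$ and $\Lambda_g\colon R\to Q$ with $f_t=\Lambda_f\circ h_t$, $g_t=\Lambda_g\circ h_t$, $\rg(f_t)=\Lambda_f(R)$ and $\rg(g_t)=\Lambda_g(R)$. Then $\Lambda:=\Lambda_g\circ\Lambda_f^{-1}$ does the job. Your parenthetical observation---that a univalent holomorphic map between equidimensional complex manifolds is automatically a biholomorphism onto its image---is correct and renders the second ``swap'' application unnecessary once univalence of $\Lambda_f$ and $\Lambda_g$ is known.
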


Thus there exists essentially one algebraic Loewner chain associated with an algebraic evolution family.
\begin{definition}
Let $(\v_{s,t})$ be an algebraic evolution family. By Corollary \ref{rangebihol} the biholomorphism class of the range of an associated algebraic Loewner chain is uniquely determined. We call this class the {\sl Loewner range} of $(\v_{s,t})$ and we denote it by $\Lr(\v_{s,t})$.
\end{definition}

\begin{theorem}\label{ev-to-Low}
Let $d\in [1,+\infty]$.  Let $(\v_{s,t})$ be an $L^d$-evolution family on a taut  manifold $M$, and let $(f_t\colon M\to N)$ be an  associated algebraic Loewner chain. Then  $(f_t)$ is an $L^d$-Loewner chain for any Hermitian metric on $N$.
\end{theorem}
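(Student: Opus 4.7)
The plan is to transfer the $L^d$-regularity from the evolution family to the chain via the functional identity $f_s=f_u\circ\v_{s,u}$, combined with a uniform Lipschitz bound for the family $(f_u)_{u\in[0,T]}$ on a suitable compact subset of $M$. Fix $T>0$ and a compact $K\subset\subset M$, and introduce
\[
\hat K:=\bigcup_{0\le s\le u\le T}\v_{s,u}(K),
\]
which is compact by the joint continuity of $(z,s,u)\mapsto\v_{s,u}(z)$ furnished by Lemma \ref{jointly}. Note that for any fixed $s\in[0,T]$ and $z\in K$, the curve $t\mapsto\v_{s,t}(z)$, $t\in[s,T]$, takes values in $\hat K$.

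Next I would show that $u\mapsto f_u$ is continuous as a map from $[0,T]$ into $\hol(M,N)$ endowed with the compact-open topology: this is immediate from $f_u=f_T\circ\v_{u,T}$, the continuity of $u\mapsto\v_{u,T}$ (again Lemma \ref{jointly}), and the continuity of left-composition with the fixed holomorphic map $f_T$. Consequently $\{f_u:u\in[0,T]\}$ is compact in $\hol(M,N)$, so that for any compact neighbourhood $\hat K'\subset\subset M$ of $\hat K$ the set $\bigcup_{u\in[0,T]}f_u(\hat K')$ is relatively compact in $N$. Covering $\hat K'$ and this relevant compact region of $N$ by finitely many coordinate charts, Cauchy estimates applied to these $\C^n$-valued holomorphic maps yield a constant $L=L(\hat K,T)>0$ such that
\[
\|(df_u)_z\|\le L\qquad\text{for every } z\in\hat K \text{ and every } u\in[0,T],
\]
the operator norm being taken with respect to the given Hermitian metrics on $M$ and $N$.

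With this bound at hand, for $z\in K$ and $0\le s\le u\le T$ the relation $f_s=f_u\circ\v_{s,u}$ gives $d_N(f_s(z),f_u(z))=d_N(f_u(\v_{s,u}(z)),f_u(z))$, which I would estimate by the $d_N$-length of $f_u\circ\gamma$, where $\gamma\colon[s,u]\to\hat K$, $\gamma(t):=\v_{s,t}(z)$, joins $z$ to $\v_{s,u}(z)$. The $L^d$-bound for the evolution family makes $\gamma$ absolutely continuous with metric derivative bounded a.e.\ by $c_{T,K}(t)$, so $\gamma$ has $d_M$-length at most $\int_s^u c_{T,K}(\xi)\,d\xi$; combining this with the uniform differential bound yields
\[
d_N(f_s(z),f_u(z))\le L\int_s^u c_{T,K}(\xi)\,d\xi,
\]
so that $k_{K,T}(\xi):=L\cdot c_{T,K}(\xi)\in L^d([0,T],\R^+)$ is the desired majorant.

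The main obstacle I foresee is producing the uniform Lipschitz constant $L$: one must bootstrap the compactness of $\{f_u\}$ in $\hol(M,N)$ into a uniform Cauchy-type bound on the differentials $(df_u)_z$ for $z\in\hat K$, and then interpret that bound as a genuine Hermitian Lipschitz estimate valid simultaneously for all $u\in[0,T]$. Once this piece is in hand, the curve-length computation above is essentially formal and the $L^d$-estimate for $(f_t)$ falls out of the $L^d$-estimate for $(\v_{s,t})$ with only a multiplicative constant.
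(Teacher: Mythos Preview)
Your proposal is correct and follows essentially the same strategy as the paper: define the compact set $\hat K=\bigcup_{0\le s\le t\le T}\v_{s,t}(K)$ via Lemma~\ref{jointly}, exploit $f_u=f_T\circ\v_{u,T}$ to get uniform control of $(f_u)_{u\in[0,T]}$ on $\hat K$, and then feed in the $L^d$-bound \eqref{ck-evd}. The packaging differs slightly: the paper splits the Lipschitz estimate in two (a single Lipschitz constant for the fixed map $f_T$ on $\hat K$, plus an equi-Lipschitz bound for $(\v_{t,T})$ obtained by a contradiction--Cauchy-estimate argument) and then uses $d_M(\v_{s,t}(z),z)\le\int_s^t c_{K,T}$ directly, whereas you obtain a uniform bound on $\|(df_u)_z\|$ in one stroke and integrate along the explicit curve $t\mapsto\v_{s,t}(z)\subset\hat K$. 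Your curve-length argument has the pleasant side effect of sidestepping the (minor) issue of passing from a pointwise differential bound on $\hat K$ to a genuine Lipschitz inequality $d_N(f_T(z),f_T(w))\le C\,d_M(z,w)$ for $z,w\in\hat K$, since your curve stays inside $\hat K$ by construction.
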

\begin{proof}
Let $K\subset M$ be a compact set. Let $T>0$ be fixed.
By Lemma \ref{jointly} the subset of $M$ defined as $$\hat{K}:=\bigcup_{0\leq s\leq t\leq T} \v_{s,t}(K)$$ is compact. Indeed
$\hat{K}=\Psi(K\times \Delta_T)$, where $\Delta_T=\{(s,t): 0\leq s\leq t\leq T\}$.

Since $f_T$  is locally Lipschitz there exists $C= C(\hat{K})
>0$ such that $$d_N(f_T(z),f_T(w))\leq Cd_M(z,w),\quad z,w\in \hat{K}.$$
The family $(\v_{t,T})_{0\leq t\leq T}$ is equi-Lipschitz on $\hat{K}$, that is there exists $L(\hat{K},T)>0$ such that
\begin{equation}\label{estimatelip}
d_M(\v_{t,T}(z),\v_{t,T}(w))\leq L(\hat{K},T)d_M(z,w),\quad  z,w\in\hat{K},\ t\in[0,T].
\end{equation}
Indeed assume by contradiction that there exist sequences $(z_n),(w_n)$ in $\hat{K}$, and $(t_n)$ in $[0,T]$ such that
\begin{equation}\label{incr2}
\frac{d_M(\v_{t_n,T}(z_{n}),\v_{t_n,T}(w_n))}{d_M(z_{n},w_{n})}\geq n.
\end{equation} By passing to subsequences we can assume $t_n\to t$, $z_n\to z$ and $w_n\to w$, and by (\ref{incr2}) it is easy to see that $z=w$.

Let $U,V$ be two open subsets of $M$, both biholomorphic to $\mathbb{B}^n$ such that $z\in V\subset\subset U\subset\subset M.$
Since  the sequence $(\v_{t_n,T})$ converges to $\v_{t,T}$ uniformly on $U$ we have that eventually $ \v_{t_n,T}(V)\subset \v_{t,T}(U)$. The sequence $(\v_{t_n,T}|_V)$ is thus equibounded and by Cauchy estimates it is equi-Lipschitz in a neighborhood of $z$, which contradicts (\ref{incr2}).

Hence, for all $z\in K$ and $0\leq s\leq t\leq T$ we have
\begin{equation*}
\begin{split}
d_N(f_s(z), f_t(z))&=d_N(f_T(\v_{s,T}(z)), f_T(\v_{t,T}(z)))\leq C d_M(\v_{s,T}(z),
\v_{t,T}(z))
\\ & = C d_M(\v_{t,T}(\v_{s,t}(z)), \v_{t,T}(z))\leq CL(\hat{K},T)d_M(\v_{s,t}(z), z)
\\& = CL(\hat{K},T) d_M(\v_{s,t}(z), \v_{s,s}(z))\leq CL(\hat{K},T) \int_s^t c_{K,T}(\xi) d\xi,
\end{split}
\end{equation*}
by (\ref{ck-evd}). This concludes the proof.
\end{proof}
\begin{corollary}\label{L^dL^d}
Assume that the algebraic evolution family $(\v_{s,t})$ on a complete hyperbolic complex manifold $M$ is associated with the algebraic Loewner chain $(f_t\colon M\to N)$. Then $(\v_{s,t})$ is an $L^d$-evolution family if and only if $(f_s)$ is an $L^d$-Loewner chain.
\end{corollary}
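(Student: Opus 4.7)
The plan is to observe that this corollary is essentially a formal combination of Theorems \ref{Low-to-ev} and \ref{ev-to-Low}, once one notices that every complete hyperbolic complex manifold is taut, so both theorems apply under the hypotheses given.

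For the direction ``$(\v_{s,t})$ is $L^d$ $\Rightarrow$ $(f_t)$ is $L^d$'', I would invoke Theorem \ref{ev-to-Low} directly: since $M$ is complete hyperbolic, it is in particular taut, and $(f_t)$ is by hypothesis an algebraic Loewner chain associated with the $L^d$-evolution family $(\v_{s,t})$. Theorem \ref{ev-to-Low} then yields that $(f_t)$ is an $L^d$-Loewner chain with respect to any Hermitian metric on $N$.

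For the converse, suppose $(f_t)$ is an $L^d$-Loewner chain. The key step is to identify the family produced by Theorem \ref{Low-to-ev} with the given $(\v_{s,t})$. More precisely, Theorem \ref{Low-to-ev} (applicable since $M$ is complete hyperbolic) asserts that $\tilde\v_{s,t}:=f_t^{-1}\circ f_s$ is an $L^d$-evolution family on $M$. Since by hypothesis $(\v_{s,t})$ is associated with $(f_t)$, the relation $f_s=f_t\circ\v_{s,t}$ together with the univalence of each $f_t$ (which is part of the Loewner chain definition) forces $\v_{s,t}=f_t^{-1}\circ f_s=\tilde\v_{s,t}$. Hence $(\v_{s,t})$ inherits the $L^d$-regularity from $\tilde\v_{s,t}$.

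I do not anticipate any real obstacle in the proof: the entire content is the assembly of the two preceding theorems plus the uniqueness-type identification $\v_{s,t}=f_t^{-1}\circ f_s$ in the converse. The only small bookkeeping point is to track which hypothesis on $M$ is needed in each direction (tautness for Theorem \ref{ev-to-Low}, complete hyperbolicity for Theorem \ref{Low-to-ev}), both of which are satisfied under the standing assumption of the corollary.
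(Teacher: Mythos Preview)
Your proposal is correct and follows exactly the paper's approach: the paper's proof is the single line ``It follows from Theorems \ref{ev-to-Low} and \ref{Low-to-ev},'' and you have simply unpacked this, including the identification $\v_{s,t}=f_t^{-1}\circ f_s$ via univalence and the observation that complete hyperbolicity guarantees tautness.
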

\begin{proof}
It follows from Theorems \ref{ev-to-Low} and \ref{Low-to-ev}.
\end{proof}

When dealing with evolution families defined on a domain $D$
of a complex manifold $N$, a natural question is whether there
exists an associated Loewner chain whose range is contained
in $N$, or, in other terms, whether the Loewner range  is biholomorphic to
a domain of  $N$. This question makes
particularly sense if $D=\B^n$ and $N=\C^n$. In other words:

\medskip

{\bf Open question:} Given an $L^d$-evolution family  on the unit ball $\B^n$ does there exist an
associated $L^d$-Loewner chain with range in $\C^n$?

\medskip

\begin{remark}\label{not_admit}
There exists an algebraic evolution family $(\v_{s,t})$ on $\B^3$ which does not admit any associated algebraic Loewner chain with range in $\C^3$. This follows from  \cite[Section 9.4]{A10}.
\end{remark}

There are several works in this direction, answering such a
question in some normalized class of evolution families
(see \cite{A10}, \cite{GHK01}, \cite{GHK09}, \cite{GHKK08}, \cite{GK03},
\cite{Por91}, \cite{Vo})
but in its generality the question is
still open. Here we give some answers based on the
asymptotic behavior of the Kobayashi pseudometric
under the corresponding evolution family.

\begin{definition}
Let $(\v_{s,t})$ be an algebraic evolution family on a complex manifold $M$. Let $\kappa_M: TM\to \R^+$ be the Kobayashi pseudometric of $M$. For $v\in T_zM$ and $s\geq 0$  we define
\begin{equation}\label{beta}
\beta^s_v(z):=\lim_{t\to \infty} \kappa_M(\v_{s,t}(z); (d\v_{s,t})_z(v)).
\end{equation}
\end{definition}

\begin{remark}
Let $0\leq s\leq u\leq t$. Since the Kobayashi pseudometric is
contracted by holomorphic mappings it follows
\begin{equation*}
\begin{split}
\kappa_M(\v_{s,t}(z); (d\v_{s,t})_z(v))&=\kappa_M(\v_{u,t}(\v_{s,u}(z));
(d\v_{u,t})_{\v_{s,u}(z)}(d\v_{s,u})_z(v))
\\&\leq \kappa_M(\v_{s,u}(z); (d\v_{s,u})_z(v)),
\end{split}
\end{equation*}
hence the limit in \eqref{beta} is well defined.
\end{remark}

\begin{proposition}\label{beta-ev}
Let $(\v_{s,t})$ be
an algebraic evolution family  on a complex manifold $M$. Let
$(f_t\colon M\to N)$ be an associated surjective algebraic Loewner chain.
Then for all $z\in M$ and $v\in T_zM$ it follows
\[
f_s^*\kappa_N(z;v)=\beta^s_v(z).
\]
\end{proposition}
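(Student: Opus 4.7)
The plan is to reduce $\beta^s_v(z)$ to a limit of Kobayashi pseudometrics of the subdomains $f_t(M) \subset N$, and then invoke the standard monotonicity/continuity of the Kobayashi pseudometric under increasing unions. The whole proof rests on the fact that $f_s = f_t \circ \v_{s,t}$ together with each $f_t$ being a biholomorphism onto its image.

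The first step is to rewrite the quantity inside the limit defining $\beta^s_v(z)$. Differentiating the relation $f_s = f_t \circ \v_{s,t}$ at $z$ gives $(df_s)_z(v) = (df_t)_{\v_{s,t}(z)} (d\v_{s,t})_z(v)$. Setting $w := f_s(z) \in N$ and $u := (df_s)_z(v) \in T_w N$, one notices that $w = f_t(\v_{s,t}(z))$ for every $t \geq s$, so $w$ lies in $f_t(M)$ and $\v_{s,t}(z) = f_t^{-1}(w)$, $(d\v_{s,t})_z(v) = (df_t^{-1})_w(u)$. Since $f_t \colon M \to f_t(M)$ is a biholomorphism, the biholomorphic invariance of the Kobayashi pseudometric yields
\[
\kappa_M\bigl(\v_{s,t}(z);\, (d\v_{s,t})_z(v)\bigr) = \kappa_{f_t(M)}(w;u).
\]
Hence $\beta^s_v(z) = \lim_{t\to\infty} \kappa_{f_t(M)}(w;u)$.

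The second step is to identify this limit with $\kappa_N(w;u) = f_s^*\kappa_N(z;v)$. Since $(f_t)$ is a Loewner chain, $f_t(M) \subset f_{t'}(M) \subset N$ whenever $t \leq t'$, and by assumption $N = \bigcup_{t\geq 0} f_t(M)$ is surjective. The Kobayashi pseudometric is contracted by inclusions, so $t \mapsto \kappa_{f_t(M)}(w;u)$ is monotone non-increasing for $t \geq s$ and bounded below by $\kappa_N(w;u)$. The standard monotone convergence property for the Kobayashi pseudometric on increasing unions of open sets (see, e.g., Kobayashi's book \cite{Kob}) gives $\kappa_{f_t(M)}(w;u) \downarrow \kappa_N(w;u)$ as $t\to\infty$, and combining with the previous display yields
\[
\beta^s_v(z) = \kappa_N(w;u) = \kappa_N\bigl(f_s(z);(df_s)_z(v)\bigr) = f_s^*\kappa_N(z;v),
\]
which is the desired identity.

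No step is particularly hard: the only point to check carefully is that the increasing-union continuity of $\kappa_{(\cdot)}$ applies in the present generality (an $\R^+$-indexed nested family of open subsets of a complex manifold $N$). This reduces to a countable exhaustion $t_n \to \infty$ by monotonicity, so the classical theorem on nested increasing sequences of open sets suffices. Surjectivity of $(f_t)$ is essential here — without it, the limit would be the Kobayashi pseudometric of $\mathrm{rg}(f_t) = \bigcup_{t\geq 0} f_t(M)$ rather than that of $N$, which is exactly the issue that motivates the definition of the Loewner range.
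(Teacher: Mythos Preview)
Your proof is correct and follows essentially the same route as the paper: both arguments use the biholomorphic invariance of $\kappa$ through $f_t\colon M\to f_t(M)$ to identify $\kappa_M(\v_{s,t}(z);(d\v_{s,t})_z(v))$ with $\kappa_{f_t(M)}(f_s(z);(df_s)_z(v))$, and then invoke the increasing-union continuity of the Kobayashi pseudometric on $N=\bigcup_t f_t(M)$. The paper presents these two steps in the reverse order and passes immediately to the integer-indexed exhaustion $(f_j(M))_{j\in\N}$, which is exactly the reduction you mention in your final paragraph.
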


\begin{proof}
Since the chain $(f_t\colon M\to N)$ is surjective, the range $N$ is the union of the growing sequence of complex manifolds $(f_j(M))_{j\in\mathbb{N}}$, thus
$$\kappa_N(f_s(z);(df_s)_z)(v))=\lim_{j\to\infty}\kappa_{f_j(M)}(f_s(z);(df_s)_z(v)).$$
The result follows from
\begin{align}
\lim_{j\to \infty}\kappa_{f_j(M)}(f_s(z);(df_s)_z)(v))&=\lim_{j\to \infty}\kappa_{M}(f_j^{-1}(f_s(z));(df_j^{-1})_{f_s(z)}\circ (df_s)_z(v))\\\nonumber
&=\lim_{j\to\infty}\kappa_M(\v_{s,j}(z);(d\v_{s,j})_z(v))\nonumber.
\end{align}
\end{proof}

As corollaries we find (cf. \cite[Theorem 1.6]{CDG09})
\begin{corollary}\label{Lrdisc}
Let $(\v_{s,t})$ be an algebraic evolution family on the unit disc $\mathbb{D}.$
If there exist  $z\in \D$,\ $v\in T_z\mathbb{D}$,\ $s\geq 0$, such that $\beta_v^s(z)=0$ then
\begin{itemize}
\item[i)] $\Lr(\v_{s,t})$ is biholomorphic to $\mathbb{C}$,
\item[ii)] $\beta_v^s(z)=0$ for all  $z\in \D$,\ $v\in T_z\mathbb{D}$,\ $s\geq 0$.
\end{itemize}
If there exist  $z\in \D$,\ $v\in T_z\mathbb{D}$,\ $s\geq 0$, such that $\beta_v^s(z)\neq 0$ then
\begin{itemize}
\item[i)] $\Lr(\v_{s,t})$ is biholomorphic to $\mathbb{D}$,
\item[ii)] $\beta_v^s(z)\neq0$ for all  $z\in \D$,\ $v\in T_z\mathbb{D}$,\ $s\geq 0$.
\end{itemize}
\end{corollary}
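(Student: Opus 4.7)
The plan is to combine Proposition \ref{beta-ev} with the uniformization theorem. By Theorem \ref{algebraic_theorem} there exists an associated algebraic Loewner chain $(f_t\colon \D\to N)$, and replacing $N$ by $\rg(f_t)=\bigcup_{t\geq 0}f_t(\D)$ I may assume $(f_t)$ is surjective, so that Proposition \ref{beta-ev} applies and gives
$$\beta^s_v(z) \;=\; f_s^*\kappa_N(z;v) \;=\; \kappa_N\bigl(f_s(z);(df_s)_z(v)\bigr), \qquad z\in\D,\ v\in T_z\D,\ s\geq 0.$$

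First I would check that $N$ is a simply connected, non-compact Riemann surface. Non-compactness is immediate from $N\supset f_0(\D)$. For simple connectedness, any loop $\gamma\subset N$ is compact, hence contained in some $f_t(\D)$ by the open cover $\{f_t(\D)\}_{t\geq 0}$; since $f_t(\D)$ is a univalent image of $\D$ it is simply connected, so $\gamma$ is null-homotopic in $N$. By the uniformization theorem, $N$ is biholomorphic to either $\C$ or $\D$. The two alternatives are distinguished by their Kobayashi pseudometric: on $\C$ one has $\kappa_N\equiv 0$, while on $\D$ the pseudometric $\kappa_N$ coincides with the Poincar\'e metric and hence satisfies $\kappa_N(w;u)>0$ for every $w\in N$ and every $u\in T_wN\setminus\{0\}$.

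The rest is routine. Univalence of $f_s$ gives $(df_s)_z(v)\neq 0$ whenever $v\neq 0$. Hence the existence of one triple $(z_0,v_0,s_0)$ with $v_0\neq 0$ and $\beta_{v_0}^{s_0}(z_0)=0$ forces $\kappa_N$ to vanish on a nonzero tangent vector of $N$, which excludes $N\simeq\D$ and thus yields $\Lr(\v_{s,t})\simeq \C$ together with $\beta_v^s(z)\equiv 0$ everywhere. Symmetrically, the existence of one triple with $\beta_{v_0}^{s_0}(z_0)\neq 0$ excludes $N\simeq\C$ and yields $\Lr(\v_{s,t})\simeq \D$ together with $\beta_v^s(z)\neq 0$ for every $z\in\D$, $s\geq 0$ and every nonzero $v\in T_z\D$. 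The only subtlety I foresee is making sure that Proposition \ref{beta-ev} is legitimately applicable, which requires the associated chain to be surjective; this is handled once and for all by the reduction to $N=\rg(f_t)$, so no genuine obstacle remains.
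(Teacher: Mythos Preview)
Your approach is exactly the paper's: uniformization plus Proposition \ref{beta-ev}. The only slip is the justification of non-compactness: the containment $N\supset f_0(\D)$ alone does not force $N$ to be non-compact (e.g.\ $\mathbb{P}^1$ contains plenty of copies of $\D$). The correct argument is the same open-cover trick you already use for simple connectedness: if $N$ were compact, the increasing open cover $\{f_t(\D)\}_{t\geq 0}$ would give $N=f_T(\D)$ for some $T$, contradicting $f_T(\D)\cong\D$. With that one-line fix your proof is complete and coincides with the paper's, which simply asserts non-compactness and simple connectedness without further comment.
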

\begin{proof}
Since the Loewner range $\Lr(\v_{s,t})$ is non-compact and simply connected, by the uniformization theorem it has to be biholomorphic to $\mathbb{D}$ or $\mathbb{C}$. Since
$$\kappa_\mathbb{C}(z;v)=0,\quad z\in \mathbb{C},v\in T_z\mathbb{C},$$
$$\kappa_\mathbb{D}(z;v)\neq 0,\quad  z\in \mathbb{D},v\in T_z\mathbb{D},$$
the result follows from Proposition \ref{beta-ev}.
\end{proof}
\begin{corollary}\label{evdisc}
Let $d\in [1,+\infty]$.
Let $(\v_{s,t})$ be an $L^d$-evolution family  on the unit disc $\D$. Then there exists an  $L^d$-Loewner
chain $(f_t)$  associated with $(\v_{s,t})$ such that $\rg(f_t)$ is either the unit disc $\D$ or the complex plane
$\C$.
\end{corollary}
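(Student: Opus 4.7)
The plan is to combine the existence of an associated $L^d$-Loewner chain with the uniformization-type dichotomy for the Loewner range provided by Corollary \ref{Lrdisc}, and then transport the chain via a biholomorphism so that its range lands in $\D$ or $\C$.

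First, since $\D$ is a complete hyperbolic (hence taut) manifold, Theorem \ref{algebraic_theorem} gives an associated algebraic Loewner chain $(f_t\colon\D\to N)$ with $N=\rg(f_t)$, and Theorem \ref{ev-to-Low} (equivalently, Corollary \ref{L^dL^d}) upgrades it to an $L^d$-Loewner chain for any Hermitian metric on $N$. By definition, $N$ represents the biholomorphism class $\Lr(\v_{s,t})$. Corollary \ref{Lrdisc} then yields a biholomorphism $\Lambda\colon N\to P$ where $P\in\{\D,\C\}$ (the choice being dictated by whether $\beta^s_v(z)$ vanishes or not).

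Next, set $g_t:=\Lambda\circ f_t\colon \D\to P$. Each $g_t$ is univalent, and
\[
g_t^{-1}\circ g_s=f_t^{-1}\circ\Lambda^{-1}\circ\Lambda\circ f_s=f_t^{-1}\circ f_s=\v_{s,t},
\]
so $(g_t)$ is associated with $(\v_{s,t})$, and $\rg(g_t)=\Lambda(N)=P$ is either $\D$ or $\C$.

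It remains to check the $L^d$-estimate for $(g_t)$. Fix a compact $K\subset\subset\D$ and $T>0$. By Remark \ref{conti}, the map $\Psi(z,t)=f_t(z)$ is jointly continuous, so $K_T:=\Psi(K\times[0,T])$ is a compact subset of $N$. Choose a relatively compact open neighborhood $W\Subset N$ of $K_T$; since $\Lambda$ is holomorphic on $W$, it is Lipschitz there with some constant $L=L(K,T)>0$ with respect to the Hermitian distance $d_N$ on $N$ and the Euclidean distance on $P\subset\C$. Using the $L^d$-estimate for $(f_t)$, for all $z\in K$ and $0\leq s\leq t\leq T$ we get
\[
d_P(g_s(z),g_t(z))=d_P\bigl(\Lambda(f_s(z)),\Lambda(f_t(z))\bigr)\leq L\, d_N(f_s(z),f_t(z))\leq L\int_s^t k_{K,T}(\xi)\,d\xi,
\]
so $L\cdot k_{K,T}\in L^d([0,T],\R^+)$ serves as the required majorant for $(g_t)$. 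This shows $(g_t)$ is the desired $L^d$-Loewner chain, completing the proof. The only real content is the dichotomy furnished by Corollary \ref{Lrdisc}; everything else is a routine transport of structure along the biholomorphism $\Lambda$.
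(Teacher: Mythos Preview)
Your proof is correct and follows essentially the same route as the paper, which simply cites Corollary~\ref{Lrdisc} and Theorem~\ref{ev-to-Low}. The only minor difference is that you verify the $L^d$-estimate for $(g_t)$ by hand via a Lipschitz bound on $\Lambda$, whereas a second application of Theorem~\ref{ev-to-Low} to the algebraic Loewner chain $(g_t=\Lambda\circ f_t)$ gives the $L^d$-property directly for any Hermitian metric on $P$, making the explicit estimate unnecessary.
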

\begin{proof}
It follows from Corollary \ref{Lrdisc} and Theorem \ref{ev-to-Low}.
\end{proof}

Such a result can be generalized in higher
dimension as follows.
 As customary, let us denote by ${\sf aut}(M)$ the group of
holomorphic automorphisms of a complex manifold $M$. Notice that
condition $M$   hyperbolic  and  $M/\sf{aut}(M)$ compact
implies that $M$ is complete hyperbolic (see \cite{F-S}).

\begin{theorem}\label{forsi}
Let $M$ be a hyperbolic complex manifold and assume that
$M/\sf{aut}(M)$ is compact. Let $(\v_{s,t})$ be an algebraic evolution
family on $M$.  Then
\begin{enumerate}
\item If there exists $z\in M$, $s\geq0$ such that $\beta^s_v(z)\neq 0$ for
all $v\in T_zM$ with $v\neq 0$ then  $\Lr(\v_{s,t})$ is biholomorphic to $M$.
\item If there exists $z\in M$, $s\geq0$ such that $\dim_\C\{v\in T_zM: \beta_v^s(z)= 0\}=1$  then  $\Lr(\v_{s,t})$ is a fiber bundle with fiber $\mathbb{C}$
over a closed complex submanifold of $M$.
\end{enumerate}
\end{theorem}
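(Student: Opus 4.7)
The plan is to identify the corank of the Kobayashi pseudometric on the Loewner range via Proposition~\ref{beta-ev}, and then appeal to the structure theorems of Fornaess--Sibony \cite{F-S} for increasing unions of biholomorphic copies of a hyperbolic model manifold whose quotient by its automorphism group is compact.

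First I would invoke Theorem~\ref{algebraic_theorem} to produce an associated algebraic Loewner chain $(f_t\colon M\to N)$ with $N=\bigcup_{t\geq 0}f_t(M)$ representing the biholomorphism class $\Lr(\v_{s,t})$. By construction this chain is surjective, and $N$ is an increasing union of the open subsets $f_t(M)$, each biholomorphic to $M$ via $f_t$. Since $M$ is hyperbolic and $M/\aut(M)$ is compact, $M$ is complete hyperbolic (as recalled just before the statement), so Proposition~\ref{beta-ev} applies and gives
$$f_s^*\kappa_N(z;v)=\beta_v^s(z),\quad z\in M,\ v\in T_zM,\ s\geq 0.$$
In particular, the corank of the pseudo-norm $v\mapsto \beta_v^s(z)$ on $T_zM$ coincides with the corank of $\kappa_N$ at $f_s(z)\in N$. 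Under hypothesis (1), $\kappa_N$ is therefore non-degenerate at $f_s(z)$, while under hypothesis (2) it has corank exactly one at $f_s(z)$.

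The conclusion is then obtained by invoking the Fornaess--Sibony classification from \cite{F-S}: for a complex manifold $N$ realized as an increasing union of biholomorphic copies of a hyperbolic $M$ with $M/\aut(M)$ compact, non-degeneracy of $\kappa_N$ at some point forces $N\cong M$, whereas corank exactly one at some point forces $N$ to be a holomorphic fiber bundle with fiber $\mathbb{C}$ over a closed complex submanifold of $M$. Applied at the point $f_s(z)$ these give conclusions (1) and (2) respectively. The only genuinely non-trivial step is the translation of the pointwise corank hypothesis on $\beta_v^s(z)$ into the corresponding corank hypothesis on $\kappa_N$ required by Fornaess--Sibony; once Proposition~\ref{beta-ev} is available this translation is immediate, so the substantive content lies in checking that the hypotheses of the cited Fornaess--Sibony results are met by the increasing union $N=\bigcup_t f_t(M)$ rather than in any novel argument on our side. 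This matching of hypotheses is the main point to verify carefully.
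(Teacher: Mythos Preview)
Your proposal is correct and follows essentially the same route as the paper: the paper's proof is a one-line appeal to Proposition~\ref{beta-ev} together with \cite[Theorem 3.2, Main Theorem]{F-S}, and you have simply spelled out the intermediate steps (constructing the surjective chain via Theorem~\ref{algebraic_theorem}, translating the corank of $\beta_v^s$ into the corank of $\kappa_N$, and matching the Fornaess--Sibony hypotheses). One minor remark: Proposition~\ref{beta-ev} is stated for arbitrary complex manifolds and does not itself require complete hyperbolicity of $M$, so your aside about complete hyperbolicity is unnecessary there, though harmless.
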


\begin{proof}
It follows at once from Proposition \ref{beta-ev} and
\cite[Theorem 3.2, Main Theorem]{F-S}.
\end{proof}

In particular the previous result applies to $M=\B^n$ (or even to
the polydiscs in $\C^n$) and we obtain

\begin{corollary} \label{image-Cn}
Let $(\v_{s,t})$ be an algebraic evolution family  on the unit ball $\B^n$.
If for some $z\in \B^n$, $s\geq 0$
it follows that $\dim_\C\{v\in \C^n : \beta^s_v(z)=0\}\leq 1$,
then there exists an algebraic Loewner chain $(f_t\colon M\to\mathbb{C}^n)$ associated with $(\v_{s,t})$.
\end{corollary}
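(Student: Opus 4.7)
The plan is to invoke Theorem \ref{forsi} with $M=\B^n$ and then check, case by case, that the Loewner range is biholomorphic to a domain in $\C^n$; the conclusion will then follow from Theorem \ref{algebraic_theorem}. First, I would verify the hypotheses of Theorem \ref{forsi}: the unit ball $\B^n$ is (complete) hyperbolic, and $\aut(\B^n)$ acts transitively on $\B^n$, so the quotient $\B^n/\aut(\B^n)$ is a single point, which is trivially compact. The standing assumption $\dim_\C\{v\in\C^n:\beta^s_v(z)=0\}\le 1$ splits into exactly two sub-cases: either the dimension equals $0$, in which case $\beta^s_v(z)\neq 0$ for every $v\neq 0$ and clause $(1)$ of Theorem \ref{forsi} applies; or the dimension equals $1$, in which case clause $(2)$ applies.

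In the first sub-case, Theorem \ref{forsi}$(1)$ identifies $\Lr(\v_{s,t})$ with $\B^n$ up to biholomorphism, and we are done because $\B^n$ is already a domain in $\C^n$. In the second sub-case, Theorem \ref{forsi}$(2)$ produces a closed complex submanifold $S\subset \B^n$ (necessarily of dimension $n-1$, for dimension reasons) together with a holomorphic fiber bundle structure $\Lr(\v_{s,t})\to S$ with fiber $\C$. I would then argue that such a total space is biholomorphic to a domain in $\C^n$ via Stein/Oka techniques: since $S$ is a closed complex submanifold of the Stein manifold $\B^n$, it is itself Stein of dimension $n-1$, so the Oka--Grauert principle reduces the holomorphic classification of fiber bundles over $S$ with fiber $\C$ to the topological one, and one can realize the bundle as an open subset of the total space of the normal bundle of $S$ in $\C^n$, which by a tubular-neighborhood type argument sits holomorphically inside a neighborhood of $S$ in $\C^n$.

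Once we know $\Lr(\v_{s,t})$ is biholomorphic to some open set $\Omega\subset\C^n$, we finish with Theorem \ref{algebraic_theorem}: any algebraic Loewner chain $(g_t\colon\B^n\to N)$ associated with $(\v_{s,t})$ has range $N$ biholomorphic to $\Lr(\v_{s,t})\cong\Omega$ via some map $\Lambda$ (by Corollary \ref{rangebihol} applied to an auxiliary chain landing in $\Omega$), so that $f_t:=\Lambda\circ g_t\colon\B^n\to\C^n$ is the required algebraic Loewner chain associated with $(\v_{s,t})$.

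The step I expect to be the main obstacle is the second sub-case: passing from the \emph{abstract} fiber bundle supplied by Theorem \ref{forsi}$(2)$ to a \emph{concrete} realization as a domain in $\C^n$. The cleanest route seems to be not to embed an abstractly given bundle after the fact, but rather to unfold the construction behind Theorem \ref{forsi}$(2)$ (essentially the Fornaess--Sibony description of the increasing union $\bigcup_t f_t(\B^n)$), and to check directly that $\Lr(\v_{s,t})$ is produced as an open subset of $\C^n$ built from $S$ together with transverse $\C$-directions, bypassing any general classification argument for $\C$-bundles over Stein bases.
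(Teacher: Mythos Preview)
Your treatment of the dimension-zero sub-case is exactly the paper's, and your final paragraph (concluding via Theorem \ref{algebraic_theorem} and Corollary \ref{rangebihol}) is also fine.

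The gap is in the dimension-one sub-case. Your primary Stein/Oka argument does not go through as written, for three reasons. First, holomorphic tubular neighborhoods of complex submanifolds do not exist in general, so you cannot pass from the normal bundle of $S$ in $\C^n$ to an open neighborhood of $S$ in $\C^n$ by a ``tubular-neighborhood type argument''. Second, there is no reason the abstract $\C$-fiber bundle produced by Theorem \ref{forsi}(2) should be (holomorphically or even topologically) the normal bundle of $S$ in $\C^n$; Theorem \ref{forsi}(2) gives you a bundle structure on $\Lr(\v_{s,t})$, not an identification with any specific bundle built from the embedding $S\hookrightarrow\B^n$. Third, even if Oka--Grauert forces the bundle to be trivial, you would need to embed $S\times\C$ into $\C^n$, hence $S$ into $\C^{n-1}$; but $S$ is only given as a closed $(n-1)$-dimensional submanifold of $\B^n\subset\C^n$, and an arbitrary $(n-1)$-dimensional Stein manifold need not embed in $\C^{n-1}$.

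Your fallback---going back inside the Fornaess--Sibony machinery rather than treating the bundle abstractly---is the right instinct, and it is precisely what the paper does: it invokes \cite[Corollary 4.8]{F-S}, which sharpens the conclusion of Theorem \ref{forsi}(2) in the specific case $M=\B^n$ to the explicit biholomorphism $\Lr(\v_{s,t})\cong\B^{n-1}\times\C$. That single citation replaces your entire second-sub-case discussion and closes the argument immediately, since $\B^{n-1}\times\C$ is visibly a domain in $\C^n$.
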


\begin{proof}
If the dimension is zero, then by Theorem
\ref{forsi} the Loewner range is biholomorphic to $\B^n\subset \C^n$. If the dimension is one, then by Theorem
\ref{forsi} the Loewner range  is  a fiber bundle with fiber $\mathbb{C}$ over a closed complex submanifold of $\B^n$ and
by \cite[Corollary 4.8]{F-S} it is actually biholomorphic to
$\B^{n-1}\times \C\subset \C^n$.
\end{proof}

If $\dim_\C\{v\in \C^n : \beta^s_v(z)=0\}\geq 2$  the complex structure of the Loewner range can be more complicated: the Loewner range of the algebraic evolution family recalled in Remark \ref{not_admit} has  $\dim_\C\{v\in \C^n : \beta^s_v(z)=0\}= 2$  and is not biholomorphic to a domain of $\C^3$.

\begin{example}
Let $(\v_{s,t})$ be an algebraic evolution family of $\B^2$ such that
$\v_{s,t}(0)=0$ for all $0\leq s\leq t$ and $(d\v_{s,t})_0=
e^{A(t-s)}$ where $A$ is a diagonal matrix with eigenvalues
$i\theta$, $\theta\in \R$ and $\lambda\in \C$ for some $\Re
\lambda\leq 0$. Then $\dim_\C\ker \beta_v^s(0)\leq 1$ (it is
either $1$ if $\Re \lambda<0$ or $0$ if $\Re \lambda=0$ which
is the case if and only if $\v_{s,t}$ are automorphisms).
Therefore in such a case there exists an algebraic Loewner chain with
range in $\C^2$.
\end{example}

The previous example can be generalized as follows:

\begin{example}
Let $G(z,t)$ be an $L^\infty$-Herglotz vector field in $\B^n$
such that $G(0,t)\equiv 0$ and $(d_zG)_{z=0}(\cdot,t)=A(t)$
where $A(t)$ is a diagonal $n\times n$ matrix with eigenvalues
$\lambda_1(t), \ldots, \lambda_n(t)$ where $\lambda_j: \R^+\to
\C$ are functions of class $L^\infty$ such that $\Re
\lambda_j(t)\leq 0$ for almost every $t\geq 0$ and $j=1,\ldots,
n$. Assume that there exists $C>0$ such that
\[
\int_0^t \Re \lambda_j(\tau) d\tau \geq -C, \quad t\geq0,\ j=1,\ldots, n-1.
\]
Let $(\v_{s,t})$ be the associated $L^\infty$-evolution family
in $\B^n$. Then $\v_{s,t}(0)=0$ and $(d\v_{s,t})_0$ is the
diagonal matrix with eigenvalues $\exp \left(\int_s^t
\lambda_j(\tau)d\tau \right)$ for $j=1,\ldots, n$. Hence
$\dim_\C\ker \beta_v^s(0)\leq 1$ and there exists an associated
$L^\infty$-Loewner chain with range in $\C^n$.
\end{example}

\section{Loewner-Kufarev  PDE}\label{PDE}
In this section we prove that $L^d$-Loewner chains  on complete hyperbolic complex manifolds are  the univalent solutions of the Loewner-Kufarev partial differential equation, as in the classical theory of Loewner chains on the unit
ball $\B^n$ in $\mathbb{C}^n$ (see \cite{GHK01}, \cite{GK03}). Other results related to the solutions of the Loewner-Kufarev PDE on $\B^n$ may be
found in \cite{DGHK}.

\begin{proposition}
\label{vitali}
Let $M$ be a complete hyperbolic complex manifold, and let $(f_t\colon M\to N)$ be a Loewner chain of order $d\in[1,+\infty]$ on $M$. Then
there exists a set $E\subset\R^+$ $($independent of $z)$ of
zero measure such that for every  $s\in (0,+\infty)\setminus E$, the
mapping $$M\ni z \mapsto \frac{\partial f_s}{\partial s}(z)\in T_{f_s(z)}N$$
is  well-defined and holomorphic  on $M$.
\end{proposition}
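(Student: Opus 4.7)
The plan is to reduce to a local analysis in coordinate charts and then extract a $z$-independent null set via a normal family (Vitali--Porter style) argument, as the name of the proposition suggests. The main obstacle is that pointwise differentiability of $s\mapsto f_s(z)$ is only an a.e. statement \emph{in $s$, with a null set depending on $z$}; to pass to a single null set that works for all $z$ one has to promote this pointwise information to convergence in $\hol(M,N)$.

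First, I would cover $M\times\mathbb{R}^+$ by countably many sets of the form $B\times I$, where $B\subset M$ is a relatively compact open polydisc chart and $I\subset\mathbb{R}^+$ is an open interval such that there is a coordinate chart $V$ of $N$ with $f_t(\overline B)\subset V$ for all $t\in\overline I$; such a cover exists by joint continuity of $(z,t)\mapsto f_t(z)$ (Remark \ref{conti}) and second countability. On each such $B\times I$, the distance $d_N$ restricted to the compact set $\bigcup_{t\in\overline I} f_t(\overline B)$ is bi-Lipschitz equivalent to the Euclidean distance read in $V$, so the $L^d$-estimate (LC3) translates to
\[
\|f_s(z)-f_t(z)\|\le C\int_s^t k(\xi)\,d\xi,\qquad z\in\overline B,\ s,t\in I,
\]
for some $k\in L^d_{\mathrm{loc}}$ depending on $B$ and $I$.

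Next, fix such a box $B\times I$, pick a countable dense subset $\{z_n\}\subset B$, and let $E_{B,I}$ be the union of the null set of points of $I$ that are not Lebesgue points of $k$ and the null sets on which the absolutely continuous scalar functions $s\mapsto f_s(z_n)$ fail to be differentiable; then $|E_{B,I}|=0$. For $s\in I\setminus E_{B,I}$, the difference quotients
\[
D_h(z):=\frac{f_{s+h}(z)-f_s(z)}{h}
\]
are holomorphic on $B$ and satisfy $\|D_h(z)\|\le\frac{C}{h}\int_s^{s+h}k(\xi)\,d\xi$, which is bounded as $h\to 0$ because $s$ is a Lebesgue point of $k$; hence $\{D_h\}$ is normal by Montel's theorem. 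Any subsequential limit in the compact-open topology is holomorphic on $B$ and, by construction, equals $\lim_{h\to 0}D_h(z_n)$ at each $z_n$; by the identity principle all subsequential limits coincide, so $\lim_{h\to 0}D_h$ exists in $\hol(B,\mathbb{C}^n)$. Thus for every $z\in B$ the derivative $\partial_s f_s(z)$ exists and depends holomorphically on $z$, and it lies in $T_{f_s(z)}N$ via the chart.

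Finally, set $E:=\bigcup_{B,I} E_{B,I}$, a countable union of null sets, hence null and independent of $z$. For $s\notin E$, the local argument applies on every box $B\times I$ whose time-interval contains $s$, and the holomorphic derivatives on different boxes automatically agree on overlaps because they coincide with the common pointwise limit; they therefore glue to a well-defined holomorphic section of the pullback bundle $f_s^*TN\to M$, which is exactly the conclusion of the proposition. The only non-routine step is the combination of countable density with the Montel/identity-principle argument that upgrades pointwise a.e. differentiability into uniform-on-compacta differentiability with a $z$-independent exceptional set.
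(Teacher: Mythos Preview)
Your proof is correct and follows essentially the same approach as the paper: both localize via a countable cover by coordinate boxes $B\times I$, use Lebesgue points of the $L^d$ majorant to get local boundedness (hence normality) of the difference quotients, and invoke the identity principle on a countable set of uniqueness (you take a dense set, the paper a generic ``countable set of uniqueness'') to pin down a unique limit, yielding a $z$-independent null set. The only cosmetic difference is that the paper packages the Montel-plus-identity-principle step as an appeal to Vitali's theorem, whereas you spell it out.
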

\begin{proof}
The manifold  $M\times (0,+\infty)$ has a countable basis $\mathfrak{B}$ given by products of the type $B\times I$, where $B\subset M$ is an open subset biholomorphic to a ball, and $I\subset (0,+\infty)$ is a bounded open interval.
Let $\mathcal{V}$ be a countable covering of $N$ by charts. By Remark \ref{conti} the mapping $\Psi\colon M\times (0,+\infty)\to N$ is continuous, hence there exists a covering $\mathcal{U}\subset \mathfrak{B}$ of $M\times (0,+\infty)$   such that for any $U\in\mathcal{U}$ there exists $V\in\mathcal{V}$ such that $U\subset \Psi^{-1}(V)$.

We will prove that for each $U=B\times I\in \mathcal{U}$ there exists a set of full measure
$I'\subseteq I$
such that $B\ni z \mapsto \frac{\partial f_s}{\partial s}(z)$
is well defined and holomorphic for all $s\in I'$.
Hence $M \ni z \mapsto \frac{\partial f_s}{\partial s}(z)$ will be well defined
and holomorphic for $s\in \mathbb{R}^{+}\setminus \bigcup_{\mathcal{U}}({I\setminus I'})$ which is a set of full measure in $\mathbb{R}^+$.

We can assume that $M=\B^n$, $N=\C^n$, and that the distance $d_N$ is the Euclidean distance. Since $t\mapsto f_t(z)$ is locally absolutely continuous on $\mathbb{R}^+$ locally
uniformly with respect to $z\in \B^n$, we deduce that for each $z\in \B^n$, there is a null
set $E_1(z)\subset I$ such that for each $t\in I\setminus E_1(z)$, there
exists the limit
\begin{equation*}
\frac{\partial f_t}{\partial t}(z)=\lim_{h\to 0}\frac{f_{t+h}(z)-f_t(z)}{h}.
\end{equation*}

By definition there exists a function $p_k\in L^d(I,
\mathbb{R}^+)$ such that
\begin{equation}\label{pbound}
\|f_s(z)-f_t(z)\|\leq \int_s^tp_k(\xi)d\xi,\quad z\in \B^n_{1-1/k},\quad
 s\leq t\in I.
\end{equation}
Also, since $p_k\in L^d(I, \mathbb{R}^+)$,
we may find a null set $E_2(k)\subset I$ such that
for each $t\in I\setminus E_2(k)$, there exists the limit
\begin{equation}\label{pbound2}
p_k(t)=\lim_{h\to 0}\frac{1}{h}\int_t^{t+h}p_k(\xi)d\xi,\quad k\in\mathbb{N}.
\end{equation}
Next, let $Q$ be a countable set of uniqueness for the holomorphic functions
on $\B^n$ and let
\[E=\bigg(\bigcup_{q\in Q}E_1\left(q\right)\bigg)\bigcup
\bigg(\bigcup_{k=1}^{\infty}E_2(k)\bigg).\]
Then $E$ is a null subset of $\mathbb{R}^+$,
which does not depend on $z\in \B^n$.
Arguing as in the proof of \cite[Theorem 4.1(1)(a)]{CDG09},
it is not difficult to see that \eqref{pbound} and \eqref{pbound2}
imply that for each $s\in I\setminus E$,
the family $$\{(f_{s+h}(z)-f_s(z))/h,\quad 0<|h|<\textsf{dist}(s, \partial I)\}$$ is relatively
compact and has a unique accumulation point for $|h|\to 0$
by Vitali Theorem in several complex variables, proving the result.
\end{proof}

\begin{theorem}\label{LK-PDE}
Let $M$ be a complete hyperbolic complex manifold such that  the Kobayashi
distance $k_M\in C^1(M\times M\setminus \hbox{Diag})$, and let $N$ be a complex manifold of the same dimension.
Let $G:M\times \R^+\to TM$ be a Herglotz vector
field of order $d\in[1,+\infty]$
associated with the $L^d$-evolution family $(\varphi_{s,t})$. Then a family of univalent mappings $(f_t\colon M\to N)$ is an $L^d$-Loewner chain associated with $(\v_{s,t})$ if and only if   it is locally absolutely continuous
on $\R^+$ locally uniformly with respect to $z\in M$ and solves the  Loewner-Kufarev PDE
\begin{equation}\label{equationLK-PDE}
\frac{\partial f_s}{\partial s}(z)=-(df_s)_zG(z,s),\quad\mbox{a.e. }s\geq 0,z\in M.
\end{equation}
\end{theorem}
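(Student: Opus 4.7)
The plan is to handle the two implications separately, with the bulk of the work concentrated in the converse direction.

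For the direct implication, suppose $(f_t)$ is an $L^d$-Loewner chain associated with $(\v_{s,t})$. Local absolute continuity of $t\mapsto f_t(z)$, locally uniformly in $z$, is immediate from condition LC3. By Proposition \ref{vitali}, there is a null set $E\subset \R^+$ such that for every $s\notin E$ the derivative $\de f_s/\de s(z)$ exists and depends holomorphically on $z$. Fixing such an $s$, the association $f_s=f_{s+h}\circ \v_{s,s+h}$ yields
\[
\frac{f_{s+h}(z) - f_s(z)}{h} \;=\; -\,\frac{f_{s+h}(\v_{s,s+h}(z)) - f_{s+h}(z)}{h},
\]
and one then uses $\v_{s,s+h}(z) - z = \int_s^{s+h} G(\v_{s,\tau}(z),\tau)\,d\tau$ together with a first-order expansion of the holomorphic map $f_{s+h}$ around $z$. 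The Taylor remainder is controlled uniformly in $h$ by Cauchy estimates applied to the locally equibounded family $\{f_{s+h}\}_{h>0}$ on a fixed compact neighborhood of $z$. Passing to the limit at Lebesgue points of $\tau\mapsto G(\v_{s,\tau}(z),\tau)$ produces the PDE $\de f_s/\de s(z)=-(df_s)_z G(z,s)$.

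For the converse, suppose $(f_t)$ is a family of univalent mappings that is locally absolutely continuous on $\R^+$ locally uniformly in $z$ and satisfies the Loewner-Kufarev PDE. The heart of the argument is to prove the algebraic relation $f_s=f_t\circ\v_{s,t}$ for all $0\leq s\leq t$. Once this is established, univalence of each $f_t$ gives $f_s(M)\subset f_t(M)$, so $(f_t)$ is an algebraic Loewner chain associated with the $L^d$-evolution family $(\v_{s,t})$, and Theorem \ref{ev-to-Low} automatically upgrades it to an $L^d$-Loewner chain. To prove the association, fix $s\geq 0$ and $z\in M$ and consider the auxiliary function $h(u):=f_u(\v_{s,u}(z))$ for $u\in[s,+\infty)$. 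Write
\[
h(u+\epsilon)-h(u) \;=\; \bigl[f_{u+\epsilon}(\v_{s,u+\epsilon}(z))-f_{u+\epsilon}(\v_{s,u}(z))\bigr] \;+\; \bigl[f_{u+\epsilon}(\v_{s,u}(z))-f_u(\v_{s,u}(z))\bigr].
\]
Dividing by $\epsilon$, the first bracket tends almost everywhere to $(df_u)_{\v_{s,u}(z)}G(\v_{s,u}(z),u)$ via the ODE $\de\v_{s,\tau}/\de\tau=G(\v_{s,\tau}(\cdot),\tau)$ and continuity of the differentials of $f_{u+\epsilon}$, while the second tends to $\de f_u/\de u(\v_{s,u}(z)) = -(df_u)_{\v_{s,u}(z)}G(\v_{s,u}(z),u)$ by the PDE hypothesis. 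The two contributions cancel, so $h'(u)=0$ almost everywhere; local absolute continuity of $h$ (which follows by combining the $L^d$-bound on $G$ with the local absolute continuity hypothesis on $(f_t)$) then forces $h$ to be constant, giving $f_s(z)=h(s)=h(t)=f_t(\v_{s,t}(z))$.

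The principal technical obstacle is the rigorous justification of the chain rule for $h$: namely, that the Taylor remainder of $f_{u+\epsilon}$ expanded at $\v_{s,u}(z)$ is $o(\epsilon)$ uniformly in $\epsilon$ small, and that $(df_{u+\epsilon})_{\v_{s,u}(z)}\to (df_u)_{\v_{s,u}(z)}$ as $\epsilon\to 0$. As in the proof of Theorem \ref{ev-to-Low}, this is handled by passing to local coordinates in which $f_{u+\epsilon}$ is bounded on a fixed Euclidean neighborhood, applying Cauchy estimates to obtain equi-$C^1$ control on the holomorphic family, and finally invoking dominated convergence to integrate the a.e.\ identity $h'=0$.
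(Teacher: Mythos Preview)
Your proof is correct and the converse direction matches the paper's argument exactly: define $h(u)=f_u(\v_{s,u}(z))$, show $h'=0$ a.e.\ via the PDE and the ODE, conclude constancy by absolute continuity, and then invoke Theorem \ref{ev-to-Low}.

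For the direct implication you take a slightly different route. The paper observes that, since $(f_t)$ is already associated with $(\v_{s,t})$, the map $t\mapsto L_t(z):=f_t(\v_{0,t}(z))$ is \emph{identically} equal to $f_0(z)$; differentiating this constant and applying the chain rule yields
\[
0=(df_t)_{\v_{0,t}(z)}G(\v_{0,t}(z),t)+\frac{\partial f_t}{\partial t}(\v_{0,t}(z)),
\]
which is the PDE at points of the open set $\v_{0,t}(M)$, and the identity theorem extends it to all of $M$. You instead compute the difference quotient of $f_s$ directly from $f_s=f_{s+h}\circ\v_{s,s+h}$ via a Taylor expansion of $f_{s+h}$. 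Your approach has the advantage of producing the PDE at every $z\in M$ without the identity-theorem step; the paper's approach is marginally cleaner in that the chain-rule justification is the \emph{same} computation already needed (and carried out) in the converse, so no separate Taylor-remainder analysis is required.
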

\begin{proof}
Since $G(z,t)$ and $(\v_{s,t})$ are associated there exists a null set $E_1\subset \R^+$ such that  for all $s\geq 0$, for all $t\in [s,+\infty)\setminus E_1$ and for all $z\in M$,
\[
\frac{\partial \varphi_{s,t}}{\partial t}(z)=G(\varphi_{s,t}(z),t).
\]

Let now $(f_t)$ be an $L^d$-Loewner chain associated with $(\v_{s,t})$.
By Proposition \ref{vitali}, there is a null set $E_2\subset
\R^+$ such that $z\mapsto \frac{\partial f_s}{\partial s}(z)$
is well defined and holomorphic for all $s\in (0,+\infty)\setminus E_2$.
The set $E=E_1\cup E_2$ has also zero measure. It is clear that the mapping
$$t\mapsto L_t(z):=f_t(\varphi_{0,t}(z))$$ is locally absolutely continuous on $\R^+$
locally uniformly with respect to $z\in M$, in view of the conditions (\ref{LCdef}) and
(\ref{ck-evd}). Also $L_t(z)=f_0(z)$ for $z\in M$. Differentiating the last equality with respect to
$t\in (0,+\infty)\setminus E$ we obtain
\begin{equation*}
\begin{split}
0&=(df_t)_{\varphi_{0,t}(z)}\frac{\partial \varphi_{0,t}}{\partial t}(z)+
\frac{\partial f_t}{\partial t}(\varphi_{0,t}(z))\\&=(df_t)_{\varphi_{0,t}(z)}G(\varphi_{0,t}(z),t)+
\frac{\partial f_t}{\partial t}(\varphi_{0,t}(z)),
\end{split}
\end{equation*}
for all $t\in (0,+\infty)\setminus E$ and for all $z\in M$.
Hence
$$\frac{\partial f_t}{\partial t}(w)=
-(df_t)_{w}G(w,t),$$
for all $w$ in the open set $\varphi_{0,t}(M)$ and for all $t\in (0,+\infty)\setminus E$.
The identity theorem for holomorphic mappings provides the result.
\\
To prove the converse, fix $s\geq 0$ and let
$$L_t(z):=f_t(\varphi_{s,t}(z))$$ for $t\in [s,+\infty)$ and $z\in M$.
In view of the hypothesis, it is not difficult to deduce that
$$\frac{\partial L_t}{\partial t}(z)=0,\quad \mbox{ a.e. }\ t\geq 0,\quad
\forall z\in M.$$
Hence $L_t(z)\equiv L_s(z)$, i.e. $f_t(\varphi_{s,t}(z))=f_s(z)$
for all $z\in M$ and $0\leq s\leq t$, which means that $(f_t)$ is an algebraic Loewner chain associated with $(\v_{s,t})$.
Hence  $(f_t)$ is an $L^d$-Loewner chain by Theorem \ref{ev-to-Low}.
\end{proof}

\begin{corollary}\label{corollaryLK-PDE}
Let $M$ be a complete hyperbolic complex manifold such that  the Kobayashi
distance $k_M\in C^1(M\times M\setminus \hbox{Diag})$, and let $N$ be a complex manifold of the same dimension. Every $L^\infty$-Loewner chain $(f_t\colon M\to N)$   solves a Loewner-Kufarev PDE.
\end{corollary}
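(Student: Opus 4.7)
The plan is to chain together three results already established in the paper: Theorem \ref{Low-to-ev} (which turns a Loewner chain into an evolution family), Theorem \ref{prel-thm} (which produces a Herglotz vector field generating a given $L^\infty$-evolution family), and Theorem \ref{LK-PDE} (which identifies Loewner chains associated with an evolution family as solutions of the corresponding Loewner-Kufarev PDE).

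First, starting from an $L^\infty$-Loewner chain $(f_t\colon M\to N)$, I would define $\v_{s,t}:=f_t^{-1}\circ f_s$ for $0\leq s\leq t$. By Theorem \ref{Low-to-ev}, $(\v_{s,t})$ is an $L^\infty$-evolution family on $M$ associated with $(f_t)$; here we use that $M$ is complete hyperbolic, which is part of the standing hypothesis of Corollary \ref{corollaryLK-PDE}.

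Next, the converse direction of Theorem \ref{prel-thm} is tailored exactly to the $L^\infty$ setting: it asserts that any $L^\infty$-evolution family on a complete hyperbolic manifold $M$ with $k_M\in C^1(M\times M\setminus\mathrm{Diag})$ is generated by a Herglotz vector field $G$ of order $\infty$ via the Loewner-Kufarev ODE \eqref{solve}. Applying this to $(\v_{s,t})$ furnishes such a Herglotz vector field $G$.

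Finally, I would apply Theorem \ref{LK-PDE} in the ``only if'' direction: since $(f_t)$ is an $L^\infty$-Loewner chain associated with the $L^\infty$-evolution family $(\v_{s,t})$ whose Herglotz vector field is $G$, the family $(f_t)$ is locally absolutely continuous on $\R^+$ locally uniformly in $z\in M$ and satisfies the Loewner-Kufarev PDE \eqref{equationLK-PDE}. This completes the proof. The only delicate point is to notice that the existence of $G$ in Theorem \ref{prel-thm} is stated specifically for $L^\infty$-evolution families, which is precisely why the corollary is restricted to $L^\infty$-Loewner chains; there is no serious obstacle, as the work has all been done in the cited theorems.
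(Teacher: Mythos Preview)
Your proposal is correct and follows essentially the same approach as the paper: apply Theorem \ref{Low-to-ev} to obtain the associated $L^\infty$-evolution family, invoke the converse part of Theorem \ref{prel-thm} to produce the Herglotz vector field, and then use Theorem \ref{LK-PDE} to conclude that $(f_t)$ solves the Loewner--Kufarev PDE. Your added remark explaining why the corollary is stated only for $L^\infty$ is a nice clarification, but the argument itself matches the paper's proof exactly.
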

\begin{proof}
By Theorem \ref{Low-to-ev} there exists an $L^\infty$-evolution
family $(\varphi_{s,t})$  associated with
$(f_t)$. By Theorem \ref{prel-thm} there exists a Herglotz
vector field $G(z,t)$ associated with $(\v_{s,t})$. Theorem \ref{LK-PDE} yields then that the family $(f_t\colon M\to N)$ satisfies
$$\frac{\partial f_s}{\partial s}(z)=-(df_s)_zG(z,s),\quad\mbox{a.e. }s\geq 0,z\in M.$$
\end{proof}

From Theorems \ref{algebraic_theorem} and \ref{ev-to-Low} we easily obtain the following corollary. Let $G(z,t)$ be an $L^d$-Herglotz vector field associated with the $L^d$-evolution family $(\v_{s,t})$.
\begin{corollary}
Let $M$ be a complete hyperbolic complex manifold of dimension $n$ such that  the Kobayashi
distance $k_M\in C^1(M\times M\setminus \hbox{Diag})$. The Loewner-Kufarev PDE (\ref{equationLK-PDE}) admits a solution given by univalent mappings $(f_t\colon M\to N)$ where $N$  is a complex manifold $N$ of dimension $n$. Any other solution with values in a complex manifold $Q$ is of the form $(\Lambda\circ f_t)$ where $\Lambda\colon \rg(f_t)\to Q$ is  holomorphic. Hence a solution given by univalent mappings $(h_t\colon M\to \C^n)$ exists   if and only if the Loewner range $\Lr(\v_{s,t})$ is biholomorphic to a domain in $\C^n$.
\end{corollary}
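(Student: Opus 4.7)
The proof is essentially an assembly of the preceding structural results, so the plan is to trace the chain of equivalences Herglotz vector field $\leftrightarrow$ evolution family $\leftrightarrow$ Loewner chain $\leftrightarrow$ PDE solution and then read off the three assertions.

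First, for existence, I would start with the given $L^d$-Herglotz vector field $G$ and invoke Theorem \ref{prel-thm} to obtain the $L^d$-evolution family $(\v_{s,t})$ satisfying \eqref{solve}. Applying Theorem \ref{algebraic_theorem} to $(\v_{s,t})$ produces an associated algebraic Loewner chain $(f_t\colon M\to N)$ with $N=\rg(f_t)$. Since $M$ is taut (being complete hyperbolic), Theorem \ref{ev-to-Low} upgrades $(f_t)$ to an $L^d$-Loewner chain. Finally Theorem \ref{LK-PDE} tells us that $(f_t)$ solves the Loewner-Kufarev PDE \eqref{equationLK-PDE}, which yields the first assertion.

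Next, for the uniqueness/rigidity statement, suppose $(h_t\colon M\to Q)$ is any other solution by univalent mappings. By Theorem \ref{LK-PDE}, $(h_t)$ is then automatically an $L^d$-Loewner chain associated with the same evolution family $(\v_{s,t})$ (this is the crucial nontrivial input, since it guarantees the \emph{associated} property $h_s=h_t\circ \v_{s,t}$ from just satisfying the PDE). In particular $(h_t)$ is a subordination chain associated with $(\v_{s,t})$ consisting of univalent mappings, so the second half of Theorem \ref{algebraic_theorem} produces a univalent holomorphic mapping $\Lambda\colon \rg(f_t)\to Q$ with $h_t=\Lambda\circ f_t$ for all $t\geq 0$, as desired.

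For the final equivalence, if a solution $(h_t\colon M\to \C^n)$ given by univalent mappings exists, the previous step gives a univalent $\Lambda\colon \rg(f_t)\to \C^n$, so $\rg(f_t)$ is biholomorphic to the domain $\Lambda(\rg(f_t))\subset \C^n$; since $\Lr(\v_{s,t})$ is the biholomorphism class of $\rg(f_t)$, it is biholomorphic to a domain in $\C^n$. Conversely, if $\Lr(\v_{s,t})$ is biholomorphic to some domain $D\subset \C^n$ via a biholomorphism $\Lambda\colon \rg(f_t)\to D$, then $h_t:=\Lambda\circ f_t$ is a family of univalent mappings $M\to \C^n$ which forms an $L^d$-Loewner chain associated with $(\v_{s,t})$ (composition with a biholomorphism preserves both the associated property and the local Lipschitz estimate \eqref{LCdef}), hence by Theorem \ref{LK-PDE} solves the PDE.

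The only step with any subtlety is the appeal to Theorem \ref{LK-PDE} in the rigidity part, where one must be careful that ``solving the PDE'' is genuinely equivalent to being associated with $(\v_{s,t})$; everything else is a direct concatenation of already-established correspondences and presents no obstacle.
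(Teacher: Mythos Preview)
Your proposal is correct and matches the paper's own (one-line) argument, which simply records that the corollary follows from Theorems \ref{algebraic_theorem} and \ref{ev-to-Low} together with Theorem \ref{LK-PDE}. One minor refinement: the middle assertion of the corollary concerns \emph{any} solution $(h_t)$, not necessarily by univalent mappings, so rather than invoking Theorem \ref{LK-PDE} as stated you should use only the computation in its converse direction (which gives $h_s=h_t\circ\v_{s,t}$ without needing univalence) and then the subordination-chain clause of Theorem \ref{algebraic_theorem}; this is a trivial adjustment to what you wrote.
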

\section{Conjugacy}\label{conjugacy}
We introduce a notion of conjugacy for $L^d$-evolution families which preserves the Loewner range.  This can be used to put an $L^d$-evolution family in some normal form without changing its Loewner range (cf. \cite[Proposition 2.9]{CDG09}).
\begin{definition}
Let $d\in[1,+\infty].$
Let $(h_t\colon M \to Q)$ be an $L^d$-Loewner chain such that each $h_t\colon M\to Q$ is a biholomorphism. We call $(h_t\colon M\to Q)$ a family of {\sl intertwining mappings of order d}. If $(\v_{s,t}),(\psi_{s,t})$  are $L^d$-evolution families on $M,Q$ respectively and  $$\psi_{s,t}\circ h_s =h_t\circ \v_{s,t},\quad 0\leq s\leq t,$$ then we say that   $(\v_{s,t})$ and $(\psi_{s,t})$ are {\sl conjugate}. It is easy to see that conjugacy is an equivalence relation.
\end{definition}
\begin{lemma}\label{intercomp}
Let $(h_t\colon M\to Q)$ be a family of  intertwining mappings of order $d$ and let $(f_t \colon Q\to N)$ an $L^d$-Loewner chain. Then $(f_t\circ h_t\colon M\to N)$ is an $L^d$-Loewner chain.
\end{lemma}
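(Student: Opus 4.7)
The plan is to verify the three defining conditions LC1, LC2, LC3 for the composition $g_t := f_t \circ h_t \colon M \to N$. The first two are immediate. LC1: each $g_t$ is univalent because $h_t$ is a biholomorphism onto $Q$ and $f_t$ is univalent, so their composition is injective and holomorphic. LC2: for $0 \leq s \leq t$, since $h_s(M) = Q = h_t(M)$, one has $g_s(M) = f_s(Q) \subset f_t(Q) = g_t(M)$ by the range-inclusion property of the $L^d$-Loewner chain $(f_t)$.

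All the work is in establishing LC3. Fix a compact set $K \subset M$ and $T > 0$. By Remark \ref{conti} applied to $(h_t)$, the map $(z,t) \mapsto h_t(z)$ is jointly continuous, so
\[
K' := \{h_t(z) : z \in K,\ t \in [0,T]\}
\]
is compact in $Q$. Decompose by the triangle inequality:
\[
d_N(g_s(z), g_t(z)) \leq d_N(f_s(h_s(z)), f_t(h_s(z))) + d_N(f_t(h_s(z)), f_t(h_t(z)))
\]
for $z \in K$ and $0 \leq s \leq t \leq T$. The first term is controlled by applying LC3 for $(f_t)$ to the compact set $K' \subset Q$, yielding a bound of $\int_s^t k^f_{K',T}(\xi)\, d\xi$ with $k^f_{K',T} \in L^d([0,T], \mathbb{R}^+)$.

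The second term requires a uniform Lipschitz estimate for the family $\{f_t|_{K'}\}_{t \in [0,T]}$. By Remark \ref{conti} applied to $(f_t)$, the set $\Psi_f(K' \times [0,T])$ is compact in $N$, so the family $\{f_t|_{K'}\}_{t\in[0,T]}$ is equibounded in every coordinate chart of a Hermitian neighborhood of $K'$. Covering $K'$ by finitely many such charts and using Cauchy estimates produces a constant $C = C(K', T) > 0$ such that $d_N(f_t(w), f_t(w')) \leq C\, d_Q(w, w')$ for all $w, w' \in K'$ and $t \in [0,T]$. This is exactly the equi-Lipschitz argument carried out inside the proof of Theorem \ref{ev-to-Low}, which can either be quoted or reproduced. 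Combining with LC3 for the chain $(h_t)$ applied to $K$, the second term is bounded by $C \int_s^t k^h_{K,T}(\xi)\, d\xi$.

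Summing, we get
\[
d_N(g_s(z), g_t(z)) \leq \int_s^t \bigl( k^f_{K',T}(\xi) + C\, k^h_{K,T}(\xi) \bigr)\, d\xi,
\]
and the integrand belongs to $L^d([0,T], \mathbb{R}^+)$. The main obstacle is the equi-Lipschitz bound for $\{f_t|_{K'}\}_{t \in [0,T]}$; since this is precisely the content worked out in the proof of Theorem \ref{ev-to-Low} (a local Cauchy-estimate argument using the joint continuity from Remark \ref{conti}), the cleanest presentation is simply to invoke that reasoning verbatim.
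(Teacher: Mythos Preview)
Your proof is correct and follows essentially the same route as the paper: the same compact set $K'=\hat K$, the same triangle-inequality splitting, the same appeal to the equi-Lipschitz estimate from the proof of Theorem \ref{ev-to-Low} (which the paper cites as \eqref{estimatelip}), and the same use of LC3 for both $(f_t)$ and $(h_t)$. The only cosmetic difference is the order of the two terms in the triangle inequality.
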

\begin{proof}
It is clear that $(f_t\circ h_t\colon M\to N)$ is an algebraic Loewner chain. Let $T>0$ and let  $K\subset M$ be a compact set. The set $\hat K:=\bigcup_{0\leq t\leq T}h_t(K)\subset Q$ is compact by Lemma \ref{conti}, and the family  $(f_t)_{0\leq t\leq T}$ is equi-Lipschitz on $\hat K$ (see (\ref{estimatelip})). Thus if $0\leq s\leq t\leq T$ and $z\in K$,
\begin{align}
 d_N(f_t(h_t(z)),f_s(h_s(z)))&\leq d_N(f_t(h_t(z)),f_t(h_s(z)))+d_N(f_t(h_s(z)),f_s(h_s(z)))\\ \nonumber
&\leq  L(\hat K,T)d_Q(h_t(z),h_s(z))       +\int_s^t k_{\hat K,T}(\xi) d\xi\\\nonumber
&\leq  L(\hat K,T) \int_s^t h_{ K,T}(\xi) d\xi +\int_s^t k_{\hat K,T}(\xi) d\xi. \nonumber
\end{align}
\end{proof}
\begin{remark}
Two conjugated $L^d$-evolution families have essentially the same associated $L^d$-Loewner chains. Namely, if $(g_t\colon Q\to N)$ is an $L^d$-Loewner chain associated with $(\psi_{s,t})$ which is conjugate to  $(\v_{s,t})$ on $M$ via $(h_t\colon M\to Q)$, then $(g_t\circ h_t\colon M\to N)$ is an $L^d$-Loewner chain associated with $(\v_{s,t})$. In particular, $\Lr(\v_{s,t})$ is biholomorphic to $\Lr(\psi_{s,t}).$
\end{remark}
\begin{proposition}\label{L^dconj}
Let $(\psi_{s,t})$ be an $L^d$-evolution family on a complete hyperbolic complex manifold $Q$ and let $(h_t\colon M\to Q)$ be a family of intertwining mappings of order $d$. Then the family $(h_t^{-1}\circ\psi_{s,t}\circ h_s)$ is an  $L^d$-evolution family on $M$.
\end{proposition}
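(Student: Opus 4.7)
The plan is to reduce the statement to facts already established: specifically Theorem \ref{ev-to-Low}, Theorem \ref{Low-to-ev}, and Lemma \ref{intercomp}. The chain of reasoning is short but uses the ``Loewner chain viewpoint'' as the crucial bridge, so that we never have to estimate $d_M(\v_{s,t}(z),\v_{s,u}(z))$ directly.

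First I would check the purely algebraic part. Each mapping $\v_{s,t}:=h_t^{-1}\circ\psi_{s,t}\circ h_s\colon M\to M$ is well defined and univalent as a composition of the biholomorphisms $h_s,h_t^{-1}$ with the univalent self-mapping $\psi_{s,t}$ of $Q$. The evolution property follows from a one-line computation:
\begin{equation*}
\v_{u,t}\circ\v_{s,u}=h_t^{-1}\circ\psi_{u,t}\circ h_u\circ h_u^{-1}\circ\psi_{s,u}\circ h_s=h_t^{-1}\circ\psi_{s,t}\circ h_s=\v_{s,t},
\end{equation*}
and $\v_{s,s}=\id$ is immediate. Thus $(\v_{s,t})$ is at least an algebraic evolution family, and note also that $M$ inherits complete hyperbolicity from $Q$ via any biholomorphism $h_t$.

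Next I would obtain an $L^d$-Loewner chain attached to $(\psi_{s,t})$. By Theorem \ref{ev-to-Low} (applied to the $L^d$-evolution family $(\psi_{s,t})$ on the taut manifold $Q$) there exists an associated $L^d$-Loewner chain $(g_t\colon Q\to N)$. Being associated means $g_s=g_t\circ\psi_{s,t}$, equivalently $\psi_{s,t}=g_t^{-1}\circ g_s$ on $g_t(Q)$.

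Now I would pull this chain back to $M$. By Lemma \ref{intercomp}, since $(h_t\colon M\to Q)$ is a family of intertwining mappings of order $d$ and $(g_t\colon Q\to N)$ is an $L^d$-Loewner chain, the composed family
\begin{equation*}
(g_t\circ h_t\colon M\to N)
\end{equation*}
is an $L^d$-Loewner chain. Applying Theorem \ref{Low-to-ev} to this chain on the complete hyperbolic manifold $M$ yields that
\begin{equation*}
(g_t\circ h_t)^{-1}\circ(g_s\circ h_s)=h_t^{-1}\circ g_t^{-1}\circ g_s\circ h_s=h_t^{-1}\circ\psi_{s,t}\circ h_s=\v_{s,t}
\end{equation*}
is an $L^d$-evolution family on $M$, which is exactly the claim.

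There is really no main obstacle, only two small things to verify en route: that $M$ is complete hyperbolic (which is automatic since each $h_t$ is a biholomorphism $M\to Q$ and the Kobayashi distance is biholomorphically invariant), and that Lemma \ref{intercomp} genuinely applies with $(h_t)$ playing the role of the intertwining chain and $(g_t)$ the Loewner chain on $Q$. Both are direct from the definitions, so the whole argument amounts to assembling the right previously-proved facts in the correct order.
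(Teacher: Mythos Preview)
Your proof is correct and follows essentially the same route as the paper: obtain an $L^d$-Loewner chain associated with $(\psi_{s,t})$ via Theorem \ref{ev-to-Low}, compose with $(h_t)$ using Lemma \ref{intercomp}, and then apply Theorem \ref{Low-to-ev} to the resulting chain on $M$. Your extra remarks on the algebraic evolution property and on why $M$ is complete hyperbolic are useful clarifications that the paper leaves implicit.
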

\begin{proof}
By Theorem \ref{ev-to-Low}, there exists an $L^d$-Loewner chain $(f_t\colon Q\to N)$ associated with $(\psi_{s,t})$. By Lemma \ref{intercomp} the family $(f_t\circ h_t)$ defines an $L^d$-Loewner chain $(g_t\colon M\to N)$.  Then
$$h_t^{-1}\circ\psi_{s,t}\circ h_s=h_t^{-1}\circ f_t^{-1}\circ f_s \circ h_s=g_t^{-1}\circ g_s,$$ which by Theorem \ref{Low-to-ev} is an $L^d$-evolution family on $M$.
\end{proof}

Let now $M$ be the unit ball $\B^n$.
\begin{definition}
Take $a\in\B^n$. Let $P_a(z):=\frac{\la z,a\ra}{\|a\|^2}a$ for $a\neq 0$, $P_0=0$,
$Q_a(z):=z-P_a(z)$ and $s_a:=(1-\| a\|^2)^{1/2}$. Then
\[
\varphi_a(z):=\frac{a-P_a(z)-s_aQ_a(z)}{1-\langle z,a\rangle}
\]
is an automorphism of the ball $\B^n$ (see, {\sl e.g.},
\cite{abate} or \cite{R}).
\end{definition}
We can now show that in order to study the Loewner range of an $L^d$-evolution family  on $\B^n$ one can assume that it fixes the origin.
\begin{corollary}
Let $(\psi_{s,t})$ be an $L^d$-evolution family on $\B^n$. There exists a conjugate $L^d$-evolution family $(\v_{s,t})$ such that $$\v_{s,t}(0)=0,\quad 0\leq s\leq t.$$
\end{corollary}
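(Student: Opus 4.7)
The plan is to conjugate $(\psi_{s,t})$ by the family of involutive automorphisms of $\B^n$ that at time $t$ swap $0$ with $a(t):=\psi_{0,t}(0)$. Concretely, set $h_t:=\varphi_{a(t)}$; then $h_t(0)=a(t)$ and $h_t\circ h_t=\id$, since $\varphi_a$ is the standard Möbius involution of the ball. If I can show that $(h_t\colon\B^n\to\B^n)$ is a family of intertwining mappings of order $d$, then Proposition \ref{L^dconj} will immediately give that $\v_{s,t}:=h_t^{-1}\circ\psi_{s,t}\circ h_s$ is an $L^d$-evolution family on $\B^n$ conjugate to $(\psi_{s,t})$. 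The required normalization $\v_{s,t}(0)=0$ is then automatic from $h_s(0)=a(s)$, the evolution identity $\psi_{s,t}(\psi_{0,s}(0))=\psi_{0,t}(0)=a(t)=h_t(0)$, and $h_t^{-1}(h_t(0))=0$.

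For $(h_t)$ to be a family of intertwining mappings of order $d$, conditions LC1 and LC2 in the definition of an $L^d$-Loewner chain are trivial, since each $h_t$ is a biholomorphism of $\B^n$ onto itself. The only real task, and the main (though still routine) obstacle, is to verify the $L^d$-estimate LC3 for $(h_t)$. The defining inequality (\ref{ck-evd}) for $(\psi_{s,t})$, applied to the singleton $K_0=\{0\}$ and $s=0$, produces $c_T\in L^d([0,T],\R^+)$ such that
$$d_{\B^n}(a(u),a(t))\leq\int_u^t c_T(\xi)\,d\xi,\quad 0\leq u\leq t\leq T;$$
in particular by Lemma \ref{jointly} the image $A:=a([0,T])$ is a compact subset of $\B^n$. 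The mapping $(a,z)\mapsto\varphi_a(z)$ is real-analytic on $\B^n\times\B^n$, so on $A\times K$, for any compact $K\subset\B^n$, it is Lipschitz in $a$ uniformly for $z\in K$. Combining this with the comparability of the Hermitian and Euclidean distances on compacta of $\B^n$ yields an inequality of the form
$$d_{\B^n}(h_s(z),h_t(z))\leq C_{K,T}\int_s^t c_T(\xi)\,d\xi,\quad z\in K,\ 0\leq s\leq t\leq T,$$
which is exactly LC3.

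Once this regularity is in hand, Proposition \ref{L^dconj} gives that $(\v_{s,t})$ is an $L^d$-evolution family on $\B^n$, and the first paragraph identifies $0$ as a common fixed point of all the $\v_{s,t}$'s. The only delicate ingredient in the whole argument is the $L^d$-control of $(h_t)$, which reduces to the $L^d$-control of the single trajectory $t\mapsto\psi_{0,t}(0)$ via the smooth dependence of the automorphisms $\varphi_a$ on the parameter $a$.
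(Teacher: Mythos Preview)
Your proof is correct and follows essentially the same approach as the paper: conjugate by $h_t=\varphi_{a(t)}$ with $a(t)=\psi_{0,t}(0)$, verify that $(h_t)$ is a family of intertwining mappings of order $d$ via the smooth dependence of $\varphi_a$ on $a$ and the $L^d$-control of $t\mapsto a(t)$, and then invoke Proposition~\ref{L^dconj}. The only cosmetic difference is that the paper deduces $\v_{s,t}(0)=0$ from $\v_{0,t}(0)=0$ and the evolution property, whereas you verify it directly.
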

\begin{proof}
Set $a(t):=\psi_{0,t}(0)$. Since $$\|\v_{a(t)}(w)-\v_{a(s)}(w)\|\leq C(K,T)\|a(t)-a(s)\|,\quad w\in K,\ 0\leq s\leq t\leq T,$$ the family $(\v_{a(t)})$ is a  family of intertwining mappings of order $d$.
Define $$\v_{s,t}:=\v_{a(t)}^{-1}\circ \psi_{s,t}\circ \v_{a(s)},$$ which is an $L^d$-evolution family by Proposition \ref{L^dconj}. Since $\v_{a(t)}(0)=a(t),$ we have $\v_{0,t}(0)=0$ for all $t\geq 0$, and by the evolution property $\v_{s,t}(0)=0$ for all $0\leq s\leq t$.
\end{proof}

\section{Extension of Loewner chains from lower dimensional
balls}\label{ss62} The following result provides examples of
$L^d$-Loewner chains on the Euclidean unit ball $\B^n$ in
$\mathbb{C}^n$, which are generated by the Roper-Suffridge
extension operator \cite{RS}. This operator preserves convexity
(see \cite{RS}), starlikeness and the notion of parametric
representation (see e.g. \cite{GK03} and the references
therein).

\begin{theorem}
\label{t1}
Let $d\in [1,+\infty]$ and $(f_t,\D,\C)$ be an $L^d$-Loewner
chain on the unit disc $\D$ such that $|\arg f_t'(0)|<\pi/2$ and
$|\arg(f_s'(0)/f_t'(f_t^{-1}\circ f_s(0)))|<\pi/2$ for $t\geq s\geq 0$.
Also let $(F_t\colon \B^n\to \C^n)$ be given by
\begin{equation}
\label{1}
F_t(z)=\Big(f_t(z_1),\tilde{z}e^{t/2}\sqrt{f_t'(z_1)}\Big),\quad
z=(z_1,\tilde{z})\in \B^n,\quad t\geq 0.
\end{equation}
Then $(F_t)$ is an $L^d$-Loewner chain.
\end{theorem}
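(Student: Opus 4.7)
The plan is to produce the natural associated evolution family on $\B^n$, verify it is $L^d$, and then appeal to the results of the paper. By Theorem \ref{Low-to-ev}, $(\v_{s,t}):=(f_t^{-1}\circ f_s)$ is an $L^d$-evolution family on $\D$. The hypothesis $|\arg(f_s'(0)/f_t'(f_t^{-1}\circ f_s(0)))|<\pi/2$ is the same as $\Re \v_{s,t}'(0)>0$, and since $\v_{s,t}$ is univalent on the simply connected disc, $\v_{s,t}'$ is nowhere zero, so I can select a single-valued holomorphic branch $\sqrt{\v_{s,t}'(z_1)}$ on $\D$ with $\Re \sqrt{\v_{s,t}'(0)}>0$. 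The condition $|\arg f_t'(0)|<\pi/2$ selects the square root in the definition of $F_t$, and the identity $f_s'=f_t'(\v_{s,t})\cdot \v_{s,t}'$ shows these three branches are consistent.

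Guided by the requirement $F_s=F_t\circ \Phi_{s,t}$ I would define
\[
\Phi_{s,t}(z_1,\tilde{z}):=\Big(\v_{s,t}(z_1),\,e^{(s-t)/2}\,\tilde{z}\,\sqrt{\v_{s,t}'(z_1)}\,\Big),\qquad 0\le s\le t,
\]
and check in order: (a) $\Phi_{s,t}(\B^n)\subset \B^n$, using the Schwarz--Pick inequality $|\v_{s,t}'(z_1)|\le (1-|\v_{s,t}(z_1)|^2)/(1-|z_1|^2)$ together with $e^{s-t}\le1$ and $|\tilde z|^2<1-|z_1|^2$; (b) $\Phi_{s,s}=\id$ and the cocycle relation $\Phi_{u,t}\circ\Phi_{s,u}=\Phi_{s,t}$, where the only nontrivial point is $\sqrt{\v_{u,t}'(\v_{s,u}(z_1))}\sqrt{\v_{s,u}'(z_1)}=\sqrt{\v_{s,t}'(z_1)}$, which holds on a neighborhood of $z_1=0$ by the choice of branches and then on all of $\D$ by the identity principle; (c) the functional equation $F_s=F_t\circ \Phi_{s,t}$, which is a direct computation using $f_s=f_t\circ\v_{s,t}$ and $f_s'=f_t'(\v_{s,t})\cdot\v_{s,t}'$; (d) univalence of $F_t$ on $\B^n$ (immediate from univalence of $f_t$ and $f_t'\neq0$).

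To get the $L^d$-estimate in Definition \ref{L^d_EF} for $(\Phi_{s,t})$, fix $T>0$ and a compact $K\subset\B^n$, and let $K_1\subset\D$ be its projection on the first coordinate. The first coordinate of $\Phi_{s,t}$ inherits its $L^d$-bound from $(\v_{s,t})$. For the second coordinate I would use a Cauchy estimate: pick a disc $D'\subset\subset\D$ containing a neighborhood of $K_1$, so that the $L^d$-estimate on $(\v_{s,t})$ in $D'$ combined with Cauchy's formula
\[
\v_{s,t}'(z_1)-\v_{s,u}'(z_1)=\frac{1}{2\pi i}\int_{\partial D'}\frac{\v_{s,t}(\zeta)-\v_{s,u}(\zeta)}{(\zeta-z_1)^2}\,d\zeta
\]
gives $|\v_{s,t}'(z_1)-\v_{s,u}'(z_1)|\le C_{K,T}\int_u^t c_{T,D'}(\xi)\,d\xi$ for $z_1\in K_1$; since $\v_{s,t}'$ stays bounded away from $0$ uniformly on $K_1$ for $0\le s\le t\le T$ (by continuity from Lemma \ref{jointly} and univalence), the same $L^d$-bound passes to $\sqrt{\v_{s,t}'}$. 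Combined with the trivial bound on $|e^{(s-t)/2}-e^{(s-u)/2}|$ this yields the required estimate for $\Phi_{s,t}$.

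With (a)--(d) and the $L^d$-estimate, $(\Phi_{s,t})$ is an $L^d$-evolution family on $\B^n$, $(F_t)$ is an algebraic Loewner chain associated to it by (c), and Theorem \ref{ev-to-Low} concludes that $(F_t)$ is an $L^d$-Loewner chain. The main obstacle I expect is step (d): turning the $L^d$-control of $\v_{s,t}$ into $L^d$-control of $\sqrt{\v_{s,t}'}$ without losing uniformity in $(s,t)\in\Delta_T$; the Cauchy integral trick above, together with the uniform nonvanishing of $\v_{s,t}'$ on compacta that comes from joint continuity of the evolution family, is the mechanism that makes it go through.
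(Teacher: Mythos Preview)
Your argument is correct and follows the same overall architecture as the paper: you build the same candidate evolution family $\Phi_{s,t}(z_1,\tilde z)=(\v_{s,t}(z_1),\,e^{(s-t)/2}\tilde z\sqrt{\v_{s,t}'(z_1)})$, verify $\Phi_{s,t}(\B^n)\subset\B^n$ via Schwarz--Pick, check $F_s=F_t\circ\Phi_{s,t}$ so that $(F_t)$ is an algebraic Loewner chain, and finish with Theorem~\ref{ev-to-Low}. The one substantive difference is how the $L^d$-regularity of $(\Phi_{s,t})$ is obtained. The paper passes through Herglotz vector fields: it invokes \cite[Theorem~6.2]{BCD08} to produce a Herglotz field $g$ of order $d$ for $(\v_{s,t})$, defines $G(z,t)=\big(g(z_1,t),\tfrac{\tilde z}{2}(-1+g'(z_1,t))\big)$, checks that $G$ is a weak holomorphic vector field of order $d$ with $\partial_t\Phi_{s,t}=G(\Phi_{s,t},t)$, and then cites \cite{BCD09} to conclude that $G$ is Herglotz and $(\Phi_{s,t})$ is an $L^d$-evolution family. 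Your route is more elementary and stays entirely within the present paper: you transfer the $L^d$-increment bound from $\v_{s,t}$ to $\v_{s,t}'$ by a Cauchy integral, and then to $\sqrt{\v_{s,t}'}$ using that $\v_{s,t}'$ is bounded away from $0$ on compacta uniformly for $(s,t)\in\Delta_T$ (Lemma~\ref{jointly} plus univalence). This avoids the external Herglotz machinery and in particular does not need the disc result from \cite{BCD08}; on the other hand, the paper's approach has the advantage of exhibiting the associated Herglotz field on $\B^n$ explicitly. One small point worth making precise in your write-up: the passage from the $L^d$-bound on $\v_{s,t}'$ to one on $\sqrt{\v_{s,t}'}$ is cleanest via absolute continuity of $t\mapsto\v_{s,t}'(z_1)$ together with $|\partial_t\sqrt{\v_{s,t}'(z_1)}|=|\partial_t\v_{s,t}'(z_1)|/(2|\sqrt{\v_{s,t}'(z_1)}|)\le |\partial_t\v_{s,t}'(z_1)|/(2\sqrt m)$, rather than the algebraic identity $\sqrt a-\sqrt b=(a-b)/(\sqrt a+\sqrt b)$, which would require controlling $|\sqrt a+\sqrt b|$ from below.
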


\begin{proof}
It is easy to see that $F_t$ is univalent on $\B^n$ for
$t\geq 0$.
Let $(\varphi_{s,t})$ be the
$L^d$-evolution family associated with $(f_t)$ (see Theorem
\ref{Low-to-ev} or \cite{BCD08}). Also let
$\Phi_{s,t}:\B^n\to\mathbb{C}^n$ be given by
$$\Phi_{s,t}(z)=\Big(\varphi_{s,t}(z_1),\tilde{z}e^{(s-t)/2}\sqrt{\varphi_{s,t}'
(z_1)}\Big),\quad
z=(z_1,\tilde{z})\in \B^n,\, t\geq s\geq 0.$$ Then $\Phi_{s,t}$
is a univalent mapping on $\B^n$ and in view of the
Schwarz-Pick lemma, we have
$$\|\Phi_{s,t}(z)\|^2=|\varphi_{s,t}(z_1)|^2+\|\tilde{z}\|^2e^{s-t}|
\varphi_{s,t}'(z_1)|$$
$$<
|\varphi_{s,t}(z_1)|^2+(1-|z_1|^2)e^{s-t}\cdot\frac{1-|\varphi_{s,t}(z_1)|^2}{1-
|z_1|^2}\leq 1,
\, z\in \B^n,\, t\geq s\geq 0.$$ Hence
$\Phi_{s,t}(\B^n)\subseteq \B^n$, and since
$F_s(z)=F_t(\Phi_{s,t}(z))$ for $z\in \B^n$ and $t\geq s\geq
0$, we obtain that $F_s(\B^n)\subseteq F_t(\B^n)$ for $s\leq
t$. In view of the above relations, we deduce that $(F_t)$ is an
algebraic Loewner chain and $(\Phi_{s,t})$ is the associated algebraic evolution
family.

It remains to prove that $(F_t)$ is of order $d$. Since
$(\varphi_{s,t})$ is an evolution family of order $d$, we deduce in view of
\cite[Theorem 6.2]{BCD08} that there
exists a Herglotz vector field $g(z_1,t)$ of order $d$ such
that
$$\frac{\partial \varphi_{s,t}}{\partial t}(z_1)=g(\varphi_{s,t}(z_1),t),\quad
\mbox{ a.e. }\quad t\in [s,+\infty),\quad \forall z_1\in \D.$$
Now, let $G=G(z,t):\B^n\times \R^+\to\mathbb{C}^n$ be
given by
$$G(z,t)=\Big(g(z_1,t),\frac{\tilde{z}}{2}(-1+g'(z_1,t))\Big),
\quad z=(z_1,\tilde{z})\in \B^n,\, t\geq 0.$$ Then $G(z,t)$ is
a weak holomorphic vector field of order $d$ on $\B^n$. Indeed,
the measurability and holomorphicity conditions from the
definition of a weak holomorphic vector field are satisfied. We
next prove that for each $r\in (0,1)$ and $T>0$, there exists
$C_{r,T}\in L^d([0,T],\mathbb{R})$ such that
$$\|G(z,t)\|\leq C_{r,T}(t),\quad \|z\|\leq r,\quad t\in [0,T].$$
But the above condition can be easily deduced by using the fact
that $g(z_1,t)$ is a Herglotz vector field of order $d$ on $\D$
and by the Cauchy integral formula.

On the other hand, since $\varphi_{s,t}$ is locally absolutely
continuous on $[s,+\infty)$ locally uniformly with respect to
$z_1\in \D$, it follows in view of Vitali's theorem (see e.g.
\cite[Chapter 6]{Po}) that
$$\frac{\partial}{\partial t}\Big(\frac{\partial \varphi_{s,t}(z_1)}{\partial
z_1}\Big)=
\frac{\partial }{\partial z_1}\Big(\frac{\partial \varphi_{s,t}(z_1)}{\partial
t}\Big),
\quad \mbox{ a.e. }\quad t\geq s,\quad \forall z_1\in \D.$$
Using the above equality, we obtain by elementary
computations that
$$\frac{\partial \Phi_{s,t}(z)}{\partial t}=G(\Phi_{s,t}(z),t),\quad \mbox{ a.e. }\quad
t\geq s,\quad
\forall z\in \B^n.$$
Therefore, as in the proof of \cite[Proposition 2]{BCD09},
we deduce that
$(dk_M)_{(z,w)}(G(z,t),G(w,t))\leq 0$ for a.e. $t\in\R^+$,
$z\neq w$.
Hence $G(z,t)$ is a Herglotz vector field of order $d$ on
$\B^n$.
Also, as in the proof of \cite[Proposition 1]{BCD09},
we deduce that $(\Phi_{s,t})$ is an evolution family
of order $d$.
Finally, we
conclude that the associated algebraic Loewner chain $(F_t)$ is of order
$d$ on $\B^n$ by Theorem \ref{ev-to-Low}. This completes the proof.
\end{proof}

\begin{corollary}
\label{c1} Let $f:\D\to\mathbb{C}$ be a univalent function such that
$|\arg f'(0)|<\pi/2$. Assume that $(f_t)$ is an $L^d$-Loewner
chain on $\D$ such that $f_0=f$,
$|\arg f_t'(0)|<\pi/2$ for $t\geq 0$,
and $|\arg(f_s'(0)/f_t'(f_t^{-1}\circ f_s(0)))|<\pi/2$
for $t\geq s\geq 0$.
Then $F=\Phi_{n}(f)$ can be imbedded in a
$L^d$-Loewner chain on $\B^n$, where $\Phi_n$ is the
Roper-Suffridge extension operator,
$$\Phi_n(f)(z)=(f(z_1),\tilde{z}\sqrt{f'(z_1)}),\quad z=(z_1,\tilde{z})\in \B^n.$$
\end{corollary}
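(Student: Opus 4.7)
The plan is an immediate application of Theorem \ref{t1}. The hypotheses of Corollary \ref{c1} include precisely the assumptions needed there: $(f_t)$ is an $L^d$-Loewner chain on $\D$ satisfying $|\arg f_t'(0)|<\pi/2$ for $t\geq 0$ and $|\arg(f_s'(0)/f_t'(f_t^{-1}\circ f_s(0)))|<\pi/2$ for $t\geq s\geq 0$. Hence Theorem \ref{t1} produces an $L^d$-Loewner chain $(F_t\colon \B^n\to\C^n)$ given by formula (\ref{1}). The square roots appearing in that formula are understood via the principal branch, which is well-defined because $f_t$ is univalent (so $f_t'$ has no zeros on the simply-connected disc $\D$) and because the condition $|\arg f_t'(0)|<\pi/2$ pins down which branch is chosen. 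In particular, $\sqrt{f_0'}$ coincides with the branch used to form $\Phi_n(f)$ thanks to the hypothesis $|\arg f'(0)|<\pi/2$.

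To conclude, simply evaluate the chain at $t=0$:
$$F_0(z)=\bigl(f_0(z_1),\,\tilde{z}\,e^{0/2}\sqrt{f_0'(z_1)}\bigr)=\bigl(f(z_1),\,\tilde{z}\sqrt{f'(z_1)}\bigr)=\Phi_n(f)(z)=F(z).$$
Thus $F$ is realized as the initial element of the $L^d$-Loewner chain $(F_t)$ on $\B^n$, which is precisely the asserted embedding. There is no genuine obstacle: the hypotheses of the corollary were arranged to match those of Theorem \ref{t1}, so the argument is a formal unpacking, with the only minor bookkeeping point being the consistency of the branch of $\sqrt{f'(z_1)}$ used in defining $\Phi_n(f)$ with the branch of $\sqrt{f_0'(z_1)}$ produced by Theorem \ref{t1}.
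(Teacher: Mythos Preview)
Your proof is correct and follows exactly the paper's approach: the paper's proof is a single sentence stating that the desired $L^d$-Loewner chain is given by formula (\ref{1}), i.e., the chain constructed in Theorem \ref{t1}. You have simply spelled out explicitly why $F_0=\Phi_n(f)$ and addressed the branch consistency, which the paper leaves implicit.
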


\begin{proof}
The desired $L^d$-Loewner chain is given by (\ref{1}).
\end{proof}

\section{Spiral-shapedness and Star-shapedness}
\label{ss63}

\begin{definition}
\label{d3.1} Let $\Omega\subset\mathbb{C}^n$ and let $A\in
L(\mathbb{C}^n,\mathbb{C}^n)$ be such that $m(A)>0$, where
$$m(A)=\min\{\Re\langle A(z),z\rangle: \|z\|=1\}.$$
We say that $\Omega$ is {\sl spiral-shaped with respect to $A$}
if $e^{-tA}(w)\in\Omega$ whenever $w\in\Omega$ and $t\in
\R^+$. If $A=\id$ and $\Omega$ is spiral-shaped with
respect to $\id$, we say that $\Omega$ is {\sl star-shaped}.

If $f$ is a univalent mapping on $\B^n$, then $f$ is called
{\sl spiral-shaped with respect to $A$} if the image domain $\Omega={f(\B^n)}$ is
spiral-shaped with respect to $A$. In addition, if $A=\id$ and
$f$ is spiral-shaped with respect to $\id$, we say that $f$ is
{\sl star-shaped} (see \cite{ERS04}).
\end{definition}

\begin{remark}
It is clear that if $f$ is spiral-shaped with respect to $A$,
then $0\in\overline{f(\B^n)}$. Moreover, if $0\in f(\B^n)$,
then the above definition reduces to the usual definition of
spiral-likeness (respectively star-likeness) (see \cite{Gu} and
\cite{Su}).
\end{remark}

We next present some applications of Theorem \ref{LK-PDE} to
the case $M=\B^n$. The first result provides necessary and
sufficient conditions for a locally univalent mapping on
the unit ball ${\B^n}$ in $\mathbb{C}^n$ to be spiral-shaped,
and thus univalent on $\B^n$.

\begin{remark} We remark that the equivalence between the conditions (i)
and (iii) in Theorem \ref{t3.1} below was first obtained by
Elin, Reich and Shoikhet (see the proof of \cite[Proposition
3.5.2]{ERS04}; cf. \cite[Proposition 3.7.2]{ERS04}; \cite{ReSh})
by different arguments (compare \cite{ERS00}). In the case
$f(0)=0$, the analytic characterization of spiral-likeness is
due to Gurganus \cite{Gu} and Suffridge \cite{Su}.
\end{remark}

\begin{theorem}
\label{t3.1} Let $f:{\B^n}\to\mathbb{C}^n$ be a locally
univalent mapping such that $0\in \overline{f({\B^n})}$.
Also let $A\in L(\mathbb{C}^n,\mathbb{C}^n)$ be such that
$m(A)>0$. Then the following conditions are equivalent:

$(i)$ $f$ is spiral-shaped with respect to $A$;

$(ii)$ The family  $(f_t:=e^{tA}f(z))_{t\geq 0}$ is an
$L^\infty$-Loewner chain.

$(iii)$ $f$ is univalent on $\B^n$ and
\begin{equation}
\label{spiral}
\Re\langle (df_z)^{-1}Af(z),z\rangle\geq (1-\|z\|^2)\Re\langle (df_0)^{-1}Af(0),z\rangle,
\quad z\in {\B^n}.
\end{equation}
\end{theorem}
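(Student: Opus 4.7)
I would establish the cyclic chain $(i)\Rightarrow(ii)\Rightarrow(iii)\Rightarrow(i)$, using the Loewner--Kufarev machinery of Section \ref{PDE} as the bridge. The candidate chain throughout is $(f_t:=e^{tA}f)_{t\geq 0}$, and the candidate (necessarily autonomous) Herglotz vector field is
\[
G(z):=-(df)_z^{-1}Af(z),
\]
which is well defined and holomorphic on $\B^n$ because $f$ is locally univalent and $A$ is invertible (since $m(A)>0$).

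For $(i)\Leftrightarrow(ii)$, I would argue directly from the definitions. Each $f_t=e^{tA}\circ f$ is univalent exactly when $f$ is, because $e^{tA}$ is a linear automorphism of $\C^n$. The inclusion $f_s(\B^n)\subset f_t(\B^n)$ for $s\leq t$ is equivalent, after multiplication by $e^{-tA}$, to $e^{-(t-s)A}(f(\B^n))\subset f(\B^n)$, which is exactly spiral-shapedness with parameter $t-s\geq 0$. The $L^\infty$-regularity is automatic: smoothness of $t\mapsto e^{tA}$ gives $\|f_t(z)-f_s(z)\|\leq \|e^{tA}-e^{sA}\|\,\|f(z)\|\leq C_{K,T}|t-s|$ uniformly on compact subsets $K\subset\B^n$ and $t,s\in[0,T]$.

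For $(ii)\Rightarrow(iii)$, I would apply Corollary \ref{corollaryLK-PDE}: the $L^\infty$-Loewner chain $(f_t)$ solves the Loewner--Kufarev PDE $\partial_s f_s(z)=-(df_s)_z G(z,s)$ for some $L^\infty$-Herglotz vector field $G$. Substituting $\partial_t f_t(z)=Ae^{tA}f(z)$ and $(df_t)_z=e^{tA}(df)_z$ identifies $G(z,t)$ as the autonomous field $-(df)_z^{-1}Af(z)$. The standard characterization of semi-complete (equivalently, Herglotz) holomorphic vector fields on the ball (as in \cite{BCD10}) says that such a $G$ is Herglotz if and only if
\[
\Re\langle G(z),z\rangle \leq (1-\|z\|^2)\Re\langle G(0),z\rangle,\quad z\in\B^n,
\]
and substituting the explicit formula for $G$ transforms this into (\ref{spiral}). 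Univalence of $f=f_0$ is built into the definition of an $L^\infty$-Loewner chain.

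For $(iii)\Rightarrow(i)$, I would reverse the computation. Since $f$ is univalent, $G$ is holomorphic on $\B^n$, and the inequality (\ref{spiral}) is precisely the semi-completeness condition above, so $G$ is an $L^\infty$-Herglotz vector field. Theorem \ref{prel-thm} then yields an associated $L^\infty$-evolution family $(\varphi_{s,t})$ with $\partial_t\varphi_{s,t}=G(\varphi_{s,t})$. The key computation is
\[
\frac{d}{dt}\bigl(e^{tA}f(\varphi_{s,t}(z))\bigr)=Ae^{tA}f(\varphi_{s,t})+e^{tA}(df)_{\varphi_{s,t}}G(\varphi_{s,t})=Ae^{tA}f(\varphi_{s,t})-e^{tA}Af(\varphi_{s,t})=0,
\]
so $e^{tA}f(\varphi_{s,t}(z))$ is constant in $t$ and equals $e^{sA}f(z)$ at $t=s$; hence $f(\varphi_{s,t}(z))=e^{-(t-s)A}f(z)$. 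Taking $s=0$ and using $\varphi_{0,t}(\B^n)\subset\B^n$ yields $e^{-tA}(f(\B^n))=f(\varphi_{0,t}(\B^n))\subset f(\B^n)$ for every $t\geq 0$, which together with the univalence of $f$ is exactly spiral-shapedness with respect to $A$. The main obstacle is the translation between the abstract Kobayashi-pseudometric Herglotz condition of Definition \ref{Her-vec-man} and the concrete pointwise inequality on $\B^n$; once this classical characterization is invoked, the rest of the argument is a formal ODE computation.
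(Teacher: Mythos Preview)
Your proof is correct and follows essentially the same route as the paper: both identify the candidate autonomous field $G(z)=-(df)_z^{-1}Af(z)$, use Corollary~\ref{corollaryLK-PDE} to extract it from the chain for $(ii)\Rightarrow(iii)$, and translate the Herglotz/semi-completeness condition on $\B^n$ into the pointwise inequality~(\ref{spiral}). The only cosmetic difference is in closing the cycle: the paper proves $(iii)\Rightarrow(ii)$ by invoking Theorem~\ref{LK-PDE} as a black box (and cites \cite{ERS04} rather than \cite{BCD10} for the pointwise characterization of semi-complete fields on the ball), whereas you prove $(iii)\Rightarrow(i)$ by carrying out explicitly the constancy computation $\frac{d}{dt}\bigl(e^{tA}f(\varphi_{0,t})\bigr)=0$---which is exactly the argument inside the converse direction of Theorem~\ref{LK-PDE} specialized to this autonomous $G$.
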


\begin{proof}
The equivalence between the conditions (i) and (ii) is
immediate. Now, we assume that the condition (ii) holds. Then
$f$ is univalent on ${\B^n}$. Let $G(z,t)$ be the Herglotz
vector field of order $\infty$ given by Corollary \ref{corollaryLK-PDE}. A direct computation from \eqref{LK-PDE} implies
\begin{equation}
\label{4.50}
G(z,t)=-(df_z)^{-1}Af(z),\quad t\geq 0,\quad z\in {\B^n}.
\end{equation}
Since by the very definition a Herglotz vector field is a
semicomplete vector field for almost every $t\geq 0$, it
follows that $-(df_z)^{-1}Af(z)$ is semicomplete. Hence, by
\cite[Proposition 3.5.2]{ERS04} (where the sign convention is
different from the one adopted here), we deduce the relation
(\ref{spiral}), as claimed.

Conversely, assume that the condition (iii) holds. Clearly $(f_t)$ is a
family of univalent mappings on $\B^n$ such that the mapping $t\mapsto f_t(z)$
is of class $C^\infty$ on $\R^+$ for all $z\in \B^n$. Also $(f_t)$ satisfies
the differential equation
\begin{equation}
\label{4.49}
\frac{\partial f_t}{\partial t}(z)=-(df_t)_zG(z,t),\quad \mbox{ a.e. }\quad
t\geq 0,\quad \forall\ z\in {\B^n},
\end{equation}
where $G(z,t)$ is given by \eqref{4.50}. In view of the
condition (\ref{spiral}) and \cite[Lemma 3.3.2]{ERS04}, we
deduce that the mapping $G(z,t)$ is a semicomplete vector field
for all $t\geq 0$, and thus it is a Herglotz vector field of
order $\infty$ by \cite[Theorem 0.2]{BCD10}. Hence $(f_t)$ is
an $L^\infty$-Loewner chain by Theorem \ref{LK-PDE}. This
completes the proof.
\end{proof}

We next give the following analytic characterization of
star-shapedness on the unit ball $\B^n$ (cf. \cite{ERS04}). In
the case $f(0)=0$, the inequality in the third statement
becomes the well known analytic characterization of
star-likeness for locally univalent mappings on $\B^n$
(see \cite{Go}, \cite{GK03}, \cite{Su}  and the
references therein).  Necessary and sufficient conditions for star-likeness with respect to a
boundary point are given in \cite{Li-St}.

\begin{corollary}
\label{c-star} Let $f:{\B^n}\to\mathbb{C}^n$ be a locally
univalent mapping such that $0\in \overline{f({\B^n})}$.
Then the following conditions are equivalent:

$(i)$ $f$ is star-shaped;

$(ii)$ The family  $(f_t:=e^tf(z))_{t\geq 0}$ is an
$L^\infty$-Loewner chain.

$(iii)$ $f$ is univalent on ${\B^n}$ and
$$\Re\langle (df_z)^{-1}f(z),z\rangle\geq (1-\|z\|^2)\Re\langle (df_0)^{-1}f(0),z\rangle,
\quad z\in {\B^n}.$$
\end{corollary}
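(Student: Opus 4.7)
The plan is to obtain Corollary \ref{c-star} as an immediate specialization of Theorem \ref{t3.1} to the operator $A=\id$. So the strategy is simply to check that all three conditions of the corollary arise from the corresponding conditions in the theorem once this substitution is made.

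First I would verify that $A=\id$ is admissible, i.e.\ that $m(\id)>0$: by definition, $m(\id)=\min\{\Re\la z,z\ra : \|z\|=1\}=1$, which is positive. Next, by Definition \ref{d3.1}, ``spiral-shaped with respect to $\id$'' is literally the definition of ``star-shaped,'' so condition (i) of Theorem \ref{t3.1} becomes condition (i) of the corollary. For condition (ii), note that $e^{tA}=e^{t}\id$ when $A=\id$, so the family $(e^{tA}f(z))$ from the theorem is exactly $(e^{t}f(z))$ as in the corollary; hence condition (ii) of the theorem transcribes verbatim to condition (ii) of the corollary.

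Finally, for condition (iii), the analytic inequality \eqref{spiral} in Theorem \ref{t3.1} reads
\[
\Re\la (df_z)^{-1}Af(z),z\ra \geq (1-\|z\|^2)\Re\la (df_0)^{-1}Af(0),z\ra,\quad z\in\B^n,
\]
and substituting $A=\id$ gives precisely the inequality stated in (iii) of the corollary. Together with the requirement that $f$ be univalent on $\B^n$, this reproduces condition (iii) of the corollary.

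Since the argument is only a substitution $A=\id$ into an already proved statement, no genuine obstacle arises. The only small point to mention is the appeal to Definition \ref{d3.1} to identify the star-shaped case as the spiral-shaped case for $A=\id$; this is purely notational. Thus the proof of Corollary \ref{c-star} consists of a single sentence invoking Theorem \ref{t3.1} with $A=\id$.
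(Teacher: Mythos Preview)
Your proposal is correct and matches the paper's approach: the paper states Corollary \ref{c-star} without proof, relying on the evident specialization of Theorem \ref{t3.1} to $A=\id$, which is exactly what you do.
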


\begin{corollary}
\label{c2} Let $f:\D\to\mathbb{C}$ be a star-shaped function on
$\D$ such that $|\arg f'(0)|<\pi/2$ and
$|\arg(f'(0)/f'(f^{-1}(\lambda f(0)))|<\pi/2$ for $\lambda\in
(0,1]$. Also let $F=\Phi_n(f)$. Then $F$ is also star-shaped on
$\B^n$.
\end{corollary}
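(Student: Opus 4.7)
The plan is to combine Corollary \ref{c1} (which embeds $\Phi_n(f)$ into an $L^d$-Loewner chain on $\B^n$) with the characterization of star-shapedness in Corollary \ref{c-star}.

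First I would invoke Corollary \ref{c-star} applied to $f$: since $f$ is star-shaped on $\D$, the family $f_t(z):=e^t f(z)$ is an $L^\infty$-Loewner chain on $\D$. I then verify that this chain satisfies the hypotheses of Corollary \ref{c1}. Concretely, $f_t'(0)=e^t f'(0)$, so $\arg f_t'(0)=\arg f'(0)$ and the first hypothesis is inherited from the assumption $|\arg f'(0)|<\pi/2$. For the second hypothesis, I would compute $f_t^{-1}(w)=f^{-1}(e^{-t}w)$, hence for $t\ge s\ge 0$
\[
\frac{f_s'(0)}{f_t'(f_t^{-1}(f_s(0)))}=\frac{e^s f'(0)}{e^t f'(f^{-1}(\lambda f(0)))}=\lambda\cdot\frac{f'(0)}{f'(f^{-1}(\lambda f(0)))},
\]
with $\lambda:=e^{s-t}\in(0,1]$. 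Since $\lambda>0$, the argument is unchanged, and the hypothesis on $f$ gives $|\arg(f_s'(0)/f_t'(f_t^{-1}(f_s(0))))|<\pi/2$.

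Next, I would apply Corollary \ref{c1} (equivalently, Theorem \ref{t1}) with $d=\infty$ to produce an $L^\infty$-Loewner chain on $\B^n$ given by formula (\ref{1}):
\[
F_t(z)=\Bigl(f_t(z_1),\,\tilde z e^{t/2}\sqrt{f_t'(z_1)}\Bigr)=\Bigl(e^t f(z_1),\,\tilde z e^{t/2}\sqrt{e^t f'(z_1)}\Bigr)=e^t F(z),
\]
where in the last equality I use $F=\Phi_n(f)=(f(z_1),\tilde z\sqrt{f'(z_1)})$. Hence $F_t(z)=e^t F(z)$ is an $L^\infty$-Loewner chain on $\B^n$.

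Finally, I would apply the implication (ii)$\Rightarrow$(i) of Corollary \ref{c-star} with $A=\id$: the fact that $(e^t F(z))_{t\ge 0}$ is an $L^\infty$-Loewner chain means exactly that $F$ is star-shaped on $\B^n$, which is the desired conclusion. There is no real obstacle here; the only point that requires care is the straightforward algebraic verification that the square-root factor in (\ref{1}) combines correctly so that $F_t=e^t F$, and that the argument-hypothesis of Corollary \ref{c1} survives the rescaling $f_t=e^t f$, both of which are routine.
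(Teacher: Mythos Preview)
Your proof is correct and follows essentially the same route as the paper: apply Corollary~\ref{c-star} to get $f_t=e^t f$ as an $L^\infty$-Loewner chain on $\D$, feed this into Theorem~\ref{t1} (after checking its argument hypotheses, which you do more explicitly than the paper), compute $F_t=e^t F$, and invoke Corollary~\ref{c-star} again. The only small omission is that Corollary~\ref{c-star} requires $0\in\overline{F(\B^n)}$ as a standing hypothesis, which the paper notes explicitly; you should add one line observing that this follows from $0\in\overline{f(\D)}$ (since $F(z_1,0)=(f(z_1),0)$).
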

\begin{proof}
Since $f$ is star-shaped, it follows that $f_t(z_1)=e^tf(z_1)$
is an $L^\infty$-Loewner chain by Corollary \ref{c-star}.
Let $(F_t)$ be the chain given by (\ref{1}).
In view of Theorem \ref{t1}, $(F_t)$ is an
$L^\infty$-Loewner chain on $\B^n$.
Moreover, since $0\in \overline{F(\B^n)}$
and $F_t(z)=e^tF(z)$, we deduce that the mapping $F=F_0$ is
star-shaped on $\B^n$, by Corollary \ref{c-star}.
This completes the proof.
\end{proof}

\end{document}